\newcommand{\leqnomode}{\tagsleft@true\let\veqno\@@leqno}
\newcommand{\reqnomode}{\tagsleft@false\let\veqno\@@eqno}
\DeclareMathOperator*{\du}{d\!}
\DeclareMathOperator*{\Div}{div}
\DeclareMathOperator{\bu}{\boldsymbol{u}}
\DeclareMathOperator{\blf}{\boldsymbol{f}}
\DeclareMathOperator{\bw}{\boldsymbol{w}}
\DeclareMathOperator{\bv}{\boldsymbol{v}}
\DeclareMathOperator{\bV}{\mathbf{V}}
\DeclareMathOperator{\bH}{\mathbf{H}}
\DeclareMathOperator{\bW}{\mathbf{W}}
\DeclareMathOperator{\bTh}{\mathbf{\Theta}}
\DeclareMathOperator{\bth}{\boldsymbol{\theta}}
\DeclareMathOperator{\bn}{\boldsymbol{n}}
\DeclareMathOperator{\bN}{\boldsymbol{N}}
\DeclareMathOperator{\bg}{\boldsymbol{g}}
\DeclareMathOperator{\bphi}{\boldsymbol{\varphi}}
\DeclareMathOperator{\bpsi}{\boldsymbol{\psi}}
\DeclareMathOperator{\btau}{\boldsymbol{\tau}}
\DeclareMathOperator{\dive}{\mathrm{div}}
\DeclareMathOperator{\Oad}{\mathcal{O}_{ad}}
\DeclareMathOperator{\ws}{\mathrel{\ensurestackMath{\stackon[1pt]{\rightharpoonup}{\scriptstyle\ast}}}}
\theoremstyle{plain}
\newtheorem{thrm}{Theorem}[section]
\newtheorem{lmm}[thrm]{Lemma}
\newtheorem{prpstn}[thrm]{Proposition}
\theoremstyle{definition}
\newtheorem{dfntn}[thrm]{Definition}
\newtheorem{rmrk}[thrm]{Remark}
\newenvironment{pethau}%
{\begin{list}{}%
    {%
      \setlength{\itemindent}{-10pt}%
      \setlength{\leftmargin}{20pt}%
      \setlength{\labelwidth}{.3\normalparindent}%
      \addtolength{\topsep}{-0.5\parskip}%
      \listparindent \normalparindent
      \setlength{\parsep}{\parskip}}}%
  {\end{list}}
\numberwithin{equation}{section}
\begin{document}

\title[Maximizing Vortex for the Navier--Stokes Flow]{Maximizing Vortex for the Navier--Stokes Flow with a Convective Boundary Condition: A Shape Design Problem}\thanks{This work is supported by JSPS KAKENHI Grant Numbers JP18H01135, JP20H01823, JP20KK0058 and JP21H04431, and JST CREST Grant Number JPMJCR2014 for {\bf HN}; and by the Japanese Government (MEXT) Scholarship for {\bf JSS}.}
\author{John Sebastian H. Simon}\address{Division of Mathematical and Physical Sciences, 
              Graduate School of Natural Science and Technology, 
              Kanazawa University, Kanazawa 920-1192, Japan; \texttt{john.simon@stu.kanazawa-u.ac.jp} }
\author{Hirofumi Notsu}\address{Faculty of Mathematics and Physics, Kanazawa University, Kanazawa 920-1192, Japan; \texttt{notsu@se.kanazawa-u.ac.jp}}
%
%
\begin{abstract}
In this study, a shape optimization problem for the two-dimensional stationary Navier--Stokes equations with an artificial boundary condition is considered. The fluid is assumed to be flowing through a rectangular channel, and the artificial boundary condition is formulated so as to take into account the possibility of ill-posedness caused by the usual do-nothing boundary condition.  The goal of the optimization problem is to maximize the vorticity of the said fluid by determining the shape of an obstacle inside the channel. Meanwhile, the shape variation is limited by a perimeter functional and a volume constraint.  The perimeter functional was considered to act as a Tikhonov regularizer and the volume constraint is added to exempt us from topological changes in the domain. The shape derivative of the objective functional was formulated using the rearrangement method,  and this derivative was later on used for gradient descent methods. Additionally, an augmented Lagrangian method and a class of solenoidal deformation fields were considered to take into account the goal of volume preservation.  Lastly, numerical examples based on the gradient descent and the volume preservation methods are presented.
\end{abstract}
%
%
\subjclass[2020]{35Q30,49Q10, 49J20, 49K20}
\keywords{Navier--Stokes equations, convective boundary condition, shape optimization,  rearrangment method}
\maketitle
\section*{Introduction}

Partial differential equation (PDE) constrained optimization is among the most active fields in mathematics, mainly due to its applicability in engineering and physics.  In particular, the system of the Navier--Stokes equations is the theme of a lot of papers because of its ability to closely mimic physical fluid flow. From this reason, several authors including that of \cite{colin2010},  \cite{mihir1994}, and \cite{fursikov2000} have studied optimal control problems where the controls are designed to steer the fluid dynamics according to a given objective. 

{In this paper we are interested in an optimization problem where the control is the shape of the domain instead of functionals. Several authors have considered such problems due to its applicability for example in aeronautics\cite{mohammadi2004}.  In \cite{moubachir2006}, Moubachir, M. and Zolesio, J.-P.  wrote an extensive introductory literature for shape optimization problems which are governed by fluid flows and are mostly described through the Navier--Stokes equations, and the objectives are mostly formulated as tracking functionals.  For a more applied approach to fluid shape design problems, we refer the reader to \cite{pironneau2010}. Meanwhile, Kunisch, K. and Kasumba, H. \cite{kasumba2012} analyzed a vorticity minimization problem that steers the fluid flowing through a channel to exhibit more laminar flow. In the said literature, the authors considered a bounded domain, which compels the imposition of an input function on one end of the channel, and a do-nothing boundary condition on the other to capture the outflow. However,  due to the nonlinear nature of the Navier--Stokes equations, the outflow condition the authors considered can cause non-existence of weak solutions. For this reason, we propose an artificial boundary condition that can capture the outflow while making sure of the well-posedness of the state equations. }

We also mention that although most shape design problems involving fluid flow are formulated as to minimize vorticity or drag, there are literatures where vorticity maximization is the goal.  This type of maximization problems are mostly considered applicable, for example, in optimal mixing problems (c.f. \cite{eggl2020,mathew2007}).  Recently, Goto, K. et. al. \cite{goto2020} also showed that emergence of vortices has an interesting result in the field of information theory.

With all these,  we consider the following shape optimization problem
\begin{equation}
\min\{\mathcal{G}(\bu,\Omega): |\Omega| = m, \Omega\subset D \}, 
\end{equation}
where $D$ is a hold-all domain which is assumed to be a fixed bounded connected domain in $\mathbb{R}^2$, $\Omega\subset D$ is an open bounded domain, $\mathcal{G}(\bu,\Omega) : = J(\bu,\Omega) + \alpha P(\Omega),$ $J$ and $P$ are vortex and perimeter functionals respectively given by 
\[J(\bu,\Omega) = -\frac{\gamma}{2}\int_{\Omega}|\nabla\times \bu|^2\du x,\text{ and }P(\Omega) = \int_{\Gamma_{\rm f}}\du s,\]
where $m>0$ is a given constant, and $|\Omega|:= \int_\Omega \du x$.  Here, $\bu$ is the velocity field governed by a fluid flowing through a channel with an obstacle. The flow of the fluid is reinforced by an input function $\bg$ on the left end of the channel, which is denoted by $\Gamma_{\rm{in}}$, whilst an outflow boundary condition is imposed to the fluid on the right end of the channel denoted by $\Gamma_{\rm{out}}$. The boundary $\Gamma_{\rm f}$  is the boundary of the submerged obstacle in the fluid. The remaining boundaries of the channel are denoted by $\Gamma_{\rm{w}}$, upon which -- together with the obstacle boundary $\Gamma_{\rm f}$ -- a no-slip boundary condition is imposed on the fluid. { For the condition on $\Gamma_{\rm out}$, let us point out that the usual candidate for capturing outflow is the so-called {\it do-nothing} condition, however such condition is not sufficient to ensure the existence of solutions to the Navier--Stokes equations. Aside from the do-nothing condition, one formulation caught our attention. An artificial boundary condition called the {\it directional do-nothing} shows to be a good candidate for imposing outflow while ensuring well-posedness of the Navier--Stokes system. However, this condition generates complications for optimization problems, especially when an adjoint approach is considered.  In this paper we decided to consider a boundary condition, which we call a {\it convective boundary condition}. The said conditions will be discussed even further in the coming section.}
 
From here, when we talk about a domain $\Omega$, we always consider it having the boundary $\partial\Omega = \overline\Gamma_{\rm in}\cup\overline\Gamma_{\rm w}\cup\overline\Gamma_{\rm out}\cup\overline\Gamma_{\rm f}$, and that $\mathrm{dist}(\partial\Omega\backslash\Gamma_{\rm f},\Gamma_{\rm f}) > 0$.
\begin{figure}[h!]
 \centering
  \includegraphics[width=.5\textwidth]{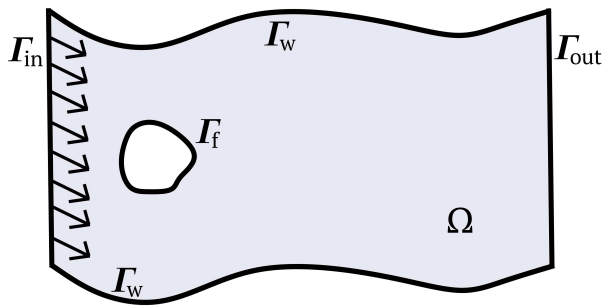}\vspace{-.1in}
 \caption{Set up of the fluid flow.}
\label{fig1}
 \end{figure}


{Due to the non-convexity of the vorticity functional $J$, possible non-existence of an optimal solution is a dilemma. Nevertheless, we shall show that $J$ is sequentially continuous with respect to the state variables. Combine this with the continuity of the map $\Omega\mapsto{\bu}$ with respect to some topology in the set of admissible domains, we circumvent the issue of existence of optimal shapes. We also propose a Tikhonov regularization in the form of the perimeter functional and a volume constraint to handle possible topological changes\footnote{Since our numerical treatment will be based on finite triangular elements and the shape deformations are realized using node/mesh movement, topological changes may cause some boundary elements to intersect, which is a problem that our current method cannot handle.}.}

To numerically solve the optimization problem, we shall utilize a gradient descent method based on the first order Eulerian derivative of the shape functional $\mathcal{G}$. Although there are several methods to determine the derivative with respect to shape variations - see for example \cite{gao2008} where the authors utilized minimax formulation and chain rule based on the Piola transform (see also \cite{schmidt2010} for computation of shape derivatives to general functionals) - the formulation of the shape derivative will be aided by the so-called rearrangement method which was formalized by Ito, K., Kunisch, K. and Peichl, G. \cite{ito2008}.  However, this computation will not take into account the volume constraint. This compels us to utilize two methods: first is the augmented Lagrangian method based on \cite[Framework17.3]{nocedal2006} which was applied to shape optimization problem in \cite{dapogny2018}; and the other one is by using solenoidal deformation fields.  Comparing the solutions of the two methods, we recognize a common profile - although the shapes are virtually different - which is a bean-shaped obstacle.

This paper is organized as follows: in the next section, we introduce the governing state equations, and its corresponding variational formulation.  In Section \ref{sect3} we establish the existence of optimal shapes, where we shall use the $L^\infty$-topology on the set of characteristic functions. The sensitivity of the objective functional will be analyzed in Section \ref{sect4}. Section \ref{sect5} is dedicated to the discussion of the numerical treatment by which we shall make use of the derivative formulated in Section \ref{sect4}. In this section, we shall also show the convergence of numerical solutions to a manufactured {\it exact} solution with respect to the Hausdorff measure, and the $H^1$ and $L^2$ norms.  Concluding remarks and possible future works will be discussed in Section \ref{sect6}.

\section{Preliminaries}\label{sect2}
\subsection{Governing equations and necessary function spaces}
Let $\Omega\subset D$ be an open and bounded domain, the motion of the fluid is described by the velocity $\bu$ and the pressure $p$ which satisfy the stationary Navier--Stokes equations given by:
\begin{align}
	\left\{
	\begin{aligned}
	\, - \nu\Delta\bu +(\bu\cdot\nabla) \bu +  \nabla p
	&= \blf && \text{ in } \Omega,\\	
	\, \Div\bu &= 0 && \text{ in } \Omega,\\ 		
	\, \bu & = \bg && \text{ on }\Gamma_{\rm in}\\
	\, \bu &= 0 && \text{ on } \Gamma_{\rm{f}}\cup\Gamma_{\rm w}.\\
	\end{aligned} \right.
	\label{state}\tag{SE}
\end{align}
Here, $\bg$ is a divergence-free input function acting on the boundary $\Gamma_{\rm in}$.
The condition $\bu= 0$ on $\Gamma_{\rm{f}}\cup \Gamma_{\rm w}$ corresponds to no-slip boundary condition. For boundary $\Gamma_{\rm out}$, we choose an appropriate condition that will correspond to an outflow condition and that will give us a good energy estimate that is crucial in showing the existence of the velocity $\bu$ and for showing continuity with respect to domain variations.

Usually, the condition imposed to capture fluid outflow is what we call the {\it do-nothing} boundary condition. Mathematically, this condition is written by letting the product of the stress tensor and the vector normal to the boundary $\Gamma_{\rm out}$ equal to zero, i.e.,
\begin{equation} 
-p\bn + \nu \partial_{\bn\!}\bu = 0,
\label{donothing}
\tag{BC0}
\end{equation}
where $\partial_{\bn\!}\bu := (\nabla\bu)\bn$, and $\bn$ is the outward unit vector normal to $\Gamma_{\rm out}$. This condition has been considered as the outflow character in a lot of inquiries including that of Gresho, P.M. in \cite{gresho1991}, among others.

This profile, however, does not ensure the possibility of obtaining a good energy estimate, let alone the existence of a weak solution. As such, several alternative weak formulations has been formulated to address this issue (e.g. \cite{bruneau1996,heywood1992,zhou2016}). To illustrate the severity of the issue, we note that to solve the existence of a weak solution for the Navier--Stokes equations coupled with the do-nothing boundary condition the computations will give us the following expression
\begin{align*}
	\int_{\Gamma_{\rm out}} (\bu\cdot\bn)|\bv|^2\du s.
\end{align*}
Now, if $\bu$ and $\bv$ are identical, which will be a case when trying to establish the existence, specifically coercivity of the trilinear form with respect to the weak formulation, it is imperative to have a knowledge on the sign of the above quantity. This will be challenging due to its cubic form. This problem is addressed and circumvented by Bruneau, C.-H., and Fabrie, P. in \cite{bruneau1996} by introducing several versions of artificial boundary conditions for the Neumann condition. In this paper, we shall utilize one of the conditions introduced in the said literature, as well as establish its well-posedness.

We consider the Sobolev spaces $W^{k,p}(D)$ for any domain $D\subset\mathbb{R}^2$, $k\ge 0$ and $p\ge 1$. Note that if $p=2$, then $H^k(D)=W^{k,2}(D)$. 
For any domain $\Omega\subset\mathbb{R}^2$, letting $\Gamma_0 := \partial \Omega\backslash \Gamma_{\rm out} $we define 
\[\mathcal{W}(\Omega) = \{\boldsymbol{\varphi} \in C^\infty(\Omega)^2 : \boldsymbol\varphi=0\text{ on a neighborhood of }\Gamma_0 \}.\]
We denote by ${\bH}_{\Gamma_{0}}^r(\Omega)$ the closure of $\mathcal{W}(\Omega)$ with respect to the norm in $H^r(\Omega)^2$. By utilizing Meyers-Serrin type arguments it can be easily shown that 
\[ {\bH}_{\Gamma_{0}}^r(\Omega) = \{{\bphi}\in H^r(\Omega)^2: {\bphi}=0 \text{ on }\Gamma_0 \}.\]
To deal with the incompressibility condition, we shall take into account the following solenoidal spaces
\begin{align*}
	\bW(\Omega) &= \{ {\bphi}\in \mathcal{W}(\Omega): \Div{\bphi}=0\text{ in }\Omega \},\\
	\bV(\Omega) & = \{{\bphi}\in{\bH}_{\Gamma_{0}}^1(\Omega) : \Div{\bphi}=0 \text{ in }\Omega  \},\\
	\bH(\Omega) & = \{{\bphi}\in L^2(\Omega)^2 : \Div{\bphi}=0 \text{ in }L^2(\Omega), {\bphi}\cdot{\bn}=0\text{ on }\Gamma_0 \}.
\end{align*}
The space $\bW(\Omega)$ is dense in $\bV(\Omega)$, and the spaces $\bV(\Omega)$ and $\bH(\Omega)$ satisfy the Galfand triple property, i.e., $\bV(\Omega)\hookrightarrow \bH(\Omega) \hookrightarrow \bV(\Omega)^*$, where the first imbedding is dense, continuous and compact. Lastly, for simplicity of notations, we shall denote by $(\cdot,\cdot)_{\mathcal{D}}$ the $L^2(\mathcal{D})$, $L^2(\mathcal{D})^2$, or $L^2(\mathcal{D})^{2\times2}$ inner products, where $\mathcal{D}$ is any measurable set.

\subsection{Weak formulation and existence of solutions}
Before we begin, let us first settle the issue on the regularity of the domain.  Although the assumption that $\Omega$ is of class $\mathcal{C}^{0,1}$ is sufficient to ensure the existence of weak solution to the Navier--Stokes equations, which we shall show later, this resulting solution will be established to be at most first differentiable. With this reason,  if we want a higher regularity for the solution then additional regularity on the domain is needed to be imposed. So from here on out, we shall say that the domain $\Omega$ satisfies (H$_{\Omega}$) if either of the following assumptions is satisfied:
\begin{align}
	\left\{
	\begin{aligned}
	& \bullet \text{ $\Omega$ is of class $\mathcal{C}^{1,1}$; or	}\\
	& \bullet \text{ the outer surface is a boundary of a convex polygon and the inner surface is of class $\mathcal{C}^{1,1}$.}
	\end{aligned}
	\right.
	\label{domainassumption}
	\tag{H$_{\Omega}$}
\end{align}

The reason for mentioning the above regularity assumptions on $\Omega$ is twofold: one -- as referred to before -- is for the regularity of the weak solution; and the other reason is to ensure that the extension of the input function ${\bg}$ that satisfies the properties below exists
\begin{align}
	\left\{
	\begin{aligned}
	&\Div \bg = 0\text{ in }\Omega;\quad \bg = 0\text{ on }\Gamma_{\rm f}\cup\Gamma_{\rm wall};\text{ and}\\
	&\int_{\Gamma_{\rm out}}\bg\cdot\bn \du s = -\int_{\Gamma_{\rm in}}\bg\cdot\bn \du s \ge 0.
	\end{aligned}
		\right.\label{gprop}
\end{align}
Of course, if we assume the minimum regularity on the domain, i.e.,  $\Omega$ is of class $\mathcal{C}^{0,1}$, then we can easily show, by the virtue of  \cite[Lemma 2.2]{girault1986}, that the extension of the input function ${\bg}\in H^{1/2}(\Gamma_{\rm in})^2$ that satisfies \eqref{gprop} exists and is in $H^1(\Omega)^2$.  However, this is not enough if we would wish to have higher regularity on the extension. To be precise, if we wish to extend ${\bg}\in H^{3/2}(\Gamma_{\rm in})^2$ in $\Omega$ that satisfies \eqref{gprop} and that the extension is in $H^2(\Omega)^2$, then the minimum regularity is for the domain to satisfy \eqref{domainassumption}. To simplify things later, we shall use the same notation for the input function and its extension. To be precise, when we refer to the assumption that ${\bg}\in H^m(\Omega)^2$ and satisfies \eqref{gprop}, we are implicitly saying that the input function is in $H^{m - 1/2}(\Gamma_{\rm in})^2$.

By letting $\tilde\bu = \bu - \bg$, we consider the following variational problem: For a given domain $\Omega$, find $\tilde\bu\in \bV(\Omega)$ such that
\begin{align}
	\mathbb{B}(\tilde{\bu},{\bphi}) := \mathbb{A}(\tilde{\bu};\tilde{\bu},{\bphi}) - \frac{1}{2}\mathbb{C}(\tilde{\bu};\tilde{\bu},{\bphi}) = \langle \Phi, {\bphi}\rangle_{\bV(\Omega)^*\times\bV(\Omega)},\quad \forall{\bphi}\in{\bV}(\Omega),
	\label{weak}
\end{align}
where $\mathbb{A}:{\bV}(\Omega)\times{\bV}(\Omega)\times{\bV}(\Omega)\to\mathbb{R}$ and $\mathbb{C}:{\bV}(\Omega)\times{\bV}(\Omega)\times{\bV}(\Omega)\to\mathbb{R}$ are trilinear forms respectively defined as follows:
\begin{align*}
\mathbb{A}({\bw};{\bu},{\bv}) &= \nu a({\bu},{\bv})_{\Omega} + b({\bw};{\bu},{\bv})_\Omega + b({\bg};{\bu},{\bv})_\Omega + b({\bw};{\bg},{\bv})_\Omega,\\ 
\mathbb{C}({\bw};{\bu},{\bv}) &=  ({\bw}\cdot{\bn},{\bu}\cdot{\bv})_{\Gamma_{\rm out}} +  ({\bg}\cdot{\bn},{\bu}\cdot{\bv})_{\Gamma_{\rm out}} +  ({\bw}\cdot{\bn},{\bg}\cdot{\bv})_{\Gamma_{\rm out}},
\end{align*} 
where the members of $\mathbb{A}$ are defined below:
\begin{enumerate}
\item[(i)] $a(\cdot,\cdot)_{\Omega}:\bV(\Omega)\times\bV(\Omega)\to\mathbb{R}$ is a bilinear form defined by
\[a(\bu,\bv)_{\Omega}=\int_{\Omega}\nabla\bu:\nabla\bv\du x,\] 
\item[(ii)] $b(\cdot;\cdot,\cdot)_\Omega: \bV(\Omega)\times\bV(\Omega)\times\bV(\Omega)\to\mathbb{R}$ is a trilinear form given as
\[b(\bw;\bu,\bv)_\Omega = \int_\Omega [(\bw\cdot\nabla)\bu]\cdot\bv \du x,\]
\end{enumerate}
and the action of $\Phi\in \bV(\Omega)^*$ is defined as
\begin{align*}
	\langle \Phi, {\bphi}\rangle_{\bV(\Omega)^*\times\bV(\Omega)} = ({\blf}-({\bg}\cdot\nabla){\bg},{\bphi})_{\Omega} - \nu(\nabla{\bg},\nabla{\bphi})_{\Omega} + \frac{1}{2}({\bg}\cdot{\bn},{\bg}\cdot{\bphi})_{\Gamma_{\rm out}},\quad \forall{\bphi}\in{\bV}(\Omega).
\end{align*}
Note that the variational equation \eqref{weak} is achieved from assuming that ${\bu}$ satisfies the boundary condition
\begin{align}
-p{\bn} + \nu\partial_{\bn}{\bu} - \frac{1}{2}({\bu}\cdot{\bn}){\bu} = 0\quad\text{on }\Gamma_{\rm out}. 
\label{conboundcond}
\tag{BC1}
\end{align}
We further mention that this boundary equation is among the artificial conditions proposed in \cite{bruneau1996} for $\Theta(a) = a$. As far as we are aware, this condition has never been analyzed, let alone be considered in an optimization problem.  The problem arises in -- as we have mentioned before -- showing the coercivity of the operators of the left hand side of the variational equation and due to the non-homogeneous Dirichlet data on $\Gamma_{\rm in}$. More obvious formulations have been considered by several authors (see \cite{braack2014}, \cite{zhou2016}) by considering the nonlinear part to be $({\bu}\cdot{\bn})_{-}{\bu}$, i.e.,
\begin{align}
-p{\bn} + \nu\partial_{\bn}{\bu} - \frac{1}{2}({\bu}\cdot{\bn})_{-}{\bu} = 0\quad\text{on }\Gamma_{\rm out},
\label{dirdonothing}
\tag{BC2}
\end{align}
where the notation $(\cdot)_{-}$ is defined as 
\begin{align*}
	(h)_{-} = \left\{\begin{aligned} 0 &&\text{if }h\ge0,\\ h &&\text{if } h<0.\end{aligned}\right.
\end{align*} 
This will {\it automatically} give a coercive left hand side, and hence the well-posedness of the state equations. However, this formulation will not be easy to handle when a variational approach for the optimization problem is utilized, and in the numerical treatment of the adjoint problem, to be specific.

Fortunately, we shall prove an analogous  result to that of \cite[Lemma IV.2.3]{girault1986} but takes into account the boundary integral on $\Gamma_{\rm out}$ that will help to render the variational problem \eqref{weak} well-posed. The mentioned analogy is shown in Lemma \ref{midg} and is proven in the Appendix.

We start the analysis by showing - under appropriate conditions - that $\Phi\in{\bV}(\Omega)^*$.
\begin{prpstn}
Let $\Omega\subset\mathbb{R}^2$ be of class $\mathcal{C}^{0,1}$, ${\blf}\in L^2(\Omega)^2$ and ${\bg}\in H^1(\Omega)^2$ satisfy \eqref{gprop}. Then, $\Phi\in{\bV}(\Omega)^*$ and that 
\begin{align}
	\|\Phi\|_{\bV(\Omega)^*} \le c(\|{\blf}\|_{L^2(\Omega)^2} + (1+\|{\bg}\|_{H^1(\Omega)^2})\|{\bg}\|_{H^1(\Omega)^2}),
\end{align}
for some constant ${c}>0.$
\label{Phicont}
\end{prpstn}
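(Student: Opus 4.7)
The plan is to estimate the action of $\Phi$ on an arbitrary $\bphi \in \bV(\Omega)$ by bounding each of the four terms in the definition separately, then assembling the estimates and taking the supremum over $\bphi$ with $\|\bphi\|_{H^1(\Omega)^2} \le 1$. Since each term is linear in $\bphi$, linearity of $\Phi$ is clear; the content is the continuity estimate.

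First I would handle the interior terms. For $(\blf,\bphi)_\Omega$ a direct Cauchy–Schwarz (followed by Poincaré, since $\bphi$ vanishes on $\Gamma_0$, to pass from $\|\bphi\|_{L^2}$ to $\|\bphi\|_{H^1}$) yields a bound of order $\|\blf\|_{L^2(\Omega)^2}\|\bphi\|_{H^1(\Omega)^2}$. For $\nu(\nabla\bg,\nabla\bphi)_\Omega$, Cauchy–Schwarz gives the bound $\nu\|\bg\|_{H^1}\|\bphi\|_{H^1}$ directly. For the convective term $((\bg\cdot\nabla)\bg,\bphi)_\Omega$ I would use the two-dimensional Sobolev embedding $H^1(\Omega)\hookrightarrow L^4(\Omega)$ and Hölder with exponents $(4,2,4)$:
\begin{equation*}
\bigl|((\bg\cdot\nabla)\bg,\bphi)_\Omega\bigr| \le \|\bg\|_{L^4(\Omega)^2}\|\nabla\bg\|_{L^2(\Omega)^{2\times 2}}\|\bphi\|_{L^4(\Omega)^2} \le c\,\|\bg\|_{H^1(\Omega)^2}^2\,\|\bphi\|_{H^1(\Omega)^2}.
\end{equation*}

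The boundary term is the only mildly delicate step. I would argue that the trace operator maps $H^1(\Omega)$ continuously into $H^{1/2}(\Gamma_{\rm out})$, and that $H^{1/2}$ on a one-dimensional Lipschitz boundary embeds into $L^q$ for every $q<\infty$; in particular, $\|\bg\|_{L^4(\Gamma_{\rm out})^2} \le c\|\bg\|_{H^1(\Omega)^2}$ and similarly for $\bphi$. With $|\bn|=1$ a.e., Hölder with exponents $(4,4,2)$ gives
\begin{equation*}
\bigl|(\bg\cdot\bn,\bg\cdot\bphi)_{\Gamma_{\rm out}}\bigr| \le \|\bg\|_{L^4(\Gamma_{\rm out})^2}^2\,\|\bphi\|_{L^2(\Gamma_{\rm out})^2} \le c\,\|\bg\|_{H^1(\Omega)^2}^2\,\|\bphi\|_{H^1(\Omega)^2}.
\end{equation*}

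Adding the four estimates produces
\begin{equation*}
|\langle\Phi,\bphi\rangle| \le c\bigl(\|\blf\|_{L^2(\Omega)^2} + \|\bg\|_{H^1(\Omega)^2} + \|\bg\|_{H^1(\Omega)^2}^2\bigr)\|\bphi\|_{H^1(\Omega)^2},
\end{equation*}
where the $\nu$ has been absorbed into $c$. Factoring $\|\bg\|_{H^1(\Omega)^2}(1+\|\bg\|_{H^1(\Omega)^2})$ out of the last two summands yields exactly the asserted bound and, as $\bphi\in\bV(\Omega)$ was arbitrary, establishes $\Phi\in\bV(\Omega)^*$. The main obstacle, such as it is, is the $L^4$ estimate on the trace of $\bg$; once the trace plus boundary Sobolev embedding are invoked, the remainder is Hölder and Cauchy–Schwarz.
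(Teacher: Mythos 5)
Your argument is correct and follows essentially the same route as the paper: termwise Cauchy--Schwarz/H\"older estimates, the two-dimensional embedding $H^1(\Omega)\hookrightarrow L^4(\Omega)$ for the convective term, and the boundary Sobolev/trace embedding $H^1(\Omega)^2\hookrightarrow L^q(\Gamma_{\rm out})^2$ for the outflow term, assembled into the factored bound. Your derivation of the boundary $L^4$ control via the trace theorem plus the one-dimensional embedding is just a justification of the same embedding the paper cites directly, so there is nothing further to add.
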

\begin{proof}
	The proof will utilize the compact embeddings $H^1(\Omega)^2 \hookrightarrow L^q(\partial\Omega)^2$ and $H^1(\Omega)^2 \hookrightarrow L^q(\Omega)^2$ for $q\ge2$ (see for example \cite[Part I, Theorem 6.3]{adams2003}).  Indeed, if ${\bphi}\in{\bV}(\Omega)$ then
	\begin{align*}
		|\langle \Phi, {\bphi}\rangle_{\bV(\Omega)^*\times\bV(\Omega)}| =&\, |({\blf}-{\bg}\cdot\nabla{\bg},{\bphi})_{\Omega} + \nu(\nabla{\bg},\nabla{\bphi})_{\Omega} + \frac{1}{2}({\bg}\cdot{\bn},{\bg}\cdot{\bphi})_{\Gamma_{\rm out}}|\\
		\le &\, |({\blf},{\bphi})_{\Omega}| + |({\bg}\cdot\nabla{\bg},{\bphi})_{\Omega}| + \nu|(\nabla{\bg},\nabla{\bphi})_{\Omega}| + \frac{1}{2}|({\bg}\cdot{\bn},{\bg}\cdot{\bphi})_{\Gamma_{\rm out}}|\\
		\le &\, \|{\blf}\|_{L^2(\Omega)^2}\|{\bphi}\|_{L^2(\Omega)^2} + \|{\bg}\|_{L^4(\Omega)^2}\|{\bg}\|_{H^1(\Omega)^2}\|{\bphi}\|_{L^4(\Omega)^2}\\
		& + \nu\|{\bg}\|_{H^1(\Omega)^2}\|{\bphi}\|_{H^1(\Omega)^2} + \frac{\tilde{c}_1}{2}\|{\bg}\|_{L^2(\Gamma_{\rm out})^2}^2\|{\bphi}\|_{L^2(\Gamma_{\rm out})^2}\\
		\le &\, c(\|{\blf}\|_{L^2(\Omega)^2} + (1+\|{\bg}\|_{H^1(\Omega)^2})\|{\bg}\|_{H^1(\Omega)^2})\|{\bphi}\|_{\bV(\Omega)},
	\end{align*}
where ${c} = \max\{1,\nu, \tilde{c}_1/2\}$ and $\tilde{c}_1$ is dependent on the $L^\infty$-norm of the outward normal vector ${\bn}$ which is bounded due to the regularity assumptions on the domain. Lastly, the linearity follows from the linearity of the action itself.
\end{proof}

Any function $\tilde\bu\in\bV(\Omega)$ that solves the variational equation \eqref{weak} is said to be a \textit{weak solution} to the Navier--Stokes equations \eqref{state} with the boundary condition \eqref{conboundcond}. The existence of the solution $\tilde\bu$ is summarized below.
\begin{thrm}
Let $\Omega$ be of class $C^{0,1},$ $\blf\in L^2(\Omega)^2$, and $\bg\in H^1(\Omega)^2$ satisfy \eqref{gprop}. The solution $\tilde\bu\in \bV(\Omega)$ to the variational problem \eqref{weak} exists such that 
\begin{equation}
\|\tilde{\bu}\|_{\bV} \le c(\|{\blf}\|_{L^2(\Omega)^2} + (1+\|{\bg}\|_{H^1(\Omega)^2})\|{\bg}\|_{H^1(\Omega)^2}),
\label{energy}
\end{equation}
for some constant \(c>0\). 
\label{th:wp}
\end{thrm}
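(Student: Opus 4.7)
The plan is the classical Galerkin scheme combined with Brouwer's fixed-point theorem, where the crucial ingredient is a coercivity estimate that exploits the cancellation built into the convective boundary condition \eqref{conboundcond}. Testing \eqref{weak} against $\bphi=\tilde{\bu}$ is the key computation: since $\tilde{\bu}$ is divergence-free and vanishes on $\Gamma_{\rm in}\cup\Gamma_{\rm w}\cup\Gamma_{\rm f}$, while ${\bg}$ is divergence-free and vanishes on $\Gamma_{\rm w}\cup\Gamma_{\rm f}$, integration by parts yields $b(\tilde{\bu};\tilde{\bu},\tilde{\bu})=\tfrac{1}{2}(\tilde{\bu}\cdot{\bn},|\tilde{\bu}|^2)_{\Gamma_{\rm out}}$ and $b({\bg};\tilde{\bu},\tilde{\bu})=\tfrac{1}{2}({\bg}\cdot{\bn},|\tilde{\bu}|^2)_{\Gamma_{\rm out}}$. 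These cancel exactly the first two contributions of $\tfrac{1}{2}\mathbb{C}(\tilde{\bu};\tilde{\bu},\tilde{\bu})$, so that
\[
\mathbb{B}(\tilde{\bu},\tilde{\bu})\;=\;\nu\|\nabla\tilde{\bu}\|_{L^2(\Omega)^{2\times2}}^{2}\;+\;b(\tilde{\bu};{\bg},\tilde{\bu})\;-\;\tfrac{1}{2}(\tilde{\bu}\cdot{\bn},{\bg}\cdot\tilde{\bu})_{\Gamma_{\rm out}}.
\]
This cancellation is precisely why \eqref{conboundcond} was adopted in place of \eqref{donothing}.

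The main obstacle is absorbing the two remaining indefinite terms into the diffusion. Direct H\"older/trace inequalities bound them only by $c\|{\bg}\|_{H^1(\Omega)^2}\|\nabla\tilde{\bu}\|^2$, which is useless without smallness of ${\bg}$. I would instead invoke Lemma \ref{midg} -- the announced analogue of \cite[Lemma IV.2.3]{girault1986}, modified to carry the $\Gamma_{\rm out}$-trace -- which, given any $\epsilon>0$, delivers an extension of the datum (still denoted ${\bg}$) satisfying \eqref{gprop} and
\[
|b({\bv};{\bg},{\bv})|+\tfrac{1}{2}|({\bv}\cdot{\bn},{\bg}\cdot{\bv})_{\Gamma_{\rm out}}|\;\le\;\epsilon\,\|\nabla{\bv}\|^2,\qquad\forall\,{\bv}\in{\bV}(\Omega).
\]
Choosing $\epsilon=\nu/2$ and estimating the right-hand side of \eqref{weak} via Proposition \ref{Phicont} then yields the a priori bound \eqref{energy}.

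For existence, I would apply the standard Galerkin procedure. Let $\{\bphi_k\}_{k\in\mathbb{N}}$ be a Hilbert basis of the separable space ${\bV}(\Omega)$ and look for $\tilde{\bu}_n=\sum_{k=1}^n c_k^n\bphi_k$ satisfying \eqref{weak} tested against $\bphi_j$ for $j=1,\dots,n$. This reduces to a continuous system $F(c)=0$ in $\mathbb{R}^n$. Repeating the coercivity computation inside the finite-dimensional subspace shows $(F(c),c)_{\mathbb{R}^n}>0$ as soon as $|c|$ exceeds a threshold that depends only on $\|\Phi\|_{{\bV}(\Omega)^*}$, so a standard corollary of Brouwer's fixed-point theorem produces a zero of $F$, i.e.\ a Galerkin solution $\tilde{\bu}_n$ obeying the uniform bound \eqref{energy}.

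Finally, passage to the limit. Along a subsequence one has $\tilde{\bu}_n\rightharpoonup\tilde{\bu}$ weakly in ${\bV}(\Omega)$ and, by the compact embeddings of $H^1(\Omega)^2$ into $L^q(\Omega)^2$ and $L^q(\partial\Omega)^2$ invoked in Proposition \ref{Phicont}, strongly in those spaces for every $q\ge2$. Each trilinear term in $\mathbb{A}$ and $\mathbb{C}$ decouples into a ``weak$\,\times\,$strong$\,\times\,$strong'' product -- H\"older in $L^2\times L^4\times L^4$ on $\Omega$ for $b$, and the analogous splitting on the trace for $\mathbb{C}$ -- so every nonlinearity converges to the corresponding expression for $\tilde{\bu}$. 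Density of the Galerkin subspaces in ${\bV}(\Omega)$ extends the identity to all test functions, giving a weak solution, and weak lower semicontinuity of the norm transfers the bound \eqref{energy} from $\tilde{\bu}_n$ to $\tilde{\bu}$.
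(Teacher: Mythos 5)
Your proposal is correct and follows essentially the same route as the paper: a Galerkin scheme whose coercivity comes from the exact cancellation $b(\tilde{\bu};\tilde{\bu},\tilde{\bu})+b({\bg};\tilde{\bu},\tilde{\bu})-\tfrac{1}{2}(\tilde{\bu}\cdot{\bn},|\tilde{\bu}|^2)_{\Gamma_{\rm out}}-\tfrac{1}{2}({\bg}\cdot{\bn},|\tilde{\bu}|^2)_{\Gamma_{\rm out}}=0$ together with Lemma \ref{midg} applied with $\gamma=\nu/2$, the bound on $\Phi$ from Proposition \ref{Phicont}, and passage to the limit via the compact embeddings into $L^q(\Omega)^2$ and $L^q(\partial\Omega)^2$. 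Your explicit Brouwer fixed-point argument for solvability of the finite-dimensional systems is a detail the paper leaves implicit, but it does not change the approach.
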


The proof of the theorem will make use of the following lemmata whose proofs can be easily redone and can be based for example on \cite[Lemma II.1.8]{temam1977} and \cite[Lemma IV.2.3]{girault1986}.

\begin{lmm}
Let $\bu,\bv,\bw\in \bV(\Omega)$, the trilinear form $b$ satisfies the following properties.
\begin{itemize}
	\item[1.] $|b(\bu;\bv,\bw)_{\Omega}| \le c\|\bu\|_{\bH(\Omega)}^{1/2}\|\bu\|_{\bV(\Omega)}^{1/2}\|\bv\|_{\bH(\Omega)}^{1/2}\|\bv\|_{\bV(\Omega)}^{1/2}\|\bw\|_{\bV(\Omega)}$;
	\item[2.] $b(\bu;\bv,\bw)_{\Omega} + b(\bu;\bw,\bv)_{\Omega} = \displaystyle\int_{\Gamma_{\rm out}} (\bu\cdot\bn)(\bv\cdot\bw)\du s$;
	\item[3.] $b(\bu;\bv,\bv)_{\Omega} = \displaystyle\frac{1}{2}\int_{\Gamma_{\rm out}} (\bu\cdot\bn)|\bv|^2\du s$.
\end{itemize}
\label{propb}
\end{lmm}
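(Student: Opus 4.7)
\emph{Plan.} I would prove the three estimates in the order (2), (3), (1): identity (2) is the underlying integration-by-parts formula, (3) is its diagonal specialisation, and (1) is obtained from (2) by shifting a derivative from $\bv$ onto $\bw$. For (2), since $\Div\bu = 0$ there is the pointwise identity
\begin{equation*}
(\bu\cdot\nabla)\bv\cdot\bw+(\bu\cdot\nabla)\bw\cdot\bv=\Div\bigl((\bv\cdot\bw)\bu\bigr),
\end{equation*}
so integrating over $\Omega$ and applying the divergence theorem yields an integral on $\partial\Omega$; since every $\bu\in\bV(\Omega)$ vanishes on $\Gamma_0=\partial\Omega\setminus\Gamma_{\rm out}$, the boundary contribution collapses to the required $\int_{\Gamma_{\rm out}}(\bu\cdot\bn)(\bv\cdot\bw)\du s$. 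Identity (3) is then immediate upon taking $\bv=\bw$ and dividing by two.

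For the interpolation estimate (1) I would use (2) to rewrite $b(\bu;\bv,\bw)=-b(\bu;\bw,\bv)+\int_{\Gamma_{\rm out}}(\bu\cdot\bn)(\bv\cdot\bw)\du s$ and estimate the two summands separately. The volume summand is handled by H\"older's inequality with exponents $(4,2,4)$, giving $|b(\bu;\bw,\bv)|\le\|\bu\|_{L^4(\Omega)^2}\|\nabla\bw\|_{L^2(\Omega)^{2\times 2}}\|\bv\|_{L^4(\Omega)^2}$, and the two-dimensional Ladyzhenskaya inequality $\|\bphi\|_{L^4(\Omega)^2}\le c\|\bphi\|_{L^2(\Omega)^2}^{1/2}\|\bphi\|_{H^1(\Omega)^2}^{1/2}$ applied to $\bu$ and $\bv$ then produces the factors $\|\bu\|_{\bH}^{1/2}\|\bu\|_{\bV}^{1/2}\|\bv\|_{\bH}^{1/2}\|\bv\|_{\bV}^{1/2}\|\bw\|_{\bV}$ that appear on the right-hand side.

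The hard part will be the boundary integral, which must be controlled by the same interpolated expression rather than by a cruder product of $H^1$ norms. My strategy is to apply H\"older on $\Gamma_{\rm out}$ together with a trace interpolation of the form $\|\bphi\|_{L^4(\Gamma_{\rm out})}\le c\|\bphi\|_{L^2(\Omega)^2}^{1/4}\|\bphi\|_{H^1(\Omega)^2}^{3/4}$ (available via $H^{3/4}(\Omega)\hookrightarrow H^{1/4}(\partial\Omega)\hookrightarrow L^4(\partial\Omega)$ on a Lipschitz domain) and the standard bound $\|\bphi\|_{L^2(\Gamma_{\rm out})}\le c\|\bphi\|_{H^1(\Omega)^2}$ for the factor containing $\bw$. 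Since every $\bphi\in\bV(\Omega)$ vanishes on the subset $\Gamma_0$ of positive measure, Poincar\'e's inequality renders $\|\cdot\|_{\bV}$ equivalent to $\|\nabla\cdot\|_{L^2(\Omega)^{2\times 2}}$, so that all expressions of the form $\|\bphi\|_{\bH}^{\theta}\|\bphi\|_{\bV}^{1-\theta}$ with $\theta\in[0,1]$ are mutually equivalent on $\bV(\Omega)$ up to constants depending only on $\theta$ and the Poincar\'e constant. This equivalence lets me recast the $(1/4,3/4)$ trace bound in the $(1/2,1/2)$ form demanded by the statement, absorb all constants into $c$, and close the argument.
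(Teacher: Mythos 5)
Your proofs of (2) and (3) are correct and coincide with what the paper intends: the paper itself offers no written proof, only a pointer to \cite[Lemma II.1.8]{temam1977} and \cite[Lemma IV.2.3]{girault1986}, and the only modification needed there is to retain the boundary contribution on $\Gamma_{\rm out}$ when integrating $\Div((\bv\cdot\bw)\bu)$, which is exactly what you do. The volume part of your argument for (1) — H\"older with exponents $(4,2,4)$ on $b(\bu;\bw,\bv)$ followed by the two-dimensional Ladyzhenskaya inequality — is also fine.

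The gap is in the boundary term. Your closing step rests on the claim that all quantities $\|\bphi\|_{\bH(\Omega)}^{\theta}\|\bphi\|_{\bV(\Omega)}^{1-\theta}$, $\theta\in[0,1]$, are mutually equivalent on $\bV(\Omega)$. Poincar\'e gives only the one-sided bound $\|\bphi\|_{\bH(\Omega)}\le C\|\bphi\|_{\bV(\Omega)}$, so you may lower the $\bH$-weight at the price of a constant, but not raise it: passing from your $(1/4,3/4)$ trace bound to the $(1/2,1/2)$ form demanded by the statement amounts to $\|\bphi\|_{\bH(\Omega)}^{1/4}\|\bphi\|_{\bV(\Omega)}^{3/4}\le c\,\|\bphi\|_{\bH(\Omega)}^{1/2}\|\bphi\|_{\bV(\Omega)}^{1/2}$, i.e.\ to the reverse Poincar\'e inequality $\|\bphi\|_{\bV(\Omega)}\le c\|\bphi\|_{\bH(\Omega)}$, which is false on $\bV(\Omega)$ (oscillatory divergence-free fields have bounded $\bH$-norm and arbitrarily large $\bV$-norm). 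Nor can the step be repaired by a sharper trace inequality: the bound $\|\bphi\|_{L^4(\Gamma_{\rm out})}\le c\|\bphi\|_{\bH(\Omega)}^{1/2}\|\bphi\|_{\bV(\Omega)}^{1/2}$ already fails by scaling — for a bump of width $\lambda$ centred at a point of $\Gamma_{\rm out}$ (which can be taken divergence-free via a stream function $\lambda\chi((x-x_0)/\lambda)$ and vanishing near $\Gamma_0$) the left side scales like $\lambda^{1/4}$ while the right side scales like $\lambda^{1/2}$ — so no H\"older splitting of $\int_{\Gamma_{\rm out}}(\bu\cdot\bn)(\bv\cdot\bw)\du s$ yields the $(1/2,1/2)$ exponents on both $\bu$ and $\bv$; at best one factor carries $(1/2,1/2)$ (via the multiplicative trace inequality in $L^2(\Gamma_{\rm out})$) and the other $(1/4,3/4)$. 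The clean way to prove an estimate of type (1) is the direct one on the volume integral, $|b(\bu;\bv,\bw)_\Omega|\le\|\bu\|_{L^4(\Omega)^2}\|\nabla\bv\|_{L^2(\Omega)^{2\times2}}\|\bw\|_{L^4(\Omega)^2}$ plus Ladyzhenskaya, which places the full $\bV$-norm on the argument carrying the derivative and interpolated norms on the other two; this is the estimate of the cited references (where the two placements coincide by antisymmetry, the fields there vanishing on the whole boundary) and suffices for every use the paper makes of the lemma, whereas your rearranged route would require the boundary term to obey the $(1/2,1/2)$–$(1/2,1/2)$–full bound, which the scaling above shows it does not.
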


\begin{lmm}
For any $\gamma>0$ there exists ${\bw}_0 = {\bw}_0(\gamma)\in H^1(\Omega)^2$ such that 
\begin{align}
	\Div {\bw}_0 = 0,\quad &{\bw}_0{\large|}_{\Gamma_0} = {\bg},\\
	|b({\bv};{\bw}_0,{\bv})_{\Omega} - \frac{1}{2}({\bv}\cdot{\bn}, {\bv}\cdot{\bw}_0)_{\Gamma_{\rm out}}&|\le \gamma\|{\bv}\|^2_{\bV(\Omega)}\quad \forall\bv\in\bV(\Omega).
\end{align} 
\label{midg}
\end{lmm}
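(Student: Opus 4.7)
The plan is to adapt the classical Hopf lift of \cite[Lemma~IV.2.3]{girault1986} to accommodate the presence of the outflow portion $\Gamma_{\rm out}\subset\partial\Omega$. First, I invoke \cite[Lemma~2.2]{girault1986} to obtain a divergence-free initial extension $\tilde\bg\in H^1(\Omega)^2$ with $\tilde\bg|_{\Gamma_0}=\bg$; the hypothesis \eqref{gprop} delivers the required flux compatibility. Since $\tilde\bg$ is solenoidal in the planar domain $\Omega$ and has vanishing circulation around the interior component $\Gamma_{\rm f}$ (because $\bg\equiv 0$ there), a stream function $\psi\in H^2(\Omega)$ exists with $\tilde\bg=(\partial_2\psi,-\partial_1\psi)$. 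For $\epsilon\in(0,\epsilon_0)$, with $\epsilon_0$ small enough that $d_{\Gamma_0}(x):=\mathrm{dist}(x,\Gamma_0)$ is smooth on $S_\epsilon:=\{x\in\overline\Omega:d_{\Gamma_0}(x)<\epsilon\}$, I choose a Hopf-type cutoff $\theta_\epsilon\in C^\infty([0,\infty))$ with $\theta_\epsilon(0)=1$, $\theta_\epsilon(t)=0$ for $t\ge\epsilon$ and $|\theta_\epsilon'(t)|\le K/(t|\log\epsilon|)$ on $(0,\epsilon)$, and set
\[\xi_\epsilon:=\theta_\epsilon\circ d_{\Gamma_0},\qquad \bw_0:=\bigl(\partial_2(\xi_\epsilon\psi),\,-\partial_1(\xi_\epsilon\psi)\bigr).\]
Then $\bw_0\in H^1(\Omega)^2$; $\Div\bw_0=0$ is immediate from the curl structure, and $\theta_\epsilon(0)=1$ yields $\bw_0|_{\Gamma_0}=\tilde\bg|_{\Gamma_0}=\bg$, so the first two assertions hold irrespective of $\epsilon$.

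The quantitative estimate is the crux of the argument. Because $\bv=0$ on $\Gamma_0$ for every $\bv\in\bV(\Omega)$, the Hardy inequality $\|\bv/d_{\Gamma_0}\|_{L^2(\Omega)}\le c\|\bv\|_{\bV(\Omega)}$ holds on the Lipschitz domain $\Omega$. Expanding $\nabla\bw_0$ by the Leibniz rule produces the pieces $\xi_\epsilon\nabla^2\psi$, $\nabla\xi_\epsilon\otimes\nabla\psi$ and $\psi\,\nabla^2\xi_\epsilon$, all supported inside $S_\epsilon$. The delicate term in $b(\bv;\bw_0,\bv)_\Omega=\int_\Omega(\bv\cdot\nabla)\bw_0\cdot\bv\,dx$ comes from $|\nabla\xi_\epsilon|\le K/(d_{\Gamma_0}|\log\epsilon|)$; absorbing the singular weight via one application of Hardy bounds this contribution by $cK|\log\epsilon|^{-1}\|\psi\|_{W^{1,\infty}(S_\epsilon)}\|\bv\|_{\bV(\Omega)}^2$, which tends to zero as $\epsilon\to 0$. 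The piece with $\xi_\epsilon\nabla^2\psi$ is controlled by Cauchy--Schwarz against $\|\psi\|_{H^2(\Omega)}$ multiplied by $|S_\epsilon|^{1/2}=O(\epsilon^{1/2})$, while the piece $\psi\,\nabla^2\xi_\epsilon$, although pointwise singular, is tamed by two applications of Hardy on a set of measure $O(\epsilon)$.

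The outflow boundary term $\tfrac12(\bv\cdot\bn,\bv\cdot\bw_0)_{\Gamma_{\rm out}}$ is handled analogously: since $\xi_\epsilon$ vanishes on $\Gamma_{\rm out}\cap\{d_{\Gamma_0}\ge\epsilon\}$, the trace $\bw_0|_{\Gamma_{\rm out}}$ is supported in an $\epsilon$-neighborhood of the corner set $\overline\Gamma_{\rm out}\cap\overline\Gamma_0$. A one-dimensional Hardy estimate along $\Gamma_{\rm out}$ combined with the trace inequality yields the same $o_\epsilon(1)\|\bv\|_{\bV(\Omega)}^2$ scaling. Collecting the contributions gives
\[\bigl|b(\bv;\bw_0,\bv)_\Omega-\tfrac12(\bv\cdot\bn,\bv\cdot\bw_0)_{\Gamma_{\rm out}}\bigr|\le c\,\eta_\psi(\epsilon)\,\|\bv\|_{\bV(\Omega)}^2,\]
with $\eta_\psi(\epsilon)\to 0$ as $\epsilon\to 0$; choosing $\epsilon$ so that $c\,\eta_\psi(\epsilon)\le\gamma$ closes the argument. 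The principal difficulty, distinguishing this from the classical setting, is calibrating the Hopf lift so that both the volume term and the newly appearing $\Gamma_{\rm out}$ boundary term become small simultaneously, which requires the dual use of the interior Hardy estimate and its one-dimensional trace counterpart.
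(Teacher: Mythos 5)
Your overall strategy is the same as the paper's: lift $\bg$ through a stream function, localize the lift near $\Gamma_0$ with a Hopf-type cutoff, make the convective volume term small by Hardy-type estimates, and make the new $\Gamma_{\rm out}$ term small via a one-dimensional Hardy inequality along $\Gamma_{\rm out}$ together with the trace embedding. However, as written there are genuine gaps. First, the bound $cK|\log\epsilon|^{-1}\|\psi\|_{W^{1,\infty}(S_\epsilon)}\|\bv\|_{\bV(\Omega)}^2$ is not available: the stream function of an $H^1$ divergence-free extension is only in $H^2(\Omega)$, and in two dimensions $H^2(\Omega)$ does not embed into $W^{1,\infty}(\Omega)$, so $\nabla\psi$ need not be bounded on $S_\epsilon$; the smallness has to come instead from $\nabla\psi\in L^4$ plus absolute continuity of the integral over the shrinking set $S_\epsilon$, which is in effect what the cited estimate \eqref{l2est} of \cite[Lemma IV.2.3]{girault1986} encodes. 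Second, because you differentiate $\bw_0$ directly, the term $\psi\,\nabla^2\xi_\epsilon$ appears, and ``two applications of Hardy'' is not yet an argument: it needs $|\nabla^2\xi_\epsilon|\lesssim d_{\Gamma_0}^{-2}|\log\epsilon|^{-1}$, hence $C^{1,1}$-type regularity of the distance function near $\Gamma_0$, which is not available on the merely Lipschitz domains under which this lemma is invoked (Theorem \ref{th:wp} assumes only $C^{0,1}$). The paper avoids both difficulties at once: it first uses the antisymmetry identity of Lemma \ref{propb}(2), so that only $\bw_0$ itself — never $\nabla\bw_0$ — enters the volume integral, and it takes the ready-made cutoff of \cite[Lemma IV.2.4]{girault1986}, which is constructed for Lipschitz domains and only requires control of first derivatives.

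Third, the one-dimensional Hardy inequality you invoke along $\Gamma_{\rm out}$ (Lemma \ref{lemma:hardy}) requires the function to vanish at the endpoint of the interval, i.e.\ at the junction of $\Gamma_{\rm out}$ with $\Gamma_0$. This is exactly why the paper fixes the normalization of the stream function so that it vanishes on $\Gamma_{\rm w}$, the boundary portion adjacent to $\Gamma_{\rm out}$; you only arrange vanishing circulation around $\Gamma_{\rm f}$ and never fix this normalization, and without it the trace estimate $\|\bw_0\|_{L^2(\Gamma_{\rm out})}\to 0$ does not follow. Finally, a smaller point: the piece $\xi_\epsilon\nabla^2\psi$ is not made small by Cauchy--Schwarz against $\|\psi\|_{H^2(\Omega)}$ times $|S_\epsilon|^{1/2}$ alone; one needs either an extra H\"older step extracting a positive power of $|S_\epsilon|$ against a higher Lebesgue norm of $\bv$, or simply $\|\nabla^2\psi\|_{L^2(S_\epsilon)}\to 0$ by absolute continuity. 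These repairs bring your argument back to essentially the proof given in the Appendix.
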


\begin{rmrk}
We note that the assumption that the input function ${\bg}$ is in $H^1(\Omega)^2$ is valid also due to Lemma \ref{midg}. Meaning to say, ${\bw}_0 = {\bg}$ not only on $\Gamma_0$ but also inside $\Omega$. In particular, we also get the estimate
\begin{align*}
	|b({\bv};{\bg},{\bv})_{\Omega} - \frac{1}{2}({\bv}\cdot{\bn}, {\bv}\cdot{\bg})_{\Gamma_{\rm out}}&|\le \gamma\|{\bv}\|^2_{\bV(\Omega)}\quad \forall\bv\in\bV(\Omega).
\end{align*} 
\end{rmrk}
Furthermore, let us also point out that even though the input function is a fixed Dirichlet profile on the boundary $\Gamma_{\rm in}$, its extension is dependent on the domain $\Omega$. This implies that the extension -- which we chose to denote similarly as ${\bg}$ -- in $\Omega$ will also be sensitive with domain deformations.

Aside from the lemmata, it is also noteworthy to point out the well-definedness of the trilinear form $b$, i.e., $[(\bu\cdot\nabla)\bv]\cdot\bw\in L^1(\Omega)$ whenever $\bu,\bv,\bw\in \bV(\Omega)$. \\
\\
\noindent{\it Proof of Theorem \ref{th:wp}}. The proof utilizes Galerkin method by considering the finite dimensional subspaces $\bV_n\subset\bV(\Omega)$. We consider the projected problem on $\bV_n$, i.e., we find solutions $\tilde\bu_n\in\bV_n$ that satisfies for all $\bphi\in \bV_n$ the equation
\begin{equation}
 \mathbb{A}(\tilde\bu_n;\tilde\bu_n,\bphi)_\Omega - \frac{1}{2}\mathbb{C}(\tilde\bu_n;\tilde\bu_n,\bphi) = \langle\Phi,{\bphi} \rangle_{{\bV}(\Omega)^*\times{\bV}(\Omega)}.
\label{weakdiscrete}
\end{equation}
Note that the left hand side is coercive, i.e., for any $\bu\in\bV(\Omega)$ there exists $c>0$ such that 
\[ \mathbb{B}({\bu},{\bu}) =  \mathbb{A}(\bu;\bu,\bu)_\Omega -\frac{1}{2}\mathbb{C}(\bu;\bu,\bu) \ge c\|\bu\|^2_{\bV(\Omega)}. \]
Indeed, by choosing $\gamma=\frac{\nu}{2}$ in Lemma \ref{midg}, 
\[ |b(\bu;\bg,\bu)_{\Omega} -\frac{1}{2}({\bu}\cdot{\bn}, {\bu}\cdot{\bg})_{\Gamma_{\rm out}}| \le \frac{\nu}{2}\|\bu\|^2_{\bV(\Omega)}. \]
Furthermore, 
\begin{align*} 
b(\bu;\bu,\bu)_{\Omega} + b(\bg;\bu,\bu)_{\Omega} -\frac{1}{2}({\bu}\cdot{\bn}, {\bu}\cdot{\bu})_{\Gamma_{\rm out}}-\frac{1}{2}({\bg}\cdot{\bn}, {\bu}\cdot{\bu})_{\Gamma_{\rm out}}& = 0.
\end{align*}
These identities give us the coercivity
\begin{align*}
	\frac{\nu}{2}\|\bu\|_{\bV(\Omega)}^2 \le \mathbb{B}({\bu},{\bu}).
\end{align*}

Using the coercivity above, and Lemma \ref{Phicont}, for any $n\ge1$ we have
\begin{align*}
	\frac{\nu}{2}\|\tilde\bu_n\|_{\bV(\Omega)}^2 \le c(\|{\blf}\|_{L^2(\Omega)^2} + (1+\|{\bg}\|_{H^1(\Omega)^2})\|{\bg}\|_{H^1(\Omega)^2})\|\tilde\bu_n\|_{\bV(\Omega)}.
\end{align*}
This gives us the uniform estimate, 
\begin{align*}
	\|\tilde\bu_n\|_{\bV(\Omega)} \le c(\|{\blf}\|_{L^2(\Omega)^2} + (1+\|{\bg}\|_{H^1(\Omega)^2})\|{\bg}\|_{H^1(\Omega)^2}),
\end{align*}
for some $c:=c(\nu,\Omega)>0$. Hence, there exists $\tilde\bu\in\bV(\Omega)$ such that 
\begin{align*}
\left\{
	\begin{aligned}
		\tilde\bu_n\rightharpoonup\tilde\bu &&\text{in }\bV(\Omega),\\
		\tilde\bu_n\to\tilde\bu &&\text{in }\bH(\Omega).\\
	\end{aligned}
\right.
\end{align*}
Using usual arguments (see for example \cite{gunzburger1998,kasumba2012,temam1977,girault1986}), we can easily show that for any $\bphi\in \bV(\Omega)$, $\mathcal{A}(\tilde\bu_n;\tilde\bu_n,\bphi)_{\Omega}-\frac{1}{2}\mathbb{C}(\tilde\bu_n;\tilde\bu_n,\bphi) \to \mathbb{A}(\tilde\bu;\tilde\bu,\bphi)_{\Omega}-\frac{1}{2}\mathbb{C}(\tilde\bu;\tilde\bu,\bphi) $. 

Indeed,  by the virtue of Lemma \ref{propb}(3), \eqref{weakdiscrete} can be written as
\begin{align*}
	\nu a(\tilde{\bu}_n,{\bphi}) - b(\tilde{\bu}_n+{\bg};{\bphi},\tilde{\bu}_n) -b(\tilde{\bu}_n,{\bphi},{\bg}) &+\frac{1}{2} \mathbb{C}(\tilde{\bu}_n;\tilde{\bu}_n,\bphi)= \langle\Phi,\bphi\rangle_{\bV(\Omega)^*\times\bV(\Omega)}
\end{align*}
By following the proofs on the said references, it can be shown that the next convergences hold
\begin{align*}
	\left\{
		\begin{aligned}
			&a(\tilde{\bu}_n,\bv)_{\Omega} \to a(\tilde{\bu},\bv)_{\Omega},\\
			&b(\tilde{\bu}_n;\bphi,\tilde{\bu}_n)_{\Omega} \to b(\tilde{\bu};\bphi,\tilde{\bu})_{\Omega},\\
			&b(\tilde{\bu}_n;\bphi,\bg)_{\Omega} \to b(\tilde{\bu};\bphi,\bg)_{\Omega},\\
			&b(\bg;\bphi,\tilde{\bu}_n)_{\Omega} \to  b(\bg;\bphi,\tilde{\bu})_{\Omega}.
		\end{aligned}
	\right.
\end{align*}
So, what remains for us to show is that
\begin{align*}
	\left\{
		\begin{aligned}
			&(\tilde\bu_n\cdot\bn,\tilde\bu_n\cdot\bphi)_{\Gamma_{\rm out}}\to (\tilde\bu\cdot\bn,\tilde\bu\cdot\bphi)_{\Gamma_{\rm out}},\\
			&(\tilde\bu_n\cdot\bn,\bg\cdot\bphi)_{\Gamma_{\rm out}}\to(\tilde\bu\cdot\bn,\bg\cdot\bphi)_{\Gamma_{\rm out}},\\
			&(\bg\cdot\bn,\tilde\bu_n\cdot\bphi)_{\Gamma_{\rm out}}\to(\bg\cdot\bn,\tilde\bu\cdot\bphi)_{\Gamma_{\rm out}} .
		\end{aligned}
	\right.
\end{align*}
We focus on the first convergence, since the others can be done similarly. Before we begin, we note that $H^1(\Omega)$ is compactly embedded to $L^q(\partial\Omega)$, for $q\ge2$ (see for example \cite[Part I, Theorem 6.3]{adams2003}). This implies that $\tilde\bu_n\to\tilde\bu$ and $\tilde\bu_n\cdot\bn\to\tilde\bu\cdot\bn$ in $L^q(\Gamma_{\rm out})^2$ and $L^q(\Gamma_{\rm out})$, respectively. Hence, we have the following computation:
\begin{align*}
	\big|(\tilde\bu_n\cdot\bn,\tilde\bu_n\cdot\bphi)_{\Gamma_{\rm out}}-  (\tilde\bu\cdot\bn,&\tilde\bu\cdot\bphi)_{\Gamma_{\rm out}}\big|\\
	 \le& \left| ([\tilde\bu_n-\tilde\bu]\cdot\bn,\tilde\bu_n\cdot\bphi)_{\Gamma_{\rm out}} \right| + \left| (\bu\cdot\bn,[\tilde\bu_n-\tilde\bu]\cdot\bphi)_{\Gamma_{\rm out}} \right|\\
	\le&\ c\|(\tilde\bu_n-\tilde\bu)\cdot\bn \|_{L^2(\Gamma_{\rm out})}\|\tilde\bu_n\|_{L^4(\Gamma_{\rm out})^2}\|\bphi\|_{L^4(\Gamma_{\rm out})^2}\\
	& + c\|\tilde\bu\cdot\bn \|_{L^4(\Gamma_{\rm out})}\|\tilde\bu_n-\tilde\bu\|_{L^2(\Gamma_{\rm out})^2}\|\bphi\|_{L^4(\Gamma_{\rm out})^2}.
\end{align*}
The right side approaches zero from the mentioned convergences in $L^q(\Gamma_{\rm out})^2$ and $L^q(\Gamma_{\rm out})$, which proves the claim. 
Therefore, $\tilde\bu$ solves \eqref{weak} with the energy estimate
\begin{align*}
	\|\tilde\bu\|_{\bV(\Omega)} \le c(\|{\blf}\|_{L^2(\Omega)^2} + (1+\|{\bg}\|_{H^1(\Omega)^2})\|{\bg}\|_{H^1(\Omega)^2}).
\end{align*}\qed

\begin{rmrk}

(i) Even though we just assumed ${\blf}\in L^2(\Omega)^2$ and ${\bg}\in H^1(\Omega)^2$, as we shall see later, we can have these functions extended to the hold all domain. Furthermore, since $\Omega\subset D$, the energy estimate can be extended to the hold-all domain $D$, i.e.,
\[\|\tilde\bu\|_{\bV(\Omega)} \le c(\|{\blf}\|_{L^2(D)^2} + (1+\|{\bg}\|_{H^1(D)^2})\|{\bg}\|_{H^1(D)^2}), \]
and $c>0$ is dependent on $D$, thanks to Faber--Krahn inequality.
\noindent(ii) As previously mentioned, it can be shown that a more regular domain yields a more regular solution. In particular, if $\Omega$ satisfies \eqref{domainassumption}, then the weak solution to \eqref{weak} satisfies $\tilde{\bu}\in \bV(\Omega)\cap\,H^2(\Omega)^2$, and as a consequence of the Rellich-Kondrachov embedding theorem \cite[Chapter~5, Theorem~6]{evans1998} the solution is in $C(\overline{\Omega})^2$.
\end{rmrk}

The existence of the distribution $p$ is a direct consequence of De Rham's theorem \cite[Proposition~I.1.1]{temam1977} in such a way that $p\in L^2(\Omega)$. Furthermore, if we assume that 
\begin{align}
4\nu^2 > \|\Phi\|_{\bV'}\sup_{\bu,\bv,\bw}\frac{|b(\bu;\bv,\bw)_{\Omega}-\frac{1}{2}({\bu}\cdot{\bn},{\bv}\cdot{\bw})_{\Gamma_{\rm out}} |}{\|\bu\|_{\bV(\Omega)}\|\bv\|_{\bV(\Omega)}\|\bw\|_{\bV(\Omega)}} ,
\label{est:uniqueness}
\end{align}
then one can show that the solution is unique.

In strong form, if we assume appropriate regularity on $\Omega$,  and if $\tilde\bu\in\bV(\Omega)\cap H^2(\Omega)^2$ solves the variational equation \eqref{weak} then $\bu = \tilde{\bu}+\bg \in H^2(\Omega)^2$ can be regarded as the solution to the equations,
\begin{align}
	\left\{
	\begin{aligned}
	\, - \nu\Delta{\bu} +({\bu}\cdot\nabla){\bu} +  \nabla p
	&= {\blf} && \text{ in } \Omega,\\	
	\, \Div{\bu} &= 0 && \text{ in } \Omega,\\ 		
	\, {\bu} & = {\bg} && \text{ on }\Gamma_{\rm in}\\
	\, {\bu} &= 0 && \text{ on } \Gamma_{\rm{f}}\cup\Gamma_{\rm w},\\
	\, -p\bn + \nu\partial_{\bn}{\bu} & = \frac{1}{2}({\bu}\cdot{\bn}){\bu } &&\text{ on } \Gamma_{\rm out}.
	\end{aligned} \right.
	\label{strong_nontranslated}
\end{align}

Before closing this section we look at some operators which will be handy when we go to the investigation of sensitivities. First, let us introduce the Stokes operator $A_\Omega:D(A_\Omega)\subset\bH(\Omega)\to\bH(\Omega) $ defined by $A_\Omega\bu = -P_\Omega\Delta\bu$ for $\bu\in D(A_\Omega)$. Here $P_\Omega:L^2(\Omega)^2\to \bH(\Omega)$ is the Leray projection that is associated with the decomposition $L^2(\Omega)^2 = \bH(\Omega)\oplus\nabla L^2(\Omega)$.  As a direct consequence, if the domain $\Omega$ satisfies \eqref{domainassumption} then $D(A_\Omega) = \bV(\Omega)\cap H^2(\Omega)^2 $, hence item (ii) of the remark above.  We mention, furthermore, that the Stokes operator is a self-adjoint positive operator with dense domain and compact inverse, this in turn, gives us the orthonormal basis of $\bH(\Omega)$. This orthonormal basis makes the Galerkin method in the proof of Theorem \ref{th:wp} possible to utilize.  With this operator, we can write the first member of $\mathbb{A}$ as follows,
\[a(\bu,\bv)_\Omega = \langle A_\Omega\bu, \bv \rangle_{\bV(\Omega)^*\times \bV(\Omega)}. \]
We also introduce the operator $B_\Omega:\bV(\Omega)\times\bV(\Omega)\to \bV(\Omega)^*$ defined by 
\[\langle B_\Omega(\bu,\bv),\bw\rangle_{\bV(\Omega)^* \times \bV(\Omega)} = b(\bu;\bv,\bw)_\Omega \quad\forall \bu,\bv,\bw\in\bV(\Omega).\]
For any $\bu\in\bV(\Omega)$, we shall also use the notation $B_\Omega{\bu} := B_\Omega({\bu},{\bu})$.  Lastly, we consider the boundary operator $C:\bV(\Omega)\times\bV(\Omega)\to H^{-\frac{1}{2}}(\Gamma_{\rm out})^2$ defined by
 \[ \langle C(\bu,\bv),\bw\rangle_{H^{-\frac{1}{2}}(\Gamma_{\rm out})^2\times H^{\frac{1}{2}}(\Gamma_{\rm out})^2} = \int_{\Gamma_{\rm out}} (\bu\cdot\bn)(\bv\cdot\bw)\du s. \]
The well-definedness, and continuity of the operator $C$ can be established by utilizing the Rellich--Kondrachov embedding  \cite[Part I, Theorem 6.3]{adams2003}.

\section{Existence of Optimal Shapes}\label{sect3}
The purpose of this section is to establish the existence of a solution to the shape optimization problem. To be able to do this, a set of admissible domains $\Oad$ will be considered to ensure the existence of state solutions as well as ensure that -- assuming that it exists --  the solution is embedded to the hold-all domain $D$. In this set of admissible domains, a topology will be endowed upon which $\Oad$ itself is compact. Utilizing this compactness property, standard sequential arguments will then be followed to establish the promised existence of the optimal shape.

\subsection{Cone property and the set of admissible domains}

Before we begin, we assume that $\Gamma_{\rm out}\subset \partial D$, which is not going to be an issue on the generality of the problem since $\Gamma_{\rm out}$ is not a part of the free-boundary. Now, to define the topology on the set of admissible domains, we shall resort to the collection of domains that satisfy the cone property. Note that, one way to define the topology on the set of admissible domains is by parametrizing the free-boundary. Examples of such parametrizations are found in \cite{rabago2019,rabago2020} and the references therein. 
However defining the domains in such way,  will be problematic for the current problem since the free-boundary parametrization might lead to generating domains with varying volumes.

To pass through the challenge of preserving the volume, a classical way to define the topology on the set of admissible domains is by considering the collection of characteristic functions, which is a fact highlighted by Henrot and Privat in \cite{privat2010} and is rigorously discussed in \cite{zolesio2011} and \cite{henrot2018}. 
But before we delve into this topology, let us first look at the manner by which the domains are defined. 
In particular, we shall consider domains which satisfy the \textit{cone property} as defined in \cite{chenais1975}. 

\begin{dfntn} Let $h>0$ and $\theta\in[0,2\pi]$, and \(\xi\in\mathbb{R}^2\) such that $\|\xi\| = 1$. 

(i.) The cone of angle $\theta$, height $h$ and axis $\xi$ is defined as 
\[ C(\xi,\theta,h) = \{x\in\mathbb{R}^2: (x,\xi)>\|x\|\cos\theta, \|x\|<h \},\]
where \((\cdot,\cdot)\) and $\|\cdot\|$ denote the inner product and Euclidean norm in $\mathbb{R}^2$, respectively.

(ii.) A set $\Omega\subset\mathbb{R}^2$ is said to satisfy the \text{cone property} if and only if for all $x\in\partial\Omega$, there exists $C_x=C(\xi_x,\theta,h)$, such that for all $y\in B(x,r)\cap\Omega$ we have $y+C_x \subset\Omega$.
\end{dfntn}

From this definition, we now define the set of admissible domains  as 
\[ \Oad := \{\Omega\subset D: \Omega\text{ satisfies the }cone\ property\text{ and }|\Omega|=m \}. \]
A sequence $\{\Omega_n\}\subset\Oad$ is said to converge to $\Omega\in\Oad$ if 
\[\chi_{\Omega_n}\ws\chi_{\Omega} \text{ in }L^\infty(D),\]
where the function $\chi_A$ for a set $A\subset\mathbb{R}^2$ refers to the characteristic function defined by
\[\chi_A(x) =\left\{\begin{matrix} 1&\text{if } x\in A,\\ 0&\text{if }x\not\in A. \end{matrix}\right.   \] 

\begin{rmrk}
(i.) This set of admissible domains has been established to be non-empty (see the proof of Proposition 4.1.1 in \cite{henrot2018}), which exempts us from the futility of the analyses we will be going through in the succeeding sections.\\
(ii.) Note the this convergence may seem too weak in the sense that the weak$^*$ convergence only assures us that the limit $\chi_\Omega$ only satisfies $\chi_\Omega(x) \in [0,1]$, however as pointed out by Henrot, et.al., \cite{henrot2018} in Proposition 2.2.1, this convergence holds in the space $L^p_{loc}(\mathbb{R}^2)$ and thus $\chi_\Omega$ is an almost everywhere characteristic function. We refer the reader to \cite[Chapter 5]{zolesio2011} and \cite[Chapter 2~Section 3]{henrot2018} for a more detailed discussion on the topology of characteristic functions of finite measurable domains.
\end{rmrk}

Before we mention the compactness of the set $\Oad$, let us first look at an important implication of the cone property, i.e., the existence of a uniform extension operator, and is given by the lemma below.
\begin{lmm}[cf \cite{chenais1975}]
There exists $K>0$ such that for all $\Omega\in\Oad$, there exists \[\mathcal{E}_{\Omega}^d:H^{m}(\Omega)^d\to H^{m}(D)^d\] which is linear and continuous such that  $\max\{\|\mathcal{E}^d_\Omega\|\} \le K$.
\label{lemmal:ubp}
\end{lmm}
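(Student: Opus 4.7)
The plan is to establish this via the classical construction of Chenais: build the extension operator $\mathcal{E}_\Omega^d$ locally by reflecting across Lipschitz graphs given by the cone property, then glue the pieces together with a partition of unity, and finally verify that every constant appearing along the way depends only on the fixed cone parameters $(\theta, h)$ and on the hold-all $D$, not on the particular $\Omega \in \Oad$. This uniformity is precisely what converts pointwise boundedness into the uniform bound $\|\mathcal{E}_\Omega^d\| \le K$.

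The first step is to observe that the cone property of parameters $(\theta, h)$ implies that $\Omega$ is a Lipschitz domain whose local boundary representations, in suitable rotated coordinates aligned with the cone axis $\xi_x$, are graphs of Lipschitz functions $\phi_x$ whose Lipschitz constant depends only on $\theta$ (essentially $\cot\theta$) and whose local chart size depends only on $h$. I would fix a ball radius $r_0 = r_0(\theta,h) > 0$ so that on $B(x,r_0) \cap \Omega$ the boundary is the graph of such a $\phi_x$, and choose once and for all a reflection-type extension $\mathcal{E}_x : H^m(U^+) \to H^m(U)$ on the associated cylinder $U$ (for instance the higher-order reflection operator of Hestenes/Calderón) whose operator norm depends only on the chart radius and the Lipschitz constant of $\phi_x$, hence only on $(\theta, h, m, d)$.

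Next, I would cover $\overline{\Omega}$ by finitely many balls: a boundary layer of balls $\{B(x_i, r_0/2)\}_{i=1}^N$ centered on $\partial\Omega$, together with one interior set $\Omega_0 \subset\subset \Omega$ on which no extension is needed. Because $\overline\Omega \subset \overline D$ is uniformly bounded and the radius $r_0$ is fixed, a Besicovitch/Vitali-type packing argument guarantees that $N$ can be chosen independently of $\Omega$; correspondingly one can build a subordinate partition of unity $\{\eta_i\}_{i=0}^N$ whose $C^m$ bounds depend only on $r_0$ and $N$. For $u \in H^m(\Omega)^d$ I then define
\begin{equation*}
\mathcal{E}_\Omega^d u \,=\, \eta_0 u \,+\, \sum_{i=1}^N \eta_i\, \mathcal{E}_{x_i}(u|_{B(x_i,r_0)\cap\Omega}),
\end{equation*}
extended by zero outside $D$ (using a cutoff near $\partial D$ if necessary, noting $\Omega \subset D$). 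Linearity is immediate, and the $H^m(D)^d$ norm of $\mathcal{E}_\Omega^d u$ is bounded by a sum of $N+1$ terms, each controlled by $\|u\|_{H^m(\Omega)^d}$ times constants depending only on the local extension norms and the partition-of-unity bounds, all of which are independent of $\Omega$.

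The main obstacle, and the step where the cone property really earns its keep, is verifying the uniformity of the covering and the chart size. One must rule out the possibility that, as $\Omega$ varies in $\Oad$, the boundary develops sharper and sharper features that would force $r_0 \to 0$ or $N \to \infty$. The cone property provides exactly the equicontinuity estimate needed: at every boundary point of every admissible domain the same cone $C(\xi_x,\theta,h)$ fits inside $\Omega$, which in turn forces a uniform lower bound on the chart radius and a uniform upper bound on the Lipschitz constant of the local graphs. Once this uniformity is secured, setting $K$ equal to the resulting common upper bound of the operator norms completes the proof.
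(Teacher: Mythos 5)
The paper does not actually prove this lemma: it is stated with the citation ``cf.\ Chenais'' and used as a black box, so there is no in-paper argument to compare against. Your sketch is precisely the classical construction behind that citation -- uniform cone parameters $(\theta,h)$ give uniformly Lipschitz local graph representations with chart radius and Lipschitz constant depending only on $(\theta,h)$, a covering number bounded via the boundedness of $D$, a partition of unity with uniform bounds, local extensions, and summation -- and it is correct in outline. Two small refinements: for $m\ge 2$ a plain higher-order (Hestenes-type) reflection across a merely Lipschitz graph does not preserve $H^m$, since composing with a Lipschitz boundary-straightening map loses regularity beyond $H^1$, so one should invoke Calder\'on's or Stein's extension operator, whose norm is again controlled by the Lipschitz character only (for the paper's purposes only $m\in\{0,1\}$ is used, where simple reflection suffices); and rather than cutting off near $\partial D$ -- which could spoil the extension of functions that do not vanish there, recall $\Gamma_{\rm out}\subset\partial D$ -- it is cleaner to extend to $H^m(\mathbb{R}^2)^d$ and then restrict to $D$, the restriction having norm one, which yields $\mathcal{E}_\Omega^d:H^m(\Omega)^d\to H^m(D)^d$ with $\|\mathcal{E}_\Omega^d\|\le K$ uniformly over $\Oad$ as claimed.
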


This lemma will be utilized in several occasions, for example when we show that the \text{domain-to-state} map is continuous.
Meanwhile, the compactness of $\Oad$ follows from the fact that it is closed and relatively compact -- as defined by D. Chenais \cite{chenais1975} -- with respect to the $weak^*$-$L^\infty$ topology on $\mathcal{U}_{ad}:= \{\chi_\Omega:\Omega\in \Oad \}$. One can also read upon the proof in \cite[Proposition 2.4.10]{henrot2018}. We shall not discuss the proof of such properties, nevertheless they are summarized on the lemma below.
\begin{lmm}
The set $\Oad$ is compact with respect to the topology on $\mathcal{U}_{ad}$.
\label{le:comome}
\end{lmm}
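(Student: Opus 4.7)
The plan is to extract from any sequence $\{\Omega_n\}\subset\Oad$ a subsequence whose characteristic functions converge in the weak-$*$ topology of $L^\infty(D)$ to the characteristic function of some admissible $\Omega$. Since $\{\chi_{\Omega_n}\}$ is uniformly bounded in $L^\infty(D)$ by $1$, the Banach--Alaoglu theorem immediately furnishes a subsequence (not relabeled) and some $u\in L^\infty(D)$ with $0\le u\le 1$ almost everywhere such that $\chi_{\Omega_n}\ws u$ in $L^\infty(D)$. The crux is that a priori $u$ need not be a characteristic function, nor is it clear that any candidate limit set would remain in $\Oad$.

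Next I would exploit the uniform cone property shared by all elements of $\Oad$. As established by Chenais \cite{chenais1975}, up to extracting a further subsequence the sets $\Omega_n$ converge in the Hausdorff sense to a set $\Omega\subset\overline{D}$ that inherits the cone property with the same parameters $(\xi,\theta,h)$. Under this uniform cone condition the Hausdorff convergence upgrades to strong convergence $\chi_{\Omega_n}\to\chi_\Omega$ in $L^p_{\mathrm{loc}}(\mathbb{R}^2)$ for every $1\le p<\infty$; the relevant arguments are collected, for instance, in \cite[Chapter~2]{henrot2018} and \cite[Chapter~5]{zolesio2011}. The strong and weak-$*$ limits must coincide, so $u=\chi_\Omega$ almost everywhere; in particular $u\in\mathcal{U}_{ad}$.

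To close the argument I would check that $\Omega$ itself belongs to $\Oad$. The cone property of $\Omega$ is part of the Chenais convergence statement, and the volume constraint is preserved because strong $L^1(D)$-convergence of characteristic functions gives
\[
m=|\Omega_n|=\int_D\chi_{\Omega_n}\du x\longrightarrow\int_D\chi_\Omega\du x=|\Omega|,
\]
so $|\Omega|=m$. The main obstacle, and the step I would rely on the literature for, is the second paragraph: transferring the cone property to the Hausdorff limit and promoting the Hausdorff convergence to strong $L^p_{\mathrm{loc}}$ convergence of the characteristic functions. Without the uniform cone condition the weak-$*$ limit could well take intermediate values rather than $\{0,1\}$, and the candidate limit set could suffer topological collapse; it is precisely this condition that rules out both pathologies and reconciles the three notions of convergence (weak-$*$ of characteristic functions, Hausdorff of sets, strong $L^p_{\mathrm{loc}}$ of characteristic functions). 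Once these classical facts are invoked, the compactness of $\Oad$ in the topology of $\mathcal{U}_{ad}$ is obtained by merely chaining the convergences together.
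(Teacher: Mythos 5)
Your argument is correct and is essentially the route the paper itself takes: the paper does not prove this lemma in detail but defers precisely to Chenais \cite{chenais1975} and to \cite[Proposition 2.4.10]{henrot2018}, i.e.\ to the classical fact that a uniform cone condition makes the class closed and relatively compact, with Hausdorff convergence of the domains and strong $L^p_{\mathrm{loc}}$ convergence of the characteristic functions coinciding with the weak-$*$ limit, so that the limit is again a characteristic function and the volume constraint passes to the limit. Your sketch simply spells out the standard chain of extractions behind those citations, so it matches the paper's approach.
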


Aside from these properties, it is noteworthy to mention the set of admissible domains can be identified as Lipschitzian domains as well. Since we only consider the hold-all domain $D$ to be possessing a bounded boundary, a proof of this property can be found in Henrot, A. and Pierre, M. \cite[Theorem 2.4.7]{henrot2018}.

\subsection{Well-posedness of the optimization problem}

Now that we have defined a good topology on the set of admissible domains, this subsection is dedicated to establishing the existence of the solution to the shape optimization problem. 

Recall that the functional $J$ is written as a function of the state solution $\bu$ and of the domain $\Omega$. Fortunately, we point out that for each $\Omega\in\Oad$ there exists a solution $\bu\in\bV(\Omega)$ to the weak formulation \eqref{weak}, hence the map $\Omega\mapsto\bu$.
This implies that the well-posedness of the optimization will depend on the continuity of the domain-to-state map, which we shall briefly establish shortly. For now, let us consider the velocity-pressure formulation of \eqref{weak} given by finding $(\tilde{\bu},p)\in H_{\Gamma_0}^1(\Omega)^2\times L^2(\Omega)$ such that
\begin{align}
	\left\{
		\begin{aligned}
			\mathbb{A}(\tilde{\bu};\tilde{\bu},{\bphi}) - \frac{1}{2}\mathbb{C}(\tilde{\bu};\tilde{\bu},{\bphi}) + d({\bphi},p)_\Omega =& \langle \Phi, {\bphi}\rangle_{[H^1_{\Gamma_0}(\Omega)^2]^*\times H^1_{\Gamma_0}(\Omega)^2} &&\forall{\bphi}\in H_{\Gamma_0}^1(\Omega)^2,\\
			d(\tilde{\bu},q)_\Omega =& 0&& \forall q\in L^2(\Omega),
		\end{aligned}
	\right.\label{weak:vp}
\end{align}
where $d(\cdot,\cdot)_\Omega:H_{\Gamma_0}^1(\Omega)^2\times L^2(\Omega)\to \mathbb{R}$ is defined as $d(\tilde{\bu},p)_\Omega = -(p,\dive\tilde{\bu})_{\Omega}$.
The existence of the pair $(\tilde{\bu},p)\in H_{\Gamma_0}^2(\Omega)^2\times L^2(\Omega)$ that satisfies \eqref{weak:vp} follows from the existence of solution to \eqref{weak} and since $d(\cdot,\cdot)_\Omega$ satisifies the inf-sup condition \cite{bertoluzza2017}. Furthermore, the following energy estimate holds
\begin{align}
	\|\tilde{\bu} \|_{H_{\Gamma_0}^1(\Omega)^2} + \|p\|_{L^2(\Omega)} \le c(\|{\blf}\|_{L^2(D)^2} + (1+\|{\bg}\|_{H^1(D)^2})\|{\bg}\|_{H^1(D)^2}).
	\label{energy:vp}
\end{align}

The impetus for introducing the variational equation \eqref{weak:vp} is to make sure that the divergence-free property of the states will be preserved on any domain in $\Oad$ and so that the said property will still be reflected when the uniform extension property in Lemma \ref{lemmal:ubp} is utilized.

\begin{prpstn}
Let $\{\Omega_n\}\subset\Oad$ be a sequence that converges to $\Omega\in\Oad$. Suppose that for each $\Omega_n$, $(\tilde{\bu}_n,p_n)\in H_{\Gamma_0}^1(\Omega_n)^2\times L^2(\Omega_n)$ is a solution of the variational equation \eqref{weak:vp} on the respective domain; then the extensions $(\overline{\bu}_n,\overline{p}_n):=(\mathcal{E}_{\Omega_n}^2\tilde\bu_n, \mathcal{E}_{\Omega_n}^1p_n )\in H^1(D)^2\times L^2(D)$ coverges to a state $(\overline{\bu},\overline{p} )\in H^1(D)^2\times L^2(D)$, such that $(\tilde\bu,p )=(\overline{\bu},\overline{p} )\big|_{\Omega}$ is a solution to \eqref{weak:vp} in $\Omega$.
\label{contu}
\end{prpstn}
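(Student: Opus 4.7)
The plan is to combine the uniform energy estimate \eqref{energy:vp} with the uniform continuity of the extension operators from Lemma \ref{lemmal:ubp} to bound $(\overline{\bu}_n, \overline{p}_n)$ in $H^1(D)^2 \times L^2(D)$ independently of $n$. Reflexivity then yields a (non-relabelled) subsequence satisfying $\overline{\bu}_n \rightharpoonup \overline{\bu}$ in $H^1(D)^2$ and $\overline{p}_n \rightharpoonup \overline{p}$ in $L^2(D)$, and since $\partial D$ is Lipschitz the Rellich--Kondrachov theorem upgrades this to strong convergence of $\overline{\bu}_n$ in $L^q(D)^2$ and $L^q(\partial D)^2$ for every $q \ge 2$. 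Setting $(\tilde{\bu}, p) := (\overline{\bu}, \overline{p})\big|_\Omega$, what remains is to verify that this pair solves \eqref{weak:vp} on $\Omega$.

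The identification proceeds by testing with functions in the dense subclass $C_c^\infty(\Omega)^2$. Compactness of the support together with the convergence $\chi_{\Omega_n} \ws \chi_\Omega$ in $L^\infty(D)$ implies that for every such $\bphi$, $\mathrm{supp}\,\bphi \subset \Omega_n$ for all sufficiently large $n$; hence $\bphi$ is admissible in the $n$-th equation. I would rewrite each term as an integral over $D$ with the appropriate characteristic function and then pass to the limit: the linear pieces converge by the weak convergences of $\overline{\bu}_n$ and $\overline{p}_n$; the convective term $b(\tilde{\bu}_n;\tilde{\bu}_n,\bphi)_{\Omega_n}$ is handled by combining the strong $L^4(D)^2$ convergence of $\overline{\bu}_n$ with the weak $L^2$ convergence of $\nabla \overline{\bu}_n$; and the nonlinear outflow term $\mathbb{C}(\tilde{\bu}_n;\tilde{\bu}_n,\bphi)$, being posed on the fixed piece $\Gamma_{\rm out} \subset \partial D$, is treated exactly as in the closing computation of Theorem \ref{th:wp} using the strong trace convergence on $\Gamma_{\rm out}$. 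The divergence equation is passed to the limit analogously: for $\psi \in C_c^\infty(\Omega)$ supported in $\Omega_n$ for all large $n$, the identity $d(\tilde{\bu}_n, \psi)_{\Omega_n} = 0$ passes to $d(\tilde{\bu}, \psi)_\Omega = 0$ by the strong $L^2$ convergence, and a density argument finishes the identification.

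The boundary trace $\tilde{\bu} = \bg$ on $\Gamma_0$ is verified by splitting $\Gamma_0$: the portions $\Gamma_{\rm in} \cup \Gamma_{\rm w}$ lie on the fixed outer rectangle $\partial D$, so continuity of the trace operator combined with the weak $H^1(D)^2$ convergence of $\overline{\bu}_n$ transfers the Dirichlet data to the limit; the free piece $\Gamma_{\rm f}$ is handled through the characteristic-function convergence together with $\tilde{\bu}_n = 0$ on $\partial \Omega_n \setminus \Gamma_{\rm out}$ and the strong $L^2(D)^2$ convergence of the extensions in a tubular neighbourhood of $\Gamma_{\rm f}$.

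The main obstacle I anticipate is the passage to the limit of the quadratic terms on moving domains: neither $b$ nor $\mathbb{C}$ is natively defined on $D$, so every estimate hinges on transferring the problem to the fixed hold-all domain via the uniform extension of Lemma \ref{lemmal:ubp}, which is precisely what makes Rellich--Kondrachov compactness applicable uniformly in $n$. A closely related subtlety is that the extension operator does not preserve incompressibility, which is exactly why the velocity--pressure formulation \eqref{weak:vp} is the correct setting here: the constraint $\dive\tilde{\bu} = 0$ is recovered only after restricting the limit back to $\Omega$, where the test functions $\psi$ live. Under the smallness assumption \eqref{est:uniqueness} the limit is unique and the whole sequence (not merely a subsequence) converges.
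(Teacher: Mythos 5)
Your first half (uniform bound via Lemma \ref{lemmal:ubp} and \eqref{energy:vp}, weak convergence in $H^1(D)^2\times L^2(D)$, Rellich--Kondrachov) coincides with the paper. The identification step, however, has a genuine gap, and it is twofold. First, the claim that $\chi_{\Omega_n}\ws\chi_{\Omega}$ in $L^\infty(D)$ forces $\mathrm{supp}\,\bphi\subset\Omega_n$ for large $n$ is false for weak-$*$ convergence of characteristic functions alone: one can perforate $\Omega_n$ by sets of vanishing measure meeting any fixed compact subset of $\Omega$ without disturbing the weak-$*$ limit. In the class $\Oad$ this can be repaired, because for domains with the uniform cone property convergence of characteristic functions is equivalent to convergence in the sense of compact sets (Chenais; Henrot--Pierre), but that equivalence must be invoked explicitly --- it is not a consequence of the weak-$*$ convergence you cite.

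Second, and more seriously, testing with $\bphi\in C_c^\infty(\Omega)^2$ cannot identify the limit as a solution of \eqref{weak:vp}. Such $\bphi$ vanish on $\Gamma_{\rm out}$, so the outflow term $\mathbb{C}$ and the $\Gamma_{\rm out}$ part of $\Phi$ drop out identically; your statement that this term is ``treated exactly as in the closing computation of Theorem \ref{th:wp}'' is vacuous for this test class. The concluding density argument does not rescue this: $C_c^\infty(\Omega)^2$ is not dense in $H^1_{\Gamma_0}(\Omega)^2$, whose elements may have nonzero trace on $\Gamma_{\rm out}$, and recovering the boundary terms from the interior (distributional) equations would require integration by parts and hence regularity of the limit that you have not established. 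This is precisely why the paper argues differently: it rewrites the variational problem on the hold-all domain $D$ using the characteristic functions $\chi_{\Omega_n}$, with test functions $(\bpsi,\phi)\in H^1(D)^2\times L^2(D)$ that need not vanish on $\Gamma_{\rm out}\subset\partial D$, passes to the limit there (so the $\Gamma_{\rm out}$ integrals survive and are handled by the trace compactness as in Theorem \ref{th:wp}), and only afterwards restricts to $\Omega$ by choosing test functions equal to a given $\bphi\in H^1_{\Gamma_0}(\Omega)^2$ in $\Omega$ and zero in $D\setminus\overline{\Omega}$. Your closing observations (non-preservation of incompressibility by the extension, uniqueness of the limit under \eqref{est:uniqueness}, and the need to check the vanishing trace of the limit on $\Gamma_{\rm f}$) are sound, but the core identification as written does not deliver \eqref{weak:vp}.
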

\begin{proof}
From the uniform extension property, there exists $K>0$ such that 
\[\|\overline{\bu}_n\|_{H^1(D)^2} + \|\overline{p}_n \|_{L^2(D)} \le K(\|\tilde\bu_n\|_{H_{\Gamma_0}^1(\Omega_n)^2} + \|p_n \|_{L^2(\Omega_n)})\text{ for all }\Omega_n.\] Furthermore, from the energy estimate \eqref{energy:vp}
\[\|\tilde{\bu}_n \|_{H_{\Gamma_0}^1(\Omega_n)^2} + \|p_n\|_{L^2(\Omega_n)} \le c(\|{\blf}\|_{L^2(D)^2} + (1+\|{\bg}\|_{H^1(D)^2})\|{\bg}\|_{H^1(D)^2}).\] 
From the uniform boundedness of $\{(\overline\bu_n,\overline{p}_n )\}$ in $H^1(D)^2\times L^2(D)$ and by the virtue of the Rellich-Kondrachov and Banach-Alaoglu theorems, there exists a subsequence of $\{(\overline\bu_n,\overline{p}_n )\}$, which we denote in the same manner, and an element $(\overline{\bu},\overline{p} )\in H^1(D)^2\times L^2(D)$ such that 
\begin{align}
\left\{
\begin{aligned}
	&\overline{\bu}_n \rightharpoonup \overline{\bu} &&\text{in }H^1(D)^2,\\
	&\overline{\bu}_n \to \overline{\bu}	&&\text{in }L^2(D)^2,\\
	&\overline{p}_n \rightharpoonup \overline{p}			&&\text{in }L^2(D).
\end{aligned}\right.\label{solcon}
\end{align} 

\noindent\textit{Passing through the limit.}
The next step is to show that $(\tilde\bu,p )$ solves \eqref{weak:vp} in $\Omega$. 

By the assumed domain convergence, we have $\chi_n:=\chi_{\Omega_n}\ws \chi:=\chi_\Omega$ in $L^\infty(D)$. Furthermore, since $(\tilde{\bu}_n,p_n)$ solves \eqref{weak:vp} in $\Omega_n$, then for any $(\bpsi,\phi )\in H^1(D)^2\times L^2(D).$
\begin{equation}
	\left\{
	\begin{aligned}
	\mathcal{A}_{\chi_n}(\overline{\bu}_n;\overline{\bu}_n,\bpsi)_D - \frac{1}{2}\mathbb{C}(\overline{\bu}_n;\overline{\bu}_n,\bpsi) + d({\bpsi},\chi_n \overline{p}_n)_D  & = \langle \Phi_{\chi_n},{\bpsi}\rangle_{[H^1(D)^2]^*\times H^1(D)^2},\\
	d(\overline{\bu}_n,\chi_n \phi)_D & = 0.
	\end{aligned}
	\right.
\label{weakDn}
\end{equation}
Here, for any function $\vartheta: D\to \mathbb{R}$, the trilinear form $\mathcal{A}_{\vartheta}(\cdot,\cdot,\cdot)_D$ can be dissected into several components, namely 
\[ \mathcal{A}_{\vartheta}(\bw;\bu,\bv)_D = a_\vartheta(\bu,\bv)_D+ b(\vartheta\bw;\bu,\bv)_D+b(\vartheta\bg;\bu,\bv)_D+b(\vartheta\bw;\bg,\bv)_D, \]
where $a_\vartheta(\bu,\bv)_D = \int_{D}\vartheta\nabla\bu:\nabla\bv\du x $, and $\Phi_\vartheta \in [H^1(D)^2]^*$ is defined as 
\begin{align*}
	\langle \Phi_\vartheta, {\bpsi}\rangle_{[H^1(D)^2]^*\times H^1(D)^2} = (\vartheta[{\blf}-({\bg}\cdot\nabla{\bg})],{\bpsi})_{D} + \nu(\vartheta\nabla{\bg},\nabla{\bpsi})_{D} + \frac{1}{2}({\bg}\cdot{\bn},{\bg}\cdot{\bpsi})_{\Gamma_{\rm out}}.
\end{align*}

Our goal is to show that $(\tilde\bu,p )=(\overline{\bu},\overline{p} )\big|_{\Omega}$ is a solution to \eqref{weak:vp} by establishing that the following system holds
\begin{equation}
	\left\{
	\begin{aligned}
	\mathcal{A}_{\chi}(\overline{\bu};\overline{\bu},\bpsi)_D -  \frac{1}{2}\mathbb{C}(\overline{\bu};\overline{\bu},\bpsi)  + d({\bpsi},\chi \overline{p})_D & = \langle \Phi_{\chi},{\bpsi}\rangle_{[H^1(D)^2]^*\times H^1(D)^2},\\
	d(\overline{\bu},\chi \phi)_D & = 0.
	\end{aligned}
	\right.
\label{weakD}
\end{equation}
by utilizing \eqref{solcon} and the weak-$^*$ limit of the characteristic functions on \eqref{weakDn}. 

Using the same arguments as in Theorem \ref{th:wp} it can be easily shown that 
\[ 
\mathcal{A}_{\chi_n}(\overline{\bu}_n;\overline{\bu}_n,\bpsi)_D - \frac{1}{2}\mathbb{C}(\overline{\bu}_n;\overline{\bu}_n,\bpsi) \to \mathcal{A}_{\chi}(\overline{\bu};\overline{\bu},\bpsi)_D - \frac{1}{2}\mathbb{C}(\overline{\bu};\overline{\bu},\bpsi).
\]
Furthermore, from the assumed convergence of the characteristic functions we infer that 
\[\langle \Phi_{\chi_n},{\bpsi}\rangle_{[H^1(D)^2]^*\times H^1(D)^2} \to \langle \Phi_{\chi},{\bpsi}\rangle_{[H^1(D)^2]^*\times H^1(D)^2}.\]
Lastly, since $\overline{\bu}_n \rightharpoonup \overline{\bu}$ in $H^1(D)^2$ and  $\overline{p}_n \rightharpoonup \overline{p}$ in $L^2(D)$, then
\begin{align*}
	\left\{
		\begin{aligned}
			d({\bpsi},\chi_n \overline{p}_n)_D \to d({\bpsi},\chi \overline{p})_D,\\
			d(\overline{\bu}_n,\chi_n \phi)_D \to d(\overline{\bu},\chi \phi)_D.
		\end{aligned}
	\right.
\end{align*}

By letting $({\bphi},q)\in H_{\Gamma_0}^1(\Omega_n)^2\times L^2(\Omega_n)$, and defining $({\bpsi},\phi)\in H^1(D)^2\times L^2(D)$ by $({\bpsi},\phi) = ({\bphi},p)$ in $\Omega$ and $({\bpsi},\phi)=(0,0)$ in $D\backslash\overline{\Omega}$, we get the variational equation \eqref{weak:vp} in $\Omega$.
\end{proof}

Just to reiterate, Proposition \ref{contu} proves that the map $\Omega \mapsto (\tilde{\bu},p)$ is continuous. This property will be instrumental to prove that the optimal shape exists. In fact, we shall use the fact that we can write the objective functional as a function that depends solely on the elements of $\Oad$, and show that it is continuous with respect to this collection as well. To be precise, we prove the existence of a solution to the shape optimization problem on the theorem below. Before we begin, let us introduce the following notations which were made possible from the well-definedness of the map $\Omega\mapsto{\bu}$:
\begin{align*}
	\mathcal{G}(\Omega):= \mathcal{G}(\mathcal{\bu},\Omega),\quad J(\Omega):=J({\bu},\Omega).
\end{align*}

\begin{thrm}
Suppose that the assumptions for Theorem \ref{th:wp} together with the uniqueness assumption \eqref{est:uniqueness} hold; then, there exists $\Omega^*\in\Oad$ such that 
\[ \mathcal{G}(\Omega^*) = \min_{\Omega\in\Oad}\mathcal{G}(\Omega).\]
\end{thrm}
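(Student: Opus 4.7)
I would apply the direct method in the calculus of variations. The energy estimate in Theorem~\ref{th:wp}, together with its extension to the hold-all domain noted in the subsequent remark, implies that $\|\nabla\bu\|_{L^2(\Omega)^{2\times2}}$ is uniformly bounded in $\Omega\in\Oad$; hence $J(\Omega) = -\tfrac{\gamma}{2}\|\nabla\times\bu\|_{L^2(\Omega)}^2$ is uniformly bounded below. Since $P(\Omega)\ge 0$, the infimum $\mu := \inf_{\Omega\in\Oad}\mathcal{G}(\Omega)$ is finite. Take a minimizing sequence $\{\Omega_n\}\subset\Oad$ with $\mathcal{G}(\Omega_n)\to\mu$. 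By Lemma~\ref{le:comome}, after passing to a subsequence (not relabeled), $\chi_{\Omega_n}\ws\chi_{\Omega^*}$ in $L^\infty(D)$ for some $\Omega^*\in\Oad$. Proposition~\ref{contu} then supplies an $(\overline{\bu},\overline{p})\in H^1(D)^2\times L^2(D)$, limit of further subsequences of the extensions $(\overline{\bu}_n,\overline{p}_n)$, whose restriction to $\Omega^*$ solves the velocity--pressure formulation \eqref{weak:vp} on $\Omega^*$; by \eqref{est:uniqueness} this restriction coincides with the unique state $(\tilde{\bu}^*,p^*)$.

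The heart of the argument is showing $\mathcal{G}(\Omega^*)\le\liminf_n \mathcal{G}(\Omega_n)$. For the perimeter term I would invoke the classical lower semicontinuity of $P$ with respect to $L^1_{\mathrm{loc}}$-convergence of characteristic functions (De Giorgi), valid here because the convergence $\chi_{\Omega_n}\ws\chi_{\Omega^*}$ on $\Oad$ actually takes place in $L^p_{\mathrm{loc}}(D)$ for every finite $p$, as recorded in the remark following the definition of $\Oad$. This yields $P(\Omega^*)\le\liminf_n P(\Omega_n)$.

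The genuinely delicate point, and the main obstacle, is the vorticity term: because $J$ carries a negative sign, weak $H^1$-convergence of $\overline{\bu}_n$ would only give \emph{upper} semicontinuity of $J$, not the lower semicontinuity needed for minimization. I would overcome this by upgrading the convergence to strong convergence $\overline{\bu}_n\to\overline{\bu}$ in $H^1(D)^2$ via a PDE argument. Testing \eqref{weak:vp} on $\Omega_n$ with $\tilde{\bu}_n$ itself and using Lemma~\ref{propb} gives a norm identity of the form
\begin{equation*}
\nu\,a(\tilde{\bu}_n,\tilde{\bu}_n)_{\Omega_n} = \langle\Phi,\tilde{\bu}_n\rangle - b(\tilde{\bu}_n;\bg,\tilde{\bu}_n)_{\Omega_n} + \tfrac{1}{2}(\tilde{\bu}_n\cdot\bn,\tilde{\bu}_n\cdot\bg)_{\Gamma_{\rm out}}.
\end{equation*}
Rewriting each term in terms of $\chi_{\Omega_n}$ and the extensions, the right-hand side involves products of a weakly-$\ast$ converging characteristic function with quantities that are strongly convergent in $L^2(D)^2$ (by Rellich--Kondrachov compactness on $H^1(D)^2$) and with boundary traces on $\Gamma_{\rm out}\subset\partial D$ that converge thanks to the compact embedding $H^1(D)\hookrightarrow L^q(\Gamma_{\rm out})$ for $q\ge 2$ used already in Theorem~\ref{th:wp}. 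Passing to the limit then forces $\|\nabla\overline{\bu}_n\|_{L^2(\Omega_n)}\to\|\nabla\overline{\bu}\|_{L^2(\Omega^*)}$, i.e.\ norm convergence, which, combined with weak convergence, upgrades to strong convergence in $H^1$. Consequently $|\nabla\times\overline{\bu}_n|^2\to|\nabla\times\overline{\bu}|^2$ strongly in $L^1(D)$, and coupling this with $\chi_{\Omega_n}\ws\chi_{\Omega^*}$ in $L^\infty(D)$ gives $J(\Omega_n)\to J(\Omega^*)$.

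Putting the two estimates together yields $\mathcal{G}(\Omega^*)\le\liminf_n \mathcal{G}(\Omega_n)=\mu$, and since $\Omega^*\in\Oad$ we conclude $\mathcal{G}(\Omega^*)=\mu$, so $\Omega^*$ is a global minimizer. The main difficulty, as stressed above, is obtaining the strong $H^1$-convergence of the extended states needed to circumvent the wrong-sign semicontinuity of the vortex functional; this is where the specific structure of the convective outflow condition \eqref{conboundcond}, through the compactness of the $\Gamma_{\rm out}$-trace, plays a crucial role.
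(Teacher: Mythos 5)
Your proposal is correct and follows the same direct-method skeleton as the paper: uniform boundedness of $J$ from the energy estimate extended to $D$, a minimizing sequence, compactness of $\Oad$ in the characteristic-function topology (Lemma~\ref{le:comome}), identification of the limit state via Proposition~\ref{contu}, and lower semicontinuity of the perimeter. Where you genuinely depart from the paper is at the vorticity term. The paper's Step~1 disposes of it by combining the estimate \eqref{est:j} with Proposition~\ref{contu} and asserting that $\Omega\mapsto J(\Omega)$ is continuous; strictly speaking, Proposition~\ref{contu} only provides weak $H^1(D)$ convergence of the extended states, under which the squared curl norm is merely weakly lower semicontinuous -- the unhelpful direction, given the sign with which it enters $\mathcal{G}$. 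You identify exactly this as the delicate point and close it by testing \eqref{weak:vp} on $\Omega_n$ with $\tilde{\bu}_n$, passing to the limit in the resulting energy identity (using Rellich--Kondrachov in $D$, the compact trace embedding on $\Gamma_{\rm out}\subset\partial D$, and the $L^p_{\rm loc}$ convergence of $\chi_{\Omega_n}$), and comparing with the same identity for the limit state on $\Omega^*$; norm convergence plus weak convergence then gives the strong convergence needed for genuine continuity of $J$. This is a real strengthening of the paper's argument at its one loose step, and it is the standard way to make that step rigorous.

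One small imprecision: what your energy argument yields is $\|\nabla\tilde{\bu}_n\|_{L^2(\Omega_n)}\to\|\nabla\tilde{\bu}\|_{L^2(\Omega^*)}$, which together with the weak convergence $\chi_{\Omega_n}\nabla\overline{\bu}_n\rightharpoonup\chi_{\Omega^*}\nabla\overline{\bu}$ upgrades to strong convergence of $\chi_{\Omega_n}\nabla\overline{\bu}_n$ in $L^2(D)^{2\times 2}$ -- not strong $H^1(D)^2$ convergence of the extensions themselves, since the extension operator gives no control of $\overline{\bu}_n$ outside $\Omega_n$. Accordingly, the last step should read $\chi_{\Omega_n}|\nabla\times{\bu}_n|^2\to\chi_{\Omega^*}|\nabla\times{\bu}|^2$ in $L^1(D)$ (the cross terms with the fixed extension $\bg$ being handled by the same strong $L^2$ convergence), which is all that is needed for $J(\Omega_n)\to J(\Omega^*)$; coupling once more with $\chi_{\Omega_n}\ws\chi_{\Omega^*}$ is then redundant. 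With that rephrasing the argument is complete.
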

\begin{proof}
First, we show that the objective functional is lower semicontinuous. This is done by showing that the component $J$ is continuous (hence upper-semicontinuous) with respect to the state variable ${\bu} = \tilde{\bu} + {\bg}$, and by using the lower-semicontinuity of the perimeter functional \cite[Proposition 2.3.7]{henrot2018}.

We shall then show that $\mathcal{G}$ is bounded from below, which will imply the existence of a minimizing sequence of domains whose evaluations will converge to an infimum value of the objective functional. By using the compactness of $\Oad$, we shall then show that this sequence converges to a domain such that its evaluation coincides with the infimum.

\noindent {\it Step 1:} Lower Semicontinuity of $\mathcal{G}$.

Note that we can estimate $J$ by the $H^1$ norm of the state $\mathcal{\bu}$ by the virtue of the following computation, which implies the continuity of $J$ with respect to the state variable ${\bu}$:
\begin{align}
J(\Omega) & = \frac{\gamma}{2}\|\nabla\times\bu\|_{L^2(\Omega)}^2 = \frac{\gamma}{2}\int_{\Omega} |\nabla\times\bu|^2\du x \le  \frac{\gamma}{2}\int_{\Omega} |\nabla\bu|^2\du x \le  \frac{\gamma}{2}\|\bu\|^2_{H^1(\Omega)^2}.
\label{est:j}
\end{align}
Furthermore,  since Proposition \ref{contu} implies that the map $\Omega \mapsto \tilde{\bu}+\bg$ is continuous, the map $\Omega\mapsto J(\Omega)$ is also continuous, and hence upper-semicontinuous. Thus, the map $\Omega \mapsto \mathcal{G}(\Omega)$ is lower-semicontinuous, i.e., for any sequence $\{\Omega_n\}_n\subset\Oad$ that converges to an element $\Omega^*\in\Oad$, then
\begin{align*}
	\mathcal{G}(\Omega^*) \le \liminf_{n\to\infty}\mathcal{G}(\Omega_n).
\end{align*}

\noindent {\it Step 2:} Existence of a minimizing sequence.

Firstly, by using estimate \eqref{est:j}, we can show that $J$ is uniformly bounded. Indeed, from \eqref{est:j} and the energy estimate \eqref{energy}, we get 
\begin{align}
	\begin{aligned}
	J(\Omega) 	& \le \frac{\gamma}{2}\|\bu\|^2_{H^1(\Omega)^2} = \frac{\gamma}{2}\|\tilde{\bu} + {\bg}\|^2_{H^1(\Omega)^2}\\
						& \le \frac{\gamma}{2}(\|\tilde{\bu}\|^2_{H^1(\Omega)^2} +\|{\bg}\|^2_{H^1(\Omega)^2} )\\
						& \le c(\|{\blf}\|_{L^2(D)^2} + (2+\|{\bg}\|_{H^1(D)^2})\|{\bg}\|_{H^1(D)^2}).
	\end{aligned}
\end{align}
Furthermore, since any domain $\Omega\in\Oad$ is bounded, then the inner boundary $\Gamma_{\rm f}$ is also bounded and that its perimeter can be bounded below uniformly, say $\alpha P(\Omega) \ge P^*$ for any $\Omega\in\Oad$. Therefore, $\mathcal{G}$ is bounded from below, i.e., for any $\Omega\in\Oad$
\begin{align*}
	\mathcal{G}(\Omega) = \alpha P(\Omega) - J(\Omega) \ge P^* - c(\|{\blf}\|_{L^2(D)^2} + (2+\|{\bg}\|_{H^1(D)^2})\|{\bg}\|_{H^1(D)^2}).
\end{align*}
Hence, there exists a sequence $\{\Omega_n\}_n\subset\Oad$ such that 
\begin{align*}
	\mathcal{G}^* := \inf_{\Omega\in\Oad} \mathcal{G}(\Omega_n) = \lim_{n\to\infty}\mathcal{G}(\Omega_n).
\end{align*}

\noindent {\it Step 3:} Existence of a minimizer for $\mathcal{G}$.

Since $\Oad$ is compact, then the sequence $\{\Omega_n\}_n$ from {\it Step 2} has a subsequence -- which we shall denote similarly -- that converges to an element $\Omega^*\in\Oad$. Hence, from the lower-semicontinuity of $\mathcal{G}$, we establish that $\Omega^*$ is the minimizer of $\mathcal{G}$:
\begin{align*}
	\mathcal{G}(\Omega^*) \le \liminf_{n\to\infty}\mathcal{G}(\Omega_n) = \mathcal{G}^*.
\end{align*}

\end{proof}

\section{Shape sensitivity analysis}\label{sect4} 
This section is dedicated to investigate the sensitivity of the objective functional $\mathcal{G}$ with respect to domain variations. We start this section by introducing the identity perturbation method, where we consider domain variations generated by a given autonomous velocity.
\subsection{Identity perturbation}
We consider a family of autonomous velocity fields $\bth$ belonging to $\bTh := \{\bth\in C^{1,1}(\overline{D};\mathbb{R}^2): \bth = 0 \text{ on }\partial D\cup\Gamma_{\rm in}\cup\Gamma_{\rm w}\}$. From an element $\bth\in\bTh$ we can define an identity perturbation operator $T_t:\overline{D}\to\mathbb{R}^2$ defined by $T_t(x) = x + t\bth(x).$ We note that this operator can be shown as a direct consequence of the velocity method as discussed in \cite{sokolowski1992,henrot2018}.

A given domain $\Omega\subset D$ is perturbed by means of the identity perturbation operator so that for some $t_0:=t_0(\bth)>0$ we get the family of perturbed domains $\{\Omega_t: 0<t<t_0 \}$ with $\Omega_t: = T_t(\Omega)$.  The parameter $t_0>0$ is chosen so that for any $t\in(0,t_0)$,  $\mathrm{det}\nabla T_t >0$ and $J_tM_t(M_t^\top)$ is coercive, i.e., for some $0<\alpha_1 <\alpha_2$
  \[
  	\alpha_1|\xi|^2 \le [J_tM_t(M_t^\top)]\xi\cdot\xi\le \alpha_2|\xi|^2\quad \forall\xi\in \mathbb{R}^{2},
  \]
  where $J_t = \mathrm{det}\nabla T_t$,  $M_t(x) = (\nabla T_t(x))^{-1}$,  and $\nabla T_t$ denotes the Jacobian matrix of the operator $T_t$.

By the definition of $\bth$, $\bth\equiv 0$ on $\Gamma_{\rm out}$, $\Gamma_{\rm in}$, and $\Gamma_{\rm w}$, this implies that these boundaries are part of the perturbed domains $\Omega_t$. To be precise, we have
\[ \partial\Omega_t = \Gamma_{\rm out}\cup\Gamma_{\rm in}\cup\Gamma_{\rm w}\cup T_t(\Gamma_{\rm f}). \]
Additionally, a domain that has at most $C^{1,1}$ regularity preserves its said regularity with this transformation, this means that for $0\le t\le t_0$, $\Omega_t$ has $C^{1,1}$ regularity given that the initial domain $\Omega$ is a $C^{1,1}$ domain. 

Before we move further in this exposition, let us look at some vital properties of $T_t$. 

\begin{lmm}[cf \cite{zolesio2011,sokolowski1992}]
Let $\bth\in\bTh$, then for sufficiently small $t_0>0$, the identity perturbation operator $T_t$ satisfies the following properties:
\begin{itemize}
	\item[$\bullet$] $[t\mapsto T_t]\in C^1([0,t_0];C^{2,1}(\overline{D},\mathbb{R}^2));\ \quad\hspace{-.15in} \bullet\ [t\mapsto T_t^{-1}]\in C([0,t_0];C^{2,1}(\overline{D},\mathbb{R}^2));$
	\item[$\bullet$] $[t\mapsto J_t]\in C^1([0,t_0];C^{1,1}(\overline{D}));\quad\qquad\hspace{-.15in}\! \bullet\ M_t,M_t^\top\in C^{1,1}(\overline{D},\mathbb{R}^{2\times2});$
	\item[$\bullet$] $\frac{d}{dt}J_t\big|_{t=0} = \dive\bth;\hspace{-.15in}\quad\qquad\qquad\qquad\quad\ \bullet\ \frac{d}{dt}M_t\big|_{t=0} = -\nabla\bth.$
\end{itemize}
\label{Tprops}
\end{lmm}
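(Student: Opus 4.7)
The plan is to verify each of the six assertions by direct computation, using only that $T_t(x) = x + t\bth(x)$ is affine in $t$, that inversion and determinant are smooth nonlinear operations on the open set of matrices with positive determinant, and the standard inverse function theorem together with Jacobi's formula. Throughout, $t_0>0$ is chosen small enough that $\|t\nabla\bth\|_{L^\infty(\overline D)}<1/2$; this guarantees $\nabla T_t = I + t\nabla\bth$ is uniformly invertible on $\overline D$, with $J_t = \det\nabla T_t$ bounded below by a positive constant, and $T_t$ a bijection of $\overline D$ onto itself (since $\bth$ vanishes on $\partial D$).

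For the first two bullets, I would write $T_t = \mathrm{id} + t\bth$ as an affine map from $[0,t_0]$ into $C^{2,1}(\overline D;\mathbb R^2)$ with constant derivative $\bth$; both $C^1$ regularity in $t$ and the identification of the time-derivative are then immediate. For $T_t^{-1}$, I would apply the inverse function theorem to the equation $T_t(y) = x$, whose Fr\'echet derivative in $y$ is $\nabla T_t$, uniformly invertible on $\overline D$. This produces $T_t^{-1}$ with the same spatial regularity as $T_t$ on each fixed $t$, and continuity in $t$ follows from composing the continuous map $[t\mapsto\nabla T_t]$ with matrix inversion on an open subset of $\mathrm{GL}(2)$.

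For the $J_t$ and $M_t$ regularity (bullets 3--4), I would expand $J_t = \det(I + t\nabla\bth)$ as a polynomial in $t$ (quadratic, in dimension two) whose coefficients are polynomials in the entries of $\nabla\bth$, hence in $C^{1,1}(\overline D)$; this gives $[t\mapsto J_t]\in C^1([0,t_0];C^{1,1}(\overline D))$ at once. For $M_t$, Cramer's rule yields $M_t = J_t^{-1}\,\mathrm{cof}(\nabla T_t)^\top$; the cofactor entries are again polynomials in $\nabla\bth$ and hence $C^{1,1}$, while $J_t^{-1}\in C^{1,1}(\overline D)$ because $J_t$ is strictly positive and bounded away from zero and $C^{1,1}(\overline D)$ is a Banach algebra that is closed under reciprocals of strictly positive elements.

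For the derivative identities at $t=0$, Jacobi's formula $\frac{d}{dt}\det(\nabla T_t) = J_t\,\mathrm{tr}(M_t\,\nabla\bth)$, specialized at $t=0$ where $\nabla T_0 = I$ and $M_0 = I$, gives $\frac{d}{dt}J_t|_{t=0} = \mathrm{tr}(\nabla\bth) = \dive\bth$. For $\frac{d}{dt}M_t|_{t=0}$, I would differentiate the identity $M_t\,\nabla T_t = I$ in $t$ to obtain $\dot M_t\,\nabla T_t + M_t\,\nabla\bth = 0$; evaluating at $t=0$ produces $\dot M_0 = -\nabla\bth$. The only real bookkeeping obstacle is tracking ambient function spaces consistently through each step, which comes down to the product and chain rules in $C^{k,1}(\overline D)$; the substantive content is packaged into the inverse function theorem and Jacobi's formula, both of which are classical.
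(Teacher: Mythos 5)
The paper does not actually prove Lemma \ref{Tprops}: it is stated with a ``cf.'' citation to Delfour--Zol\'esio and Sokolowski--Zol\'esio and used as a black box, so there is no in-paper argument to compare against. Your computation is the standard one from those references and is essentially sound: the affine dependence $T_t=\mathrm{id}+t\bth$, the smallness condition $t\|\nabla\bth\|_{L^\infty}<1/2$ guaranteeing uniform invertibility of $\nabla T_t$, the polynomial expansion of $J_t=\det(I+t\nabla\bth)$, Cramer's rule plus the Banach-algebra property of $C^{k,1}(\overline D)$ for $M_t$, and Jacobi's formula together with differentiating $M_t\nabla T_t=I$ for the two identities at $t=0$ are exactly the right ingredients. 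Two small points deserve attention. First, the regularity claimed in the statement ($T_t\in C^{2,1}$, $J_t, M_t\in C^{1,1}$) requires $\bth\in C^{2,1}(\overline D;\mathbb R^2)$, whereas $\bTh$ as defined in the paper only demands $C^{1,1}$; your proof silently assumes the stronger regularity when you place $\bth$ in $C^{2,1}$ and $\nabla\bth$ in $C^{1,1}$. This mismatch is inherited from the statement itself (with $\bth\in C^{1,1}$ one only gets $T_t\in C^{1,1}$ and $J_t,M_t\in C^{0,1}$), but you should make the hypothesis you are actually using explicit. Second, continuity of $t\mapsto T_t^{-1}$ in the $C^{2,1}$ norm does not follow merely from composing $t\mapsto\nabla T_t$ with matrix inversion: that gives continuity of $t\mapsto M_t$, while $\nabla(T_t^{-1})=M_t\circ T_t^{-1}$ also requires uniform continuity of $t\mapsto T_t^{-1}$ itself, which one obtains from the Lipschitz estimate $|T_t^{-1}(x)-T_s^{-1}(x)|\le C|t-s|\,\|\bth\|_{L^\infty}$ derived from the perturbation-of-identity structure. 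With these two clarifications your argument is complete and matches the classical proof the paper points to.
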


Before we move on to the next part, let us first recall Hadamard's identity which will be integral for solving the necessary conditions.
\begin{lmm}
Let $f\in C([0,t_0];W^{1,1}(D))$ and suppose that $\frac{\partial}{\partial t}f(0)\in L^1(D)$, then 
\begin{align*}
	\frac{d}{dt}\int_{\Omega_t}f(t,x)\du x\Big|_{t = 0} = \int_\Omega \frac{\partial}{\partial t}f(0,x)\du x + \int_{\Gamma_{\rm f}} f(0,x)\bth\cdot\bn\du s.
\end{align*}
\label{hadamard}
\end{lmm}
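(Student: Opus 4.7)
The plan is to eliminate the $t$-dependence of the integration region by pulling back via $T_t$, and then differentiate under the (now fixed) integral. Applying the change of variables $x = T_t(y)$ (which is admissible since $t_0$ is chosen so that $\det \nabla T_t > 0$) converts the left-hand side into
\begin{equation*}
\int_{\Omega_t} f(t,x)\du x = \int_\Omega f(t,T_t(y))\, J_t(y)\du y,
\end{equation*}
and the problem reduces to differentiating the right-hand integral over the fixed domain $\Omega$ at $t=0$.

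Writing $g(t,y) := f(t, T_t(y))\, J_t(y)$, I would split the increment as
\begin{equation*}
g(t,y) - g(0,y) = \bigl[f(t,T_t(y)) - f(0,T_t(y))\bigr]\, J_t(y) + \bigl[f(0,T_t(y))\, J_t(y) - f(0,y)\bigr].
\end{equation*}
Dividing by $t$ and letting $t \to 0^+$, the first bracket converges in $L^1(\Omega)$ to $\partial_t f(0,y)$, using the hypothesis $\partial_t f(0)\in L^1(D)$ together with the uniform convergences $J_t \to 1$ and $T_t \to \mathrm{id}$ granted by Lemma \ref{Tprops}. For the second bracket, undoing the change of variables shows that its integral over $\Omega$ equals $\int_{\Omega_t} f(0,x)\du x - \int_\Omega f(0,x)\du x$, i.e., the classical shape derivative of the stationary integrand $f(0,\cdot)$. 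By the product/chain rule combined with $\tfrac{d}{dt}J_t|_{t=0} = \dive \bth$ and $\tfrac{d}{dt}T_t|_{t=0} = \bth$, its derivative at $t=0$ is $\int_\Omega [\nabla f(0)\cdot \bth + f(0)\dive\bth]\du y = \int_\Omega \dive(f(0)\bth)\du y$. The divergence theorem then reduces this to a boundary integral, and since $\bth \in \bTh$ vanishes on $\partial D$ (so, in particular, on $\Gamma_{\rm in}$, $\Gamma_{\rm w}$, and on $\Gamma_{\rm out} \subset \partial D$), only the contribution on the free boundary $\Gamma_{\rm f}$ survives, giving $\int_{\Gamma_{\rm f}} f(0)\,\bth\cdot\bn\du s$. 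Summing both pieces yields the announced identity.

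The main technical obstacle is the low regularity of $f$: the assumption $f \in C([0,t_0];W^{1,1}(D))$ does not supply pointwise differentiability in $t$ of $g(t,y)$, so the chain rule cannot be invoked directly. I would overcome this by a density argument, approximating $f(0,\cdot)$ by smooth functions in $W^{1,1}(D)$, performing the computation classically on the approximation, and passing to the limit in $L^1(\Omega)$ using the uniform control $\|J_t-1\|_{L^\infty(D)} + \|\nabla T_t - I\|_{L^\infty(D)} \to 0$ from Lemma \ref{Tprops}; an analogous $L^1$-continuity argument handles the temporal increment via the hypothesis on $\partial_t f(0)$.
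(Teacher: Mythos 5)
Your argument is sound: the paper itself does not prove this lemma but simply defers to Theorem 5.2.2 and Proposition 5.4.4 of Henrot--Pierre \cite{henrot2018}, and your pullback-and-split strategy (change of variables via $T_t$, separating the temporal increment from the geometric one, then a density argument in $W^{1,1}$ plus the divergence theorem on the Lipschitz domain $\Omega$) is exactly the standard proof underlying those cited results. The only points worth making explicit are that ``$\frac{\partial}{\partial t}f(0)\in L^1(D)$'' must be read as convergence of the difference quotients $[f(t)-f(0)]/t$ in $L^1(D)$, that the convergence of the composed quotient $\big([f(t)-f(0)]/t\big)\circ T_t$ also uses continuity of composition in $L^1$ (again by density), and that the boundary term relies on the continuity of the trace operator $W^{1,1}(\Omega)\to L^1(\partial\Omega)$ so the limit passes to $\int_{\Gamma_{\rm f}} f(0)\,\bth\cdot\bn\du s$; with these details your proposal is complete and consistent with the cited proof.
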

\begin{proof}
See Theorem 5.2.2 and Proposition 5.4.4 of \cite{henrot2018}.
\end{proof}
\subsection{Rearrangement method} 
To investigate the sensitivity of the objective functional with respect to shape variations generated by the transformation $T_t$, we shall resort to a variational approach formalized by K. Ito, K. Kunisch, and G. Peichl \cite{ito2008}, which is known by many as the rearrangement method. This approach gets rid of the tedious process of solving first the sensitivity of the state solutions , then solving the {\it shape derivative} of the objective functional. Aside from the convenience the rearrangement method poses, we also mention that using the usual methods -- such as the chain rule, min-max formulation, etc. -- will not take into account the linearization of the state on the fixed boundary $\Gamma_{\rm out}$. This in turn will render the linearized state and the adjoint equation ill-posed. This problem, thankfully, is resolved by the rearrangment method which is focused on the Frech{\'e}t derivative of the state operator.

To start with,  we consider a Hilbert space $Y(\Omega)$ and an operator
\[E:Y(\Omega)\times \Oad\to Y(\Omega)^*,\]
where the equation $\langle E(y,\Omega), \phi \rangle_{Y(\Omega)^*\times Y(\Omega)} = 0$ corresponds to a variational problem in $\Omega.$

Suppose that the free-boundary is denoted by $\Gamma_{\rm f}\subset \partial\Omega$, and $g:Y(\Omega)\to\mathbb{R}$, the said method deals with the shape optimization 
\[ \min_{\Omega\in \Oad}\mathcal{J}(y,\Omega):=\int_\Omega g(y)dx + \alpha\int_{\Gamma_{\rm f}}\du s\]
subject to 
\begin{align}
 E(y,\Omega)=0\text{ in }Y(\Omega)^*.\label{weakgen}
\end{align}
We define the Eulerian derivative of $J$ at $\Omega$ in the direction $\bth\in\bTh$ by
\[ d\mathcal{J}(y,\Omega)\bth = \lim_{t\searrow0}\frac{\mathcal{J}(y_t,\Omega_t) - \mathcal{J}(y,\Omega)}{t},\]
where $y_t$ solves the equation $E(y_t,\Omega_t)=0$ in $Y(\Omega_t)^*$. If $d\mathcal{J}(y,\Omega)\bth$ exists for all $\bth\in\bTh$ and that $d\mathcal{J}(y,\Omega)$ defines a bounded linear functional on $\bTh$ then we say that $\mathcal{J}$ is {\it shape differentiable} at $\Omega$.

The so-called rearrangement method is given as below:
\begin{lmm}[cf \cite{ito2008}]
Suppose that the followring assumptions hold
\begin{pethau}
	\item[(A1)] There exists an operator $\tilde{E}:Y(\Omega)\times[0,t_0]\to Y(\Omega)^*$ such that $E(y_t,\Omega_t)=0$ in $Y(\Omega_t)^*$ is equivalent to 
	\begin{align}
		\tilde{E}(y^t,t) = 0\text{ in }Y(\Omega)^*,\label{weakbacktrack}
	\end{align}
	 with $\tilde{E}(y,0) = E(y,\Omega)$ for all $y\in Y(\Omega).$
	\item[(A2)]  Let $y,v\in Y(\Omega)$. Then $E_y(y,\Omega)\in \mathcal{L}(Y(\Omega),Y(\Omega)^*)$ satisfies
		\[ \langle E(v,\Omega)-E(y,\Omega)-E_y(y,\Omega)(v-y),z\rangle_{Y(\Omega)^*\times Y(\Omega)} = \mathcal{O}(\|v-y \|_{Y(\Omega)}^2),\]
		for all $z\in Y(\Omega). $
	\item[(A3)] Let  $y\in Y(\Omega)$ be the unique solution of \eqref{weakgen}. Then for any $f\in Y(\Omega)^*$  the solution of the following linearized equation exists:
	\begin{align*}
		\langle E_y(y,\Omega)\delta y,  z\rangle_{Y(\Omega)^*\times Y(\Omega)} = \langle f,  z\rangle_{Y(\Omega)^*\times Y(\Omega)} \text{ for all }z\in Y(\Omega).
	\end{align*}
	\item[(A4)] Let $y^t,y\in Y(\Omega)$ be the solutions of \eqref{weakbacktrack} and \eqref{weakgen}, respectively. Then $\tilde E$ and $E$ satisfy
	\[\lim_{t\searrow 0}\frac{1}{t}\langle \tilde E(y^t,t) - \tilde{E}(y,t) -F(y^t,\Omega) + E(y,\Omega) ,z\rangle_{Y(\Omega)^*\times Y(\Omega)} = 0\]
	for all $z\in Y(\Omega). $
	\item[(A5)] $g\in C^{1,1}(\mathbb{R}^2,\mathbb{R}).$
\end{pethau}
Let $y\in Y(\Omega)$ be the solution of \eqref{weakgen}, and suppose that the adjoint equation,  for all $z\in Y(\Omega)$
\begin{align}
\langle E_y(y,\Omega)z,p\rangle_{Y(\Omega)^*\times Y(\Omega)} = (g'(y),z)_{L^2(\Omega)} 
\label{weakadjoint}
\end{align}
has a unique solution $p\in Y(\Omega)$.  Then the Eulerian derivative of $J$ at $\Omega$ in the direction $\bth\in\bTh$ exists and is given by
\begin{align}
\begin{aligned}
d\mathcal{J}(y,\Omega)\bth =&\, -\frac{d}{dt}\langle \tilde{E}(y,t), p\rangle_{Y(\Omega)^*\times Y(\Omega)}\Big|_{t=0}+ \int_\Omega g(y)\dive\bth \du x + \alpha\int_{\Gamma_{\rm f}}\kappa\bth\cdot\bn \du s,
\end{aligned}
\label{necessaryconditiongen}
\end{align}
where $\kappa$ is the mean curvature of the surface $\Gamma_{\rm f}$.
\end{lmm}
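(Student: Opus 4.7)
The plan is to exploit the fact that $y^{t}\in Y(\Omega)$ solves $\tilde E(y^{t},t)=0$ in $Y(\Omega)^{*}$, so that the pairing $\langle\tilde E(y^{t},t),p\rangle_{Y(\Omega)^{*}\times Y(\Omega)}$ vanishes identically in $t$, and to combine this zero identity with the adjoint equation to bypass any explicit computation of a material derivative of $y^{t}$. First I would rewrite the perturbed cost on the fixed reference domain using the change of variables induced by $T_{t}$, producing
\begin{equation*}
\mathcal{J}(y_{t},\Omega_{t}) - \mathcal{J}(y,\Omega) = \int_{\Omega}\bigl[g(y^{t})J_{t} - g(y)\bigr]\du x + \alpha\bigl[P(\Omega_{t})-P(\Omega)\bigr].
\end{equation*}
The perimeter increment, divided by $t$, converges to $\alpha\int_{\Gamma_{\rm f}}\kappa\,\bth\cdot\bn\,\du s$ via standard boundary shape calculus applied to the tangential Jacobian of $T_{t}$ together with Lemma~\ref{Tprops}. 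The volume-type piece I would split as
\begin{equation*}
\int_{\Omega}\bigl[g(y^{t})J_{t}-g(y)\bigr]\du x = \int_{\Omega} g(y^{t})(J_{t}-1)\du x + \int_{\Omega}\bigl(g(y^{t})-g(y)\bigr)\du x,
\end{equation*}
the first summand yielding $\int_{\Omega}g(y)\dive\bth\,\du x$ in the limit thanks to $\tfrac{d}{dt}J_{t}|_{t=0}=\dive\bth$ and the continuity of $t\mapsto y^{t}$.

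The substantive contribution is the second summand. By (A5) and a Taylor expansion, $\tfrac{1}{t}\int_{\Omega}(g(y^{t})-g(y))\du x = \tfrac{1}{t}(g'(y),y^{t}-y)_{L^{2}(\Omega)}+o(1)$, and then testing the adjoint equation \eqref{weakadjoint} with $z=y^{t}-y$ identifies $(g'(y),y^{t}-y)_{L^{2}(\Omega)} = \langle E_{y}(y,\Omega)(y^{t}-y),p\rangle_{Y(\Omega)^{*}\times Y(\Omega)}$. To eliminate the unknown increment $y^{t}-y$, I would use the zero identity $\langle\tilde E(y^{t},t),p\rangle=0$ and decompose it, inserting $E(y,\Omega)=0=\tilde E(y,0)$ freely, as
\begin{equation*}
0 = \bigl\langle \tilde E(y^{t},t)-\tilde E(y,t)-E(y^{t},\Omega)+E(y,\Omega),p\bigr\rangle + \bigl\langle E(y^{t},\Omega)-E(y,\Omega),p\bigr\rangle + \bigl\langle \tilde E(y,t)-\tilde E(y,0),p\bigr\rangle.
\end{equation*}
Dividing by $t$ and sending $t\searrow 0$, (A4) annihilates the first bracket, (A2) converts the second into $\tfrac{1}{t}\langle E_{y}(y,\Omega)(y^{t}-y),p\rangle + \mathcal{O}(\tfrac{1}{t}\|y^{t}-y\|_{Y(\Omega)}^{2})$, and the third bracket tends to $\tfrac{d}{dt}\langle\tilde E(y,t),p\rangle|_{t=0}$. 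Rearranging gives
\begin{equation*}
\lim_{t\searrow 0}\tfrac{1}{t}(g'(y),y^{t}-y)_{L^{2}(\Omega)} = -\tfrac{d}{dt}\bigl\langle \tilde E(y,t),p\bigr\rangle\big|_{t=0},
\end{equation*}
and substituting back into the decomposition of $\mathcal{J}(y_{t},\Omega_{t})-\mathcal{J}(y,\Omega)$ produces the formula \eqref{necessaryconditiongen}.

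The step I expect to be the main obstacle is controlling the quadratic remainder coming from (A2) after division by $t$: this requires the a~priori rate $\|y^{t}-y\|_{Y(\Omega)} = \mathcal{O}(\sqrt{t})$, ideally $\mathcal{O}(t)$, which is not spelled out among the hypotheses and must be extracted from the solvability of the linearization in (A3) combined with the $t$-regularity of $\tilde E(y,\cdot)$ implicit in (A4). A secondary technical point is justifying the pointwise Taylor expansion of $g\circ y^{t}$ around $y$ when $Y(\Omega)$ does not embed into $L^{\infty}(\Omega)$; this I would handle via the integral mean-value form $g(y^{t})-g(y) = \bigl(\int_{0}^{1}g'(y+s(y^{t}-y))\,ds\bigr)(y^{t}-y)$ and dominated convergence, using (A5) and the strong convergence $y^{t}\to y$ in $Y(\Omega)$ that (A3)--(A4) deliver.
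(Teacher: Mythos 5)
Your argument is correct and is essentially the proof of this lemma as given in the cited source \cite{ito2008} (the paper itself does not reprove it): the same decomposition of $\langle \tilde{E}(y^t,t),p\rangle = 0$ into the (A4) bracket, the (A2) linearization bracket, and the $t$-derivative bracket, combined with the adjoint equation and the transported-domain splitting of the cost. The only obstacle you flag --- the rate needed to absorb the quadratic remainder from (A2) after dividing by $t$ --- is exactly what Lemma \ref{holder} (Proposition 2.1 of \cite{ito2008}) supplies, namely $\|y^t-y\|_{Y(\Omega)} = o(t^{1/2})$ as $t\searrow 0$ under (A3), which suffices since $\mathcal{O}(\|y^t-y\|_{Y(\Omega)}^2)/t \to 0$.
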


It is noteworthy to mention that originally \cite{ito2008}, assumption {\it (A3)} is instead written as the H{\"o}lder continuity of solutions $y^t\in X(\Omega)$ to \eqref{weakbacktrack} with respect to the time parameter $t\in[0,\tau]$.  Fortunately, from the same paper, Ito, K., et.al.  have shown that assumption {\it (A3)} implies the aforementioned continuity. We cite the said result in the following lemma.
\begin{lmm}
Suppose that $y\in Y(\Omega)$ solves \eqref{weakgen} and $y^t\in Y(\Omega)$ is the solution to \eqref{weakbacktrack}. Assume furthermore that {\it (A3)} holds. Then, $\|y^t-y \|_{Y(\Omega)} = o(t^\frac{1}{2})$ as $t\searrow 0$.
\label{holder}
\end{lmm}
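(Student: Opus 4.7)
Set $w^t := y^t - y$; the aim is $\|w^t\|_{Y(\Omega)} = o(t^{1/2})$. The strategy is to use (A4) to view $w^t$ as driven by a right-hand side of size $O(t)$, invert the linearization via (A3), and absorb the quadratic error through a bootstrap. Specifically, since $\tilde E(y^t, t) = 0$ by definition of $y^t$ and $E(y,\Omega) = \tilde E(y, 0) = 0$, assumption (A4) reads
\[
\lim_{t \searrow 0} \tfrac{1}{t}\langle E(y^t, \Omega) + \tilde E(y, t), z\rangle_{Y(\Omega)^*\times Y(\Omega)} = 0 \qquad \forall z \in Y(\Omega).
\]
The family $\{\tfrac{1}{t}(E(y^t, \Omega) + \tilde E(y, t))\}_{t \in (0, t_0]} \subset Y(\Omega)^*$ thus converges pointwise to zero, and Banach--Steinhaus yields $\|E(y^t, \Omega) + \tilde E(y, t)\|_{Y(\Omega)^*} = O(t)$. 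Since $\tilde E$ arises by pulling \eqref{weakgen} back through $T_t$, whose $t$-dependence is $C^1$ by Lemma \ref{Tprops}, and $\tilde E(y, 0) = 0$, one gets $\|\tilde E(y, t)\|_{Y(\Omega)^*} = O(t)$, hence $\|E(y^t, \Omega)\|_{Y(\Omega)^*} = O(t)$.

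Next I apply (A2) at $v = y^t$ to obtain $E(y^t, \Omega) = E_y(y, \Omega)\, w^t + R^t$ with $\|R^t\|_{Y(\Omega)^*} \le C\|w^t\|_{Y(\Omega)}^2$. By (A3) the operator $E_y(y, \Omega) \in \mathcal{L}(Y(\Omega), Y(\Omega)^*)$ is surjective, and injective (by choosing $f = 0$ in (A3)). The bounded inverse theorem therefore supplies a constant $c_0 > 0$ with $\|v\|_Y \le c_0 \|E_y(y, \Omega) v\|_{Y^*}$ for all $v \in Y(\Omega)$. Combining with the previous paragraph,
\[
\|w^t\|_{Y(\Omega)} \le c_0 \|E(y^t, \Omega)\|_{Y(\Omega)^*} + c_0 C \|w^t\|_{Y(\Omega)}^2 = O(t) + C_1 \|w^t\|_{Y(\Omega)}^2.
\]

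To close this inequality I must first establish a priori smallness $\|w^t\|_Y \to 0$: uniform energy bounds on $\{y^t\}$ transported back through $T_t$ (via Theorem \ref{th:wp} together with Lemma \ref{Tprops}) ensure weak precompactness in $Y(\Omega)$; any weak cluster point solves $E(\cdot, \Omega) = 0$ by passage to the limit in \eqref{weakbacktrack} through the continuity of $\tilde E$ at $t = 0$, and the uniqueness assumption \eqref{est:uniqueness} identifies it with $y$; compactness of the nonlinear terms then upgrades weak to strong convergence. Once $C_1 \|w^t\|_Y \le 1/2$, the quadratic term is absorbed on the left, yielding $\|w^t\|_{Y(\Omega)} \le 2 c_0\, O(t) = O(t) \subset o(t^{1/2})$. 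The main technical obstacle is the first step --- lifting the pointwise convergence in (A4) to a $Y(\Omega)^*$-norm bound, for which Banach--Steinhaus is indispensable --- together with the a priori smallness $\|w^t\|_Y \to 0$ that requires uniqueness for \eqref{weakgen}; everything else is a routine bootstrap of a quadratic remainder.
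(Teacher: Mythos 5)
The paper offers no proof of this lemma beyond the citation of \cite[Proposition 2.1]{ito2008}, so your argument has to stand on its own; its overall strategy (invert the linearization, absorb the quadratic remainder from (A2), feed in (A4) as an $O(t)$ forcing, and use an a priori smallness to close the bootstrap) is the natural one and is in the spirit of the cited result, but as written it has genuine gaps. The clearest one is the inverse estimate: you need $\|v\|_{Y(\Omega)}\le c_0\|E_y(y,\Omega)v\|_{Y(\Omega)^*}$, i.e.\ $E_y(y,\Omega)$ bounded below, but (A3) as stated asserts only existence of a solution for every $f\in Y(\Omega)^*$, i.e.\ surjectivity. Your claim that injectivity follows ``by choosing $f=0$'' is a non sequitur: for $f=0$ the zero function is trivially a solution, and this says nothing about uniqueness. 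Injectivity is in fact available in this paper's concrete setting because the verification of (A3) (and the Appendix) proves \emph{unique} solvability of the linearized problem via coercivity under \eqref{est:uniqueness}, but it must be invoked as such, not deduced from the $f=0$ trick.

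The second gap is in replacing the tested statement (A4) by norm bounds. The Banach--Steinhaus step giving $\|E(y^t,\Omega)+\tilde E(y,t)\|_{Y(\Omega)^*}=O(t)$ is fine (apply the uniform boundedness principle along arbitrary sequences $t_n\searrow 0$), but the further claim $\|\tilde E(y,t)\|_{Y(\Omega)^*}=O(t)$ is not among (A1)--(A5) and does not follow from them; it is a property of the concrete pulled-back operator that also requires the data to be transported with a rate. Indeed $\tilde E(y,t)-\tilde E(y,0)$ contains the term $(\blf\circ T_t-\blf,\cdot)_\Omega$, which for $\blf\in L^2(\Omega)^2$ only is $o(1)$ in the dual norm, not $O(t)$; at the stated regularity your argument then yields only $\|y^t-y\|=o(1)$ rather than $o(t^{1/2})$ (extra regularity of $\blf$, of the kind Lemma \ref{hadamard} anyway demands, would repair this, but it must be stated). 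Finally, the a priori smallness $\|y^t-y\|\to 0$, which you correctly identify as indispensable for absorbing the quadratic term, is only sketched: the ``compactness of the nonlinear terms upgrades weak to strong convergence'' step needs an actual argument (e.g.\ testing the difference of the equations with $y^t-y$ and using the coercivity from Lemma \ref{midg} together with compact trace embeddings, as in Theorem \ref{th:wp}), and it invokes the uniqueness condition \eqref{est:uniqueness}, which is not listed among the hypotheses of the lemma.
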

\begin{proof}
See \cite[Proposition 2.1]{ito2008}.
\end{proof}

We begin applying this method by introducing the velocity-pressure operator $\mathbb E(\cdot)_\Omega:X(\Omega)\to X(\Omega)^*  $ defined by
\begin{align*}
\langle\mathbb  E(\bu,p)_\Omega,(\bphi, \psi) \rangle_{X(\Omega)^*\times X(\Omega)} =&  \mathbb{A}(\bu;\bu,\bphi)_\Omega - \frac{1}{2}\mathbb{C}(\bu;\bu,\bphi) + d(\bphi,p)_\Omega\\& + d(\bu,\psi)_\Omega-  \langle\Phi,{\bphi}	\rangle_{{\bH}^{-1}(\Omega)\times\bH^1_{\Gamma_0}(\Omega)}
\end{align*} 
where $X(\Omega) := \bH^1_{\Gamma_0}(\Omega)\times L^2(\Omega)$, ${\bH}^{-1}(\Omega)$ is the dual of $ \bH^1_{\Gamma_0}(\Omega)$.
It can be easily shown that $d(\cdot,\cdot)_\Omega$ satisfies the inf-sup condition, hence there exists $(\tilde\bu, p)\in X(\Omega)$ such that for any $(\bphi, \psi)\in X(\Omega)$ 
\begin{align}
 \langle\mathbb E(\tilde\bu,p)_\Omega,(\bphi, \psi) \rangle_{X(\Omega)^*\times X(\Omega)} = 0.\label{operator}
 \end{align}
The element $\tilde\bu\in  \bH^1_{\Gamma_0}(\Omega)$, in particular, solves the variational equation \eqref{weak}.\\

\noindent{\bf Notation:} {\it Moving forward we shall use the following notations $X = X(\Omega)$,  $X_t = X(\Omega_t)$, and ${\bV}={\bV}(\Omega)$.}\\

Our goal is to characterize, and of course show the existence of the Eulerian derivative of the objective functional 
\begin{align}
\mathcal{G}(\bu,\Omega) =  \alpha\int_{\Gamma_{\rm f}} \du s-\frac{ \gamma}{2}\int_{\Omega}|\nabla\times \bu|^2 \du x.
\end{align}

Now,  from the deformation field $T_t$, we let $(\tilde\bu_t,p_t)\in X_t$ be the solution of the equation
\begin{align} 
\langle\mathbb E(\tilde\bu_t,p_t)_{\Omega_t},(\bphi_t, \psi_t) \rangle_{X_t^*\times X_t} = 0 \text{ for all }(\bphi_t, \psi_t)\in X_t.
\label{operatoront}
\end{align}
We perturb equation \eqref{operatoront} back to the reference domain $\Omega$ which gives us the operator $\tilde{\mathbb E}:X\times[0,\tau]\to X^*$ defined by
\begin{align}
\begin{aligned}
\langle\tilde{\mathbb E}((\bu,p),t),(\bphi,\psi) \rangle_{X^*\times X}  =&\, \mathbb{A}_t(\bu;\bu,\bphi)-  \frac{1}{2}\mathbb{C}(\bu;\bu,\bphi) + d_t(\bphi,p)\\ &+ d_t(\bu,\psi) -  \langle\Phi^t,{\bphi}	\rangle_{{\bH}^{-1}(\Omega)\times\bH^1_{\Gamma_0}(\Omega)}
\end{aligned}
\end{align}
where -- by denoting $(M_t^\top)_k$ the k$^{th}$ row of $M_t^\top$, and $\bv_k$ the k$^{th}$ component of a vector $\bv$ -- the components are defined as follows
\begin{align*}
\mathbb{A}_t(\bu;\bu,\bphi) =&\,  (J_tM_t\nabla{\bu},M_t\nabla{\bphi})_\Omega + (J_t{\bu}\cdot M_t\nabla{\bg},{\bphi})_\Omega \\ &+ (J_t{\bu}\cdot M_t\nabla{\bu},{\bphi} )_\Omega + (J_t{\bg}\cdot M_t\nabla{\bu},{\bphi} )_\Omega,\\					
d_t(\bv,\psi) = &\, -\sum_{k = 1}^2 (J_t \psi ,(M_t^\top)_k\nabla\bv_k)_\Omega,
\end{align*}
and the element $\Phi^t\in{\bH}^{-1}(\Omega) $ is defined by
\begin{align*} 
 \langle\Phi^t,{\bphi}	\rangle_{{\bH}^{-1}(\Omega)\times\bH^1_{\Gamma_0}(\Omega)} =&\, ({\blf}\circ T_t - J_t{\bg}\cdot M_t\nabla{\bg},{\bphi})_\Omega\\ &  - \nu(J_t M_t\nabla{\bg},M_t\nabla{\bphi})_{\Omega} + \frac{1}{2}({\bg}\cdot{\bn},{\bg}\cdot{\bphi})_{\Gamma_{\rm out}}.
 \end{align*}
By construction, this operator satisfies {\it(A1)}, in particular if $(\tilde\bu_t,p_t)\in X_t$ solves \eqref{operatoront}, then the translated element $(\tilde{\bu}^t,p^t)=(\tilde\bu_t\circ T_t,p_t\circ T_t)\in X$ solves 
\begin{align} 
\langle\tilde{\mathbb E}((\tilde\bu^t,p^t),t),(\bphi,\psi) \rangle_{X^*\times X} = 0 \text{ for all }(\bphi, \psi)\in X.
\label{backtrackop}
\end{align}
Indeed, for sufficiently small values of $t>0$ the coercivity of $\mathbb{A}_t(\cdot)-\frac{1}{2}\mathbb{C}(\cdot)$ can be easily verified, as well as the inf-sup condition for the bilinear form $d_t$.

For property {\it(A2)}, we introduce the linearization of the Stokes operator $A_\Omega:\bV\cap H^2(\Omega)^2 \to \bV^*$,  and of the bilinear operators $B_\Omega:\bV\times\bV\to \bV^*$ and $C:\bV\times\bV\to H^{-\frac{1}{2}}(\Gamma_{\rm out})^2$ which were briefly discussed at the end of Section \ref{sect2}. 
\begin{prpstn}
The Fr{\' e}chet derivative of the operators $A_\Omega,$ $B_\Omega$ and $C$ at the point $\bu\in \bV$ in the direction $\delta\!\bu\in\bV$ are given as follows:
\begin{itemize}
	\item[1.] $\langle D_uA_\Omega(\bu)\delta\!\bu, \bphi\rangle_{\bV^*\times\bV} = a(\delta\!\bu,\bphi)_{\Omega}$;
	\item[2.] $\langle D_uB_\Omega\bu\delta\!\bu, \bphi\rangle_{\bV^*\times\bV} = b(\bu;\delta\!\bu,\bphi)_{\Omega} +b(\delta\!\bu;\bu,\bphi)_{\Omega}$;
	\item[3.] $\langle D_uB_\Omega(\bu,\bv)\delta\!\bu, \bphi\rangle_{\bV^*\times\bV} = b(\delta\!\bu;\bv,\bphi)_{\Omega}$;
	\item[4.] $\langle D_uB_\Omega(\bv,\bu)\delta\!\bu, \bphi\rangle_{\bV^*\times\bV} = b(\bv;\delta\!\bu,\bphi)_{\Omega}$;
	\item[5.] $\langle D_u C\!\bu\delta\!\bu,\bphi \rangle_{\bH^{-\frac{1}{2}}\times \bH^{\frac{1}{2}}} = \langle C(\bu,\delta\!\bu) + C(\delta\!\bu,\bu),\bphi \rangle_{\bH^{-\frac{1}{2}}\times \bH^{\frac{1}{2}}};$
	\item[6.] $\langle D_u C(\bu,\bv)\delta\!\bu,\bphi \rangle_{\bH^{-\frac{1}{2}}\times \bH^{\frac{1}{2}}} =\langle C(\delta\!\bu,\bv),\bphi \rangle_{\bH^{-\frac{1}{2}}\times \bH^{\frac{1}{2}}};$
	\item[7.] $\langle D_u C(\bv,\bu)\delta\!\bu,\bphi \rangle_{\bH^{-\frac{1}{2}}\times \bH^{\frac{1}{2}}} = \langle C(\bv,\delta\!\bu),\bphi \rangle_{\bH^{-\frac{1}{2}}\times \bH^{\frac{1}{2}}},$
\end{itemize}
where we used the notations $C\!\bu = C(\bu,\bu)$, $\bH^{\frac{1}{2}} = H^{\frac{1}{2}}(\Gamma_{\rm out})^2$ and $\bH^{-\frac{1}{2}} = H^{-\frac{1}{2}}(\Gamma_{\rm out})^2$.
\label{frechet}
\end{prpstn}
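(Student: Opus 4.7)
The plan is to observe that all seven statements follow from elementary manipulations exploiting the linearity of $A_\Omega$ and the bilinearity of the maps $B_\Omega(\cdot,\cdot)$ and $C(\cdot,\cdot)$ in their two slot arguments, together with the continuity estimates already recorded for these operators. Item (1) is immediate: viewed as an operator from $\bV$ into $\bV^*$ via $\langle A_\Omega \bu, \bv\rangle_{\bV^*\times\bV} = a(\bu,\bv)_\Omega$, $A_\Omega$ is linear, so its Fr\'echet derivative at any $\bu$ coincides with $A_\Omega$ itself, and testing against $\bphi$ returns $a(\delta\bu,\bphi)_\Omega$.

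For items (3), (4), (6), and (7), the argument is to fix one slot and exploit bilinearity. For (3), the map $\bu \mapsto B_\Omega(\bu,\bv)$ is linear in $\bu$ for fixed $\bv\in\bV$, hence its Fr\'echet derivative equals itself, and testing against $\bphi$ gives $b(\delta\bu;\bv,\bphi)_\Omega$. Continuity of this linear operator $\bV \to \bV^*$ is guaranteed by Lemma \ref{propb}(1). The items (4), (6), and (7) are treated identically; the boundary cases rely on the Rellich--Kondrachov-based continuity of $C$ noted immediately after its definition.

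For items (2) and (5) — the diagonal maps $\bu \mapsto B_\Omega(\bu,\bu)$ and $\bu \mapsto C(\bu,\bu)$ — I would expand by bilinearity:
\begin{align*}
B_\Omega(\bu+\delta\bu,\bu+\delta\bu) - B_\Omega(\bu,\bu) = B_\Omega(\bu,\delta\bu) + B_\Omega(\delta\bu,\bu) + B_\Omega(\delta\bu,\delta\bu).
\end{align*}
The first two terms are linear in $\delta\bu$ and, when tested against $\bphi\in\bV$, yield exactly $b(\bu;\delta\bu,\bphi)_\Omega + b(\delta\bu;\bu,\bphi)_\Omega$. The quadratic remainder $B_\Omega(\delta\bu,\delta\bu)$ must then be shown to be $o(\|\delta\bu\|_{\bV})$ in the norm of $\bV^*$; by Lemma \ref{propb}(1),
\begin{align*}
|\langle B_\Omega(\delta\bu,\delta\bu),\bphi\rangle_{\bV^*\times\bV}| = |b(\delta\bu;\delta\bu,\bphi)_\Omega| \le c\|\delta\bu\|_{\bV}^2\|\bphi\|_{\bV},
\end{align*}
so dividing by $\|\delta\bu\|_{\bV}$ yields the required smallness as $\delta\bu\to0$ in $\bV$. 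Item (5) proceeds identically, where the continuous embedding $\bV \hookrightarrow L^4(\Gamma_{\rm out})^2$ controls $\|C(\delta\bu,\delta\bu)\|_{H^{-1/2}(\Gamma_{\rm out})^2}$ by $c\|\delta\bu\|_{\bV}^2$.

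The main obstacle, such as it is, is purely notational: one must verify linearity and boundedness of each proposed derivative on the correct function spaces, and confirm that the quadratic remainders for (2) and (5) are of order $o(\|\delta\bu\|_{\bV})$ in the appropriate dual norms. Both tasks reduce to direct application of the continuity estimate for the trilinear form $b$ from Lemma \ref{propb}(1) and the continuity of $C$ through trace-space embeddings, so no new analytical ingredient is required beyond what has already been established in Section \ref{sect2}.
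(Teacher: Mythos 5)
Your proposal is correct and follows essentially the same route as the paper: the linear items are dispatched by linearity in the relevant slot, and for the diagonal maps $\bu\mapsto B_\Omega(\bu,\bu)$ and $\bu\mapsto C(\bu,\bu)$ one checks that the quadratic remainders $b(\delta\!\bu;\delta\!\bu,\bphi)_\Omega$ and the corresponding boundary term are bounded by $c\|\delta\!\bu\|_{\bV}^2$ times the test-function norm, using Lemma \ref{propb}(1) and the Rellich--Kondrachov trace embedding, exactly as in the paper's proof (which only writes out these two nonlinear cases).
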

\begin{proof} We only expose the parts where the nonlinearity occurs, i.e., we show that 
	\[ \langle D_uB_\Omega\bu\delta\!\bu, \bphi\rangle_{\bV^*\times\bV} = b(\bu;\delta\!\bu,\bphi)_{\Omega} +b(\delta\!\bu;\bu,\bphi)_{\Omega}\]
	and 
	\[\langle D_u C\!\bu\delta\!\bu,\bphi \rangle_{\bH^{-\frac{1}{2}}\times \bH^{\frac{1}{2}}}\! = \langle C(\bu,\delta\!\bu),\bphi \rangle_{\bH^{-\frac{1}{2}}\times \bH^{\frac{1}{2}}}+\langle C(\delta\!\bu,\bu),\bphi \rangle_{\bH^{-\frac{1}{2}}\times \bH^{\frac{1}{2}}}. \]
	Indeed, we have the following computations
	\begin{align*}
		\langle B_\Omega(\delta\!\bu + \bu) - B_\Omega\bu-d_uB_\Omega\bu\delta\!\bu, \bphi\rangle_{\bV^*\times\bV}= &\, b(\delta\!\bu;\delta\!\bu,\bphi)_{\Omega} \le c\|\delta\!\bu\|_{\bV(\Omega)}^2\|\bphi\|_{\bV(\Omega)}
	\end{align*}
and 
\begin{align*}
		\langle C(\delta\!\bu + \bu) - C\!\bu-d_uC\!\bu\delta\!\bu, \bphi\rangle_{\bH^{-\frac{1}{2}}\times \bH^{\frac{1}{2}}} & = \frac{1}{2}(\delta\!\bu\cdot\bn,\delta{\bu}\cdot\bphi)_{\Gamma_{\rm out}}\\ & \le c\|\delta\!\bu\|_{\bV(\Omega)}^2\|\bphi\|_{\bH^{\frac{1}{2}}}.
	\end{align*}
	On the last inequality, we used Rellich--Kondrachov embedding $H^1(\Omega)^2 \hookrightarrow L^q(\partial\Omega)^2$ for $q\ge2$.
	\end{proof}

With these derivatives, we can determine the Fr{\'e}chet derivative of $\mathbb E(\cdot)_\Omega$ at an element $(\bu,p)\in X$ in the direction $(\delta\!\bu,\delta p)\in X$, which we shall denote as $\mathbb E'(\bu,p) \in\mathcal{L}(X,X^*)$, given by
\begin{align*}
	\langle\mathbb E'(\bu,p)(\delta\!\bu,\delta p),&(\bphi,\psi)\rangle_{X^*\times X}  = \langle [D_uA_\Omega\!\bu+ D_uB_\Omega\!\bu + D_uB_\Omega(\bu,\bg)]\delta\!\bu, \bphi\rangle_{\bV^*\times\bV}\\ 
	&\, + 	\langle B_\Omega(\bg,\bu)\delta\!\bu, \bphi\rangle_{\bV^*\times\bV}  - \langle [D_uC\!\bu + D_uC(\bg,\bu)]\delta\!\bu,\bphi \rangle_{\bH^{-\frac{1}{2}}\times \bH^{\frac{1}{2}}}\\
	 & + d(\bphi,\delta p)_\Omega + d(\delta\!\bu,\psi)_\Omega.
\end{align*}
This gives us {\it (A2)}. Indeed, by following the proof Proposition \ref{frechet}
\begin{align*}
	\langle\mathbb E(\delta\!\bu+\bu,&\delta p+p) -\mathbb E(\bu,p) -\mathbb E'(\bu,p)(\delta\!\bu,\delta p),(\bphi,\psi)\rangle_{X^*\times X}\\
	= &\, \langle B_\Omega\delta\!\bu, \bphi\rangle_{\bV^*\times\bV} - \langle C\delta\!\bu, \bphi\rangle_{\bH^{-\frac{1}{2}}\times \bH^{\frac{1}{2}}}\le \tilde c\|\delta\!\bu\|_{\bV}^2\|\delta\!\bphi\|_{\bV}
\end{align*}
Furthermore, by the same arguments as done for the proof of Theorem \ref{th:wp} and since $d(\cdot,\cdot)$ satisfies the inf-sup condition,   for any $\mathcal{F}\in X^*$, there exists a unique solution $(\delta\!\bu,\delta p)\in X$ to 
\[ \langle\mathbb E'(\bu,p)(\delta\!\bu,\delta p),(\bphi,\psi)\rangle_{X^*\times X} = \langle \mathcal{F},(\bphi,\psi)\rangle_{X^*\times X},\]
for all $(\bphi,\psi)\in X.$ This implies {\it (A3)}.

To verify {\it (A4)},  we note that 
\begin{align*}
	\langle \tilde{\mathbb E}&((\tilde\bu^t,p^t),t)-\tilde{\mathbb E}((\tilde\bu,p),t) - \mathbb E(\tilde\bu^t,p^t)_\Omega +\mathbb E(\tilde\bu,p)_\Omega,(\bphi,\psi)\rangle_{X^*\times X} \\
	= &\, \nu\left[\big((J_tM_t - I)\nabla(\tilde\bu^t-\tilde\bu)\!:\!\nabla\bphi\big)_\Omega + \big(\nabla(\tilde\bu^t-\tilde\bu)\!:\!(M_t - I)\nabla\bphi\big)_{\Omega}\right]\\
	&+\big( (\tilde\bu^t-\tilde\bu)\cdot(J_tM_t - I)\nabla\tilde\bu^t,\bphi\big)_\Omega +\big( (\tilde\bu^t-\tilde\bu)\cdot(J_tM_t - I)\nabla\bg,\bphi\big)_\Omega  \\
	& +\big( \tilde\bu\cdot(J_tM_t - I)\nabla(\tilde\bu^t-\tilde\bu),\bphi\big)_\Omega +\big( \bg\cdot(J_tM_t - I)\nabla(\tilde\bu^t-\tilde\bu),\bphi\big)_\Omega \\
	& - \sum_{k=1}^2\left[\big( (p^t-p)(J_tM_t-I)e_k,\nabla\bphi_k\big)_\Omega + \big( \psi(J_tM_t-I)e_k,\nabla(\tilde\bu_k^t - \tilde\bu_k)\big)_\Omega  \right]\\
	\le &\, \nu( \|J_tM_t-I\|_\infty+\|M_t-I\|_\infty) \|\tilde\bu^t - \tilde\bu\|_{H^1(\Omega)^2}\|\bphi\|_{H^1(\Omega)^2}\\
	& +  \|J_tM_t-I\|_\infty(\|\tilde\bu^t\|_{H^1(\Omega)^2}+\|\bg\|_{H^1(\Omega)^2})\|\tilde\bu^t - \tilde\bu\|_{H^1(\Omega)^2}\|\bphi\|_{H^1(\Omega)^2}\\
	& +  \|J_tM_t-I\|_\infty(\|\tilde\bu\|_{H^1(\Omega)^2}+\|\bg\|_{H^1(\Omega)^2})\|\tilde\bu^t - \tilde\bu\|_{H^1(\Omega)^2}\|\bphi\|_{H^1(\Omega)^2}\\
	& + \|J_tM_t-I\|_\infty(\|p^t-p\|_{L^2(\Omega)^2} \|\bphi\|_{H^1(\Omega)^2}+ \|\psi\|_{L^2(\Omega)^2} \|\tilde\bu^t - \tilde\bu\|_{H^1(\Omega)^2})
\end{align*}
Thus, by dividing the previous computation by $t$ and by utilizing Lemmata \ref{Tprops} and \ref{holder},  we infer that $\mathbb E(\cdot)_\Omega$ and $\tilde{\mathbb E}$ satisfy {\it (A4)}.

Having been able to show that the operator $\mathbb E(\cdot)_\Omega$ and $\tilde{\mathbb E}$ satisfy {\it (A1)-(A4)}, and from the fact that the objective functional mostly consists of squared-norms, i.e.,  $P,J\in C^\infty(X,\mathbb{R}),$ the last remaining task to assure the existence of the Eulerian derivative of $\mathcal{G}$ is the unique existence of the solution $(\bv,\pi)\in X$ to the adjoint problem
\begin{align} 
\langle\mathbb E'(\tilde \bu,p)(\bphi,\psi),(\bv,\pi)\rangle_{X^*\times X} =  (G'(\tilde\bu), \bphi)_{L^2(\Omega)^2}\label{adjoint}
\end{align}
where $G'$ corresponds to the derivative of the integrand of $\mathcal{G}$ with respect to $\bu$ whose action is given as 
\[ (G'(\tilde\bu), \bphi)_{L^2(\Omega)^2} = -\gamma(\vec{\nabla}\times(\nabla\times(\tilde\bu+\bg)), \bphi)_{L^2(\Omega)^2}. \]
The unique existence of the adjoint variables $(\bv,\pi)\in X$ can quite easily be established following the arguments of Theorem \ref{th:wp}.  Furthermore,  from the regularity of the domain, the adjoint solution $\bv$ satisfies ${\bv}\in{\bV}\cap H^2(\Omega)^2$, and that the solution can be looked at as the solution to the system
\begin{align}
	\left\{
		\begin{aligned}
			-\Delta{\bv} + (\nabla{\bu})^{\top}{\bv} -({\bu}\cdot\nabla){\bv} + \nabla\pi & = -\gamma\vec{\nabla}\times(\nabla\times{\bu}) &&\text{ in }\Omega\\
			\dive{\bv} & = 0 &&\text{ in }\Omega\\
			{\bv} & = 0 &&\text{ on }\partial\Omega\backslash\Gamma_{\rm out}\\
			-\pi{\bn}+\nu\partial_{\bn}{\bv} & = \frac{1}{2}\big[({\bu}\cdot{\bv}){\bn} - ({\bu}\cdot{\bn}){\bv} \big] &&\text{ on }\Gamma_{\rm out}.
 		\end{aligned}
	\right.
	\label{strong_adjoint}
\end{align}

To finally characterize the shape derivative of $\mathcal{G}$, we start by evaluating
\[\mathcal{D}_t\tilde{\mathbb E}:=\frac{d}{dt}\langle\tilde{\mathbb E}((\tilde{\bu},p),t),({\bv},\pi) \rangle_{X^*\times X}\big|_{t=0},\]
where $(\tilde{\bu},p),({\bv},\pi) \in X$ solve \eqref{operator} and \eqref{adjoint}, respectively. To do this, we write the operator $\tilde{\mathbb E}$ with the pushed-forward form on $\Omega_t$ and utilize Lemma \ref{hadamard}, that is, we determine the derivative of 
\begin{align*}
\langle\tilde{\mathbb E}((\tilde{\bu},p),t),({\bv},\pi) \rangle_{X^*\times X} =&\, \mathbb{A}(\tilde{\bu}\circ T_t^{-1};\tilde{\bu}\circ T_t^{-1},{\bv}\circ T_t^{-1})_{\Omega_t} - \frac{1}{2}\mathbb{C}(\tilde{\bu};\tilde{\bu},{\bv})- \frac{1}{2}({\bg}\cdot{\bn},{\bg}\cdot{\bv})_{\Gamma_{\rm out}}\\
	&+ d({\bv}\circ T_t^{-1},p\circ T_t^{-1})_{\Omega_t} + d(\tilde{\bu}\circ T_t^{-1},\pi\circ T_t^{-1})_{\Omega_t} + d({\bg}\circ T_t^{-1},\pi\circ T_t^{-1})_{\Omega_t}\\
	&- ({\blf} - ({\bg}\circ T_t^{-1})\cdot\nabla({\bg}\circ T_t^{-1}),{\bv}\circ T_t^{-1} )_{\Omega}+ \nu (\nabla({\bg}\circ T_t^{-1}),\nabla({\bv}\circ T_t^{-1}))_\Omega .
\end{align*}
We note that in the expression of $\tilde{\mathbb E}$ above, we added the term $d({\bg}\circ T_t^{-1},\pi\circ T_t^{-1})_{\Omega_t}$ since the divergence of $\bg$ is zero in $\Omega$. Denoting $\boldsymbol{\psi}_{\tilde{u}} = -(\nabla\tilde\bu)^\top{\bth}$, $\boldsymbol{\psi}_{g} = -(\nabla\bg)^\top{\bth}$, $\boldsymbol{\psi}_v = -(\nabla\bv)^\top{\bth}$, $\psi_p = -\nabla p\cdot{\bth}$, and $\psi_{\pi} = -\nabla\pi\cdot{\bth}$ yields
\begin{align*}
	\mathcal{D}_t\tilde{\mathbb E} = &\,  \nu (\nabla\tilde{\bu}:\nabla{\bv},{\bth}\cdot{\bn})_{\Gamma_{\rm f}}+ \nu a(\boldsymbol{\psi}_{\tilde{u}},{\bv})_{\Omega}+ \nu a(\tilde{\bu},\boldsymbol{\psi}_v)_{\Omega}  \\
										&+ ((\tilde{\bu}\cdot\nabla{\bg}) \cdot {\bv},{\bth}\cdot{\bn})_{\Gamma_{\rm f}} + b(\boldsymbol{\psi}_{\tilde{u}};{\bg},{\bv})_{\Omega} + b(\tilde{\bu};\boldsymbol{\psi}_{g},{\bv})_{\Omega}  + b(\tilde{\bu};{\bg},\boldsymbol{\psi}_v)_{\Omega}\\
										& + (({\bg}\cdot\nabla\tilde{\bu}) \cdot {\bv} ,{\bth}\cdot{\bn})_{\Gamma_{\rm f}} + b(\boldsymbol{\psi}_{g};\tilde{\bu},{\bv})_{\Omega}+ b({\bg};\boldsymbol{\psi}_{\tilde{u}},{\bv})_{\Omega} + b({\bg};\tilde{\bu},\boldsymbol{\psi}_v)_{\Omega}\\
										&+ ((\tilde{\bu}\cdot\nabla\tilde{\bu}) \cdot {\bv},{\bth}\cdot{\bn})_{\Gamma_{\rm f}} + b(\boldsymbol{\psi}_{\tilde{u}};\tilde{\bu},{\bv})_{\Omega} +  b(\tilde{\bu};\boldsymbol{\psi}_{\tilde{u}},{\bv})_{\Omega} + b(\tilde\bu;\tilde\bu,\boldsymbol{\psi}_v)_{\Omega}\\ 
										&+ (({\bg}\cdot\nabla{\bg}) \cdot {\bv},{\bth}\cdot{\bn})_{\Gamma_{\rm f}} + b(\boldsymbol{\psi}_{g};{\bg},{\bv})_{\Omega} +  b({\bg};\boldsymbol{\psi}_{g},{\bv})_{\Omega} + b({\bg};{\bg},\boldsymbol{\psi}_v)_{\Omega}\\ 
										& -(p\dive{\bv},{\bth}\cdot{\bn})_{\Gamma_{\rm f}}  + d({\bv},\psi_p)_{\Omega} + d(\boldsymbol{\psi}_v,p)_{\Omega}- (\pi\dive\tilde{\bu},{\bth}\cdot{\bn})_{\Gamma_{\rm f}}\\
										&   + d(\boldsymbol{\psi}_{\tilde{u}},\pi)_{\Omega} + d(\tilde{\bu},\psi_{\pi})_{\Omega} +d(\boldsymbol{\psi}_{g},\pi)_{\Omega} + d({\bg},\psi_{\pi})_{\Omega}  - ({\blf}\cdot{\bv},{\bth}\cdot{\bn})_{\Gamma_{\rm f}} \\
										& - ({\blf},\boldsymbol{\psi}_v)_{\Omega} +\nu(\nabla{\bg}:\nabla{\bv},{\bth}\cdot{\bn})_{\Gamma_{\rm f}}  + \nu a(\boldsymbol{\psi}_{g},{\bv})_{\Omega} + \nu a({\bg},\boldsymbol{\psi}_v) .
\end{align*}
Since $\tilde\bu = \bv = 0$ on $\Gamma_{\rm f}$,  and $\dive \tilde\bu = \dive\bv = 0$,  the integrals on the boundary $\Gamma_{\rm f}$ vanish except for $(\nabla\tilde{\bu}:\nabla{\bv},{\bth}\cdot{\bn})_{\Gamma_{\rm f}}$ and $(\nabla{\bg}:\nabla{\bv},{\bth}\cdot{\bn})_{\Gamma_{\rm f}}$.  Furthermore, since $({\bu},p)=(\tilde{\bu}+{\bg},p)$, we get
\begin{align*}
	\mathcal{D}_t\tilde{\mathbb E} = &\,  \nu (\nabla{\bu}:\nabla{\bv},{\bth}\cdot{\bn})_{\Gamma_{\rm f}}+ \nu a(\boldsymbol{\psi}_{u},{\bv})_{\Omega}+ \nu a({\bu},\boldsymbol{\psi}_v)_{\Omega}  \\
										&+ b(\boldsymbol{\psi}_{u};{\bu},{\bv})_{\Omega} + b({\bu};\boldsymbol{\psi}_{u},{\bv})_{\Omega}  + b({\bu};{\bu},\boldsymbol{\psi}_v)_{\Omega}\\
										& + d({\bv},\psi_p)_{\Omega} + d(\boldsymbol{\psi}_v,p)_{\Omega} + d(\boldsymbol{\psi}_{u},\pi)_{\Omega} + d({\bu},\psi_{\pi})_{\Omega}\\
										&    - ({\blf},\boldsymbol{\psi}_v)_{\Omega}.
\end{align*}

 To further simplify the expression above, we take into account the facts that $({\bu},p)$ satisfies \eqref{strong_nontranslated}, and $({\bv},\pi)$ on the other hand solves \eqref{strong_adjoint}. We begin the simplification on the terms with $\boldsymbol{\psi}_v$ and $\psi_{\pi}$, which we shall denote by $I_1$.
\begin{align*}
	I_1 =&\, \nu a({\bu},\boldsymbol{\psi}_v)_{\Omega} + b({\bu};{\bu},{\bu})_{\Omega} + d(\boldsymbol{\psi}_v,p)_{\Omega} + d({\bu},\psi_{\pi})_{\Omega} - ({\blf},\boldsymbol{\psi}_v)_{\Omega} \\
		= &\, (-\nu\Delta{\bu} +{\bu}\cdot\nabla{\bu} + \nabla p - {\blf},\boldsymbol{\psi}_v)_{\Omega}+(\nu\partial_{\bn}{\bu} - p{\bn},\boldsymbol{\psi}_v)_{\Gamma_{\rm f}}\\
		= &\, -((\nu\partial_{\bn}{\bu} - p{\bn})\cdot\partial_{\bn}{\bv}, {\bth}\cdot{\bn})_{\Gamma_{\rm f}}.
\end{align*}
The computation above utilized the divergence-free property of $\tilde{\bu}$ and the identity $(\nabla\bv)^\top{\bth} = \partial_{\bn}{\bv}({\bth}\cdot{\bn})$ on $\Gamma_{\rm f}$. Furthermore, the boundary integral on $\partial\Omega$ is simplified into just the integral on $\Gamma_{\rm f}$ since $\bth=0$ on $\partial\Omega\backslash\Gamma_{\rm f}.$ Similarly, if we denote by $I_2$ the terms containing $\psi_u$ and $\psi_p$, we have the following simplification,
\begin{align*}
	I_2 = &\, \nu a(\boldsymbol{\psi}_u,{\bv})_{\Omega} + b(\boldsymbol{\psi}_u;{\bu},{\bv})_{\Omega} +b({\bu};\boldsymbol{\psi}_u,{\bv})_{\Omega} + d({\bv},\psi_p)_{\Omega} + d(\boldsymbol{\psi}_u,\pi)_{\Omega} \\
	= &\, (-\nu\Delta{\bv} + (\nabla{\bu})^\top{\bv} + \nabla\pi, \boldsymbol{\psi}_u)_\Omega - b({\bu};{\bv},\boldsymbol{\psi}_u)_{\Omega} + ({\bu}\cdot{\bn},{\bv}\cdot\boldsymbol{\psi}_u)_{\partial\Omega}\\
	& + (\nu\partial_{\bn}{\bv}-\pi{\bn},\boldsymbol{\psi}_u)_{\partial\Omega}\\
	= &\, -(-\nu\Delta{\bv} + (\nabla{\bu})^\top{\bv} - {\bu}\cdot\nabla{\bv} + \nabla\pi, (\nabla{\bu})^\top{\bth})_\Omega - ((\nu\partial_{\bn}{\bv}-\pi{\bn})\cdot\partial_{\bn}{\bu},{\bth}\cdot{\bn} )_{\Gamma_{\rm f}}\\
	= &\, \gamma(\vec{\nabla}\times(\nabla\times{\bu}), (\nabla{\bu})^\top{\bth})_\Omega - ((\nu\partial_{\bn}{\bv}-\pi{\bn})\cdot\partial_{\bn}{\bu},{\bth}\cdot{\bn} )_{\Gamma_{\rm f}}\\
	= &\, \gamma(\nabla\times{\bu}, \nabla\times((\nabla{\bu})^\top{\bth}))_\Omega -\gamma((\nabla\times{\bu}){\btau}\cdot\partial_{\bn}{\bu},{\bth}\cdot{\bn})_{\Gamma_{\rm f}}\\& - ((\nu\partial_{\bn}{\bv}-\pi{\bn})\cdot\partial_{\bn}{\bu},{\bth}\cdot{\bn} )_{\Gamma_{\rm f}}.
\end{align*}
We note that the last equality is achieved using Green's curl identity \cite[Theorem 3.29]{monk2003}.  Combining the expressions obtained from simplifying $I_1$ and $I_2$, and since ${\bu}={\bv} = 0$ on $\Gamma_{\rm f}$, then we can further simplify $\mathcal{D}_t\tilde{\mathbb E}$ as follows:
\begin{align*}
	\mathcal{D}_t\tilde{\mathbb E} = &\, \nu (\nabla{\bu}:\nabla{\bv},{\bth}\cdot{\bn})_{\Gamma_{\rm f}} + I_1 + I_2\\ 
	= &\, \nu(\partial_{\bn}{\bu}\cdot\partial_{\bn}{\bv},{\bth}\cdot{\bn})_{\Gamma_{\rm f}} -((\nu\partial_{\bn}{\bu} - p{\bn})\cdot\partial_{\bn}{\bv}, {\bth}\cdot{\bn})_{\Gamma_{\rm f}}\\
	& +\gamma(\nabla\times{\bu}, \nabla\times((\nabla{\bu})^\top{\bth}))_\Omega -\gamma((\nabla\times{\bu}){\btau}\cdot\partial_{\bn}{\bu},{\bth}\cdot{\bn})_{\Gamma_{\rm f}}\\& - ((\nu\partial_{\bn}{\bv}-\pi{\bn})\cdot\partial_{\bn}{\bu},{\bth}\cdot{\bn} )_{\Gamma_{\rm f}}\\
	= &\, -(\partial_{\bn}{\bu}\cdot(\nu\partial_{\bn}{\bv}  + (\nabla\times{\bu}){\btau}) -\pi{\bn}\cdot\partial_{\bn}{\bu} - p{\bn}\cdot\partial_{\bn}{\bv},{\bth}\cdot{\bn})_{\Gamma_{\rm f}}\\
	& +\gamma(\nabla\times{\bu}, \nabla\times((\nabla{\bu})^\top{\bth}))_\Omega . 
\end{align*}
Since $\dive{\bu}=\dive{\bv}=0$ in $\Omega$, and ${\bu}={\bv} =0$ on $\Gamma_{\rm f}$, by utilizing the definition of tangential divergence \cite[p. 82]{sokolowski1992} the integrals $(\pi{\bn}\cdot\partial_{\bn}{\bu},{\bth}\cdot{\bn})_{\Gamma_{\rm f}}$ and $(p{\bn}\cdot\partial_{\bn}{\bv},{\bth}\cdot{\bn})_{\Gamma_{\rm f}}$ both equate to zero. Hence, we get the following derivative,
\begin{align}
\begin{aligned}
\frac{d}{dt}\langle\tilde{\mathbb E}((\tilde{\bu},p),t),({\bv},\pi) \rangle_{X^*\times X}\big|_{t=0} =&\, -(\partial_{\bn}{\bu}\cdot(\nu\partial_{\bn}{\bv}  + (\nabla\times{\bu}){\btau}),{\bth}\cdot{\bn})_{\Gamma_{\rm f}}\\
& +\gamma(\nabla\times{\bu}, \nabla\times((\nabla{\bu})^\top{\bth}))_\Omega.
\end{aligned}
\label{derE}
\end{align}

With the computation of the derivative above, we finally characterize the Eulerian derivative of $\mathcal{G}$ which is shown in the following theorem.

\begin{thrm}
	Let $\Omega\subset D$ satisfy \eqref{domainassumption}, ${\blf}\in L^2(\Omega)^2$, and ${\bg}\in H^2(\Omega)$ satisfying \eqref{gprop}. Suppose furthermore that \eqref{est:uniqueness} hold, and that $(\tilde{\bu},p),({\bv},\pi)\in (\bH_{\Gamma_0}^1(\Omega)\cap H^2(\Omega)^2)\times L^2(\Omega)$ are the unique solutions of \eqref{operator} and \eqref{adjoint}, respectively. Then for any ${\bth}\in{\bTh}$ the Eulerian derivative of $\mathcal{G}$ exists and is characterized as 
	\begin{align*}
		d\mathcal{G}(({\bu},p),\Omega){\bth} = \int_{\Gamma_{\rm f}}\Big[ \alpha\kappa - \frac{\gamma}{2}|\nabla\times{\bu}|^2 + \frac{\partial{\bu}}{\partial{\bn}}\Big(\nu\frac{\partial{\bv}}{\partial{\bn}} + \gamma(\nabla\times{\bu})\btau \Big)\Big]{\bth}\cdot{\bn} \du s,
	\end{align*}
	where ${\bu} = \tilde{\bu}+{\bg}\in H^2(\Omega)^2$, and $\btau$ is the unit tangential vector on $\Gamma_{\rm f}$.
\end{thrm}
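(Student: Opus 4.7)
The proof will be a direct application of the rearrangement lemma. The preceding discussion has already laid the groundwork for verifying hypotheses (A1)--(A5): (A1) holds by construction of the pulled-back operator $\tilde{\mathbb E}$; (A2) follows from the Fr\'echet derivative computation in Proposition~\ref{frechet} combined with the quadratic remainder estimate displayed just after; (A3) follows because the linearized problem inherits the same coercive/inf-sup structure as the state problem, yielding unique solvability by Lax--Milgram and Babu\v{s}ka--Brezzi arguments; (A4) is the displayed estimate in terms of $\|J_tM_t-I\|_\infty$ and $\|M_t-I\|_\infty$, combined with Lemma~\ref{holder}; and (A5) is immediate since the integrand is polynomial in $\nabla\bu$. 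The regularity hypothesis \eqref{domainassumption} together with the regularity of ${\bg}$ moreover provides the $H^2$-regularity of both $\tilde{\bu}$ and the adjoint $\bv$, hence the strong form \eqref{strong_adjoint}.

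Invoking the rearrangement formula \eqref{necessaryconditiongen} with $g({\bu})=-\tfrac{\gamma}{2}|\nabla\times{\bu}|^2$ yields
\begin{align*}
d\mathcal{G}(({\bu},p),\Omega){\bth} = -\mathcal{D}_t\tilde{\mathbb E} -\tfrac{\gamma}{2}\int_\Omega|\nabla\times{\bu}|^2\dive{\bth}\,\du x + \alpha\int_{\Gamma_{\rm f}}\kappa\,{\bth}\cdot{\bn}\,\du s.
\end{align*}
Substituting the expression for $\mathcal{D}_t\tilde{\mathbb E}$ from \eqref{derE} immediately produces the perimeter contribution $\alpha\kappa\,{\bth}\cdot{\bn}$ and the boundary contribution $\partial_{\bn}{\bu}\cdot(\nu\partial_{\bn}{\bv}+\gamma(\nabla\times{\bu})\btau){\bth}\cdot{\bn}$ appearing in the claimed formula. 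What remains is to show that the residual volume integrals
\begin{align*}
-\gamma(\nabla\times{\bu},\nabla\times((\nabla{\bu})^\top{\bth}))_\Omega-\tfrac{\gamma}{2}\int_\Omega|\nabla\times{\bu}|^2\dive{\bth}\,\du x
\end{align*}
collapse to the single boundary integral $-\tfrac{\gamma}{2}\int_{\Gamma_{\rm f}}|\nabla\times{\bu}|^2{\bth}\cdot{\bn}\,\du s$.

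The plan for this last reduction is to exploit three ingredients: the vector identity $\vec\nabla\times(\nabla\times{\bu})=-\Delta{\bu}$ (valid since $\dive{\bu}=0$), the 2D Green curl identity applied in the reverse direction to the derivation of \eqref{derE}, and the boundary identification $\nabla\times{\bu}=\partial_{\bn}{\bu}\cdot\btau$ on $\Gamma_{\rm f}$, which follows from ${\bu}=0$ on $\Gamma_{\rm f}$ forcing $\nabla{\bu}$ to be rank one and aligned with ${\bn}$. Combined with an integration by parts on the $|\nabla\times{\bu}|^2\dive{\bth}$ term and the vanishing of ${\bth}$ on $\partial\Omega\setminus\Gamma_{\rm f}$, these three facts reduce the residual to the advertised surface term. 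The main obstacle is precisely this final bookkeeping step: because the objective functional depends on $\nabla{\bu}$ rather than on ${\bu}$ pointwise, the extra contributions arising when the gradient is pulled back under $T_t$ interact with the term $\gamma(\nabla\times{\bu},\nabla\times((\nabla{\bu})^\top{\bth}))_\Omega$ already contained in $\mathcal{D}_t\tilde{\mathbb E}$, and one must track these carefully to verify that all interior quantities cancel, leaving only the pure $\Gamma_{\rm f}$-integral in the statement.
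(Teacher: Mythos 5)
Your proposal follows the same skeleton as the paper's proof: verify \emph{(A1)--(A5)} as in the preceding discussion, invoke the rearrangement formula \eqref{necessaryconditiongen} with $g = j({\bu}) = -\tfrac{\gamma}{2}|\nabla\times{\bu}|^2$, substitute \eqref{derE}, and then reduce the leftover volume integrals to a boundary integral on $\Gamma_{\rm f}$. However, the decisive step is exactly the one you do not carry out: showing that
\begin{align*}
-\gamma\big(\nabla\times{\bu},\,\nabla\times((\nabla{\bu})^{\top}{\bth})\big)_{\Omega} \;-\; \tfrac{\gamma}{2}\big(|\nabla\times{\bu}|^{2},\dive{\bth}\big)_{\Omega}
\;=\; -\tfrac{\gamma}{2}\int_{\Gamma_{\rm f}}|\nabla\times{\bu}|^{2}\,{\bth}\cdot{\bn}\,\du s .
\end{align*}
You call this ``final bookkeeping,'' state that ``one must track these carefully to verify that all interior quantities cancel,'' and stop there. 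But this cancellation is precisely the content the theorem's proof must supply beyond \eqref{derE}; leaving it as a plan is a genuine gap, since the passage from the intermediate expression \eqref{dG1} to the pure $\Gamma_{\rm f}$-integral is not a routine rearrangement and the identity is not pointwise (the integrands $\nabla\times((\nabla{\bu})^{\top}{\bth})$ and $({\bth}\cdot\nabla)(\nabla\times{\bu})$ differ pointwise, so an argument at the integral level is required).

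Moreover, the toolkit you propose for this reduction is a detour. The identity $\vec{\nabla}\times(\nabla\times{\bu})=-\Delta{\bu}$ and a second application of Green's curl identity play no role here (the curl--curl structure was already consumed in deriving $I_2$ and hence \eqref{derE}), and the boundary identification $\nabla\times{\bu}=\partial_{\bn}{\bu}\cdot\btau$ on $\Gamma_{\rm f}$, while true, is not what closes the argument. The paper's mechanism is shorter and structural: since $j$ is quadratic, $-\gamma(\nabla\times{\bu},\nabla\times((\nabla{\bu})^{\top}{\bth}))_{\Omega} = (j'({\bu}),(\nabla{\bu})^{\top}{\bth})_{\Omega}$, so the two residual volume terms combine into $\int_{\Omega}\dive\big(j({\bu}){\bth}\big)\du x$; the divergence theorem, the $H^{2}$-regularity of ${\bu}$ guaranteed by \eqref{domainassumption}, and the fact that ${\bth}=0$ on $\partial\Omega\setminus\Gamma_{\rm f}$ then yield the single surface term $-\tfrac{\gamma}{2}\int_{\Gamma_{\rm f}}|\nabla\times{\bu}|^{2}\,{\bth}\cdot{\bn}\,\du s$. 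To complete your proof you would need either to reproduce this Hadamard-structure argument or to actually perform and justify the cancellation you only announce.
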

\begin{proof}
	The existence of the Eulerian derivative is implied due to the satisfaction of properties {\it (A1)-(A5)}.  Substituting \eqref{derE} into \eqref{necessaryconditiongen} with $\tilde{E}=\tilde{\mathbb E}$, and $y = (\tilde{u},p)$, we get
	\begin{align}
	\begin{aligned}
		d\mathcal{G}(({\bu},p),&\Omega){\bth} =(\alpha\kappa+\partial_{\bn}{\bu}\cdot(\nu\partial_{\bn}{\bv}  + (\nabla\times{\bu}){\btau}),{\bth}\cdot{\bn})_{\Gamma_{\rm f}}\\
& -\gamma(\nabla\times{\bu}, \nabla\times((\nabla{\bu})^\top{\bth}))_\Omega + (j({\bu}),\dive{\bth})_{\Omega},
\end{aligned}
\label{dG1}
	\end{align}
where $j({\bu})$ is such that
\[ J({\bu},\Omega) = \int_{\Omega} j({\bu}) \du x = - \frac{\gamma}{2}\int_{\Omega} |\nabla\times{\bu}|^2\du x.\]
Since $J$ is quadratic in nature, its Fr{\'e}chet derivative at ${\bu}\in H^1(\Omega)^2$ in the direction $\delta{\bu}\in H^1(\Omega)^2$ can be computed as
\[ \langle J'({\bu},\Omega),  \delta{\bu}\rangle_{(H^1(\Omega)^2)'\times H^1(\Omega)^2} = (j'({\bu}), \delta{\bu})_{\Omega} = -\gamma(\nabla\times{\bu}, \nabla\times\delta{\bu})_{\Omega}.
 \]
Furthermore, we note that 
\[\int_{\Omega} \dive(j(\bu){\bth})\du x = (j'({\bu}),\nabla{\bu}^{\top}{\bth})_{\Omega} + (j({\bu}), \dive{\bth})_{\Omega}. \]

From the two previous identities, we rewrite the second line in \eqref{dG1} as follows:
\begin{align*}
-\gamma(\nabla\times{\bu}, \nabla\times((\nabla{\bu})^\top{\bth}))_\Omega + (j({\bu}),\dive{\bth})_{\Omega}&= (j'({\bu}), \nabla{\bu}^{\top}{\bth})_{\Omega} + (j({\bu}),\dive{\bth})_{\Omega} \\
& = \int_{\Omega} \dive(j({\bu}){\bth})\du x.
\end{align*}
Therefore,  from the assumed regularity of the domain, and by employing divergence theorem, we obtain
	\begin{align*}
		d\mathcal{G}(({\bu},p),\Omega){\bth}=\int_{\Gamma_{\rm f}}\Big[ \alpha\kappa - \frac{\gamma}{2}|\nabla\times{\bu}|^2 + \frac{\partial{\bu}}{\partial{\bn}}\Big(\nu\frac{\partial{\bv}}{\partial{\bn}} + \gamma(\nabla\times{\bu})\btau \Big)\Big]{\bth}\cdot{\bn} \du s.
	\end{align*}

\end{proof}

\begin{rmrk}
(i) The shape derivative of $\mathcal{G}$ as we have formulated it agrees with the Zolesio-Hadamard {\it Structure Theorem} \cite[Corollary 9.3.1]{zolesio2011}, in it we were able to write the derivative in the form
\[ d\mathcal{G}((\bu,p),\Omega){\bth} = \int_{\Gamma_{\rm f}} \nabla{G}({\bth}\cdot{\bn}) \du s.\]
In this case, we shall call $\nabla G$ the shape gradient of the objective functional. Furthermore, this form gives us an intuitive gradient descent direction given by ${\bth} = -\nabla G$.  This fact -- together with the challenges with this chosen direction -- will further be explored in the subsequent parts of the paper.

{(ii) If one observes the adjoint equation \eqref{strong_adjoint}, we can easily see why \eqref{dirdonothing} will not be easily handled. In fact, if such condition is imposed instead of \eqref{conboundcond}, it would be impossible to write the adjoint equation in its strong form. Furthermore, the weak form would include the term $({\bphi}\cdot{\bn})_{-}$, where ${\bphi}$ is a test function. This expression is quite hard to treat numerically due to its discontinuous nature.}
\label{remark:endof3}
\end{rmrk}

\section{Numerical Realization}\label{sect5}
	
	We shall discuss the numerical implementation of the shape optimization problem in this section.  We start by discussing the resolution on solving the nonlinearity on the state equations,  then we proceed by introducing gradient descent methods based on the Eulerian derivative of the objective functional.  The said gradient descent methods will include the rectification of the volume preservation issue, as required in the formulation of the problem.  Lastly, we shall show the convergence -- with respect to domain discretization --  of the final shapes to a manufactured solution in terms of the final deformation fields and in terms of the Hausdorff distance.

\subsection{Newton implementation of the Navier--Stokes equations}
	The nonlinearity is one of the challenges not only in the analysis but also in the numerical implementation of the Navier--Stokes equations. In this subsection, we shall discuss how this is resolved by means of a Newton's method.
	
	We begin by reiterating the fact that if $(\tilde{\bu},p)\in X$ is the unique solution of \eqref{strong_nontranslated}, then for any $\mathcal{F}\in X^*$, there exists a unique solution $(\delta\!\bu,\delta p)\in X$ to 
\[ \langle \mathbb E'(\tilde{\bu},p)(\delta{\bu},\delta p),({\bphi},\psi)\rangle_{X^*\times X} = \langle \mathcal{F},({\bphi},\psi)\rangle_{X^*\times X},\quad\forall({\bphi},\psi)\in X.\]
This implies that $\mathbb E'(\tilde{\bu},p)\in\mathcal{L}(X,X^*)$ is an isomorphism.
Furthermore, by the inverse function theorem there exists a closed ball $\mathcal{X}((\tilde{\bu},p);\varepsilon)$ centered at $(\tilde{\bu},p)\in X$ with radius $\varepsilon>0$ such that $(\tilde{\bu},p)$ is an isolated nonsingular solution.

With all the facts presented above,  we propose the following Newton's algorithm: we start with an initial element $(\tilde{\bu}^0,p^0)$, and we generate the following sequence $\{(\tilde{\bu}^k,p^k) \}_k$ using the difference equation
\[(\tilde{\bu}^{k+1},p^{k+1}) = (\tilde{\bu}^k,p^k) - [\mathbb E'(\tilde{\bu}^k,p^k)]^{-1}\cdot\mathbb E(\tilde{\bu}^k,p^k) \]
or equivalently, denoting $(\delta\tilde{\bu}^{k+1},\delta p^{k+1}) = (\tilde{\bu}^{k+1}-\tilde{\bu}^{k},p^{k+1}-p^{k})$,
\begin{align}
	\langle\mathbb E'(\tilde{\bu}^k,p^k)(\delta\tilde{\bu}^{k+1},\delta p^{k+1}),({\bphi},\psi)\rangle_{X^*\times X} = - \langle\mathbb E(\tilde{\bu}^k,p^k),({\bphi},\psi)\rangle_{X^*\times X},
	\label{weak:newton}
\end{align}
	for all $({\bphi},\psi)\in X$.  In strong form, by letting ${\bu}^k = \tilde{\bu}^k + {\bg}$,  we can write \eqref{weak:newton} as 
	\begin{align}
		\left\{
		\begin{aligned}
			-\nu\Delta{\bu}^{k+1} + ({\bu}^{k+1}\cdot\nabla){\bu}^k + ({\bu}^k\cdot\nabla){\bu}^{k+1} + \nabla p^{k+1}  & = {\bu}^k\cdot\nabla{\bu}^k + {\blf} && \text{in }\Omega,\\
			\dive{\bu}^{k+1} &= 0&& \text{in }\Omega,\\ 
			{\bu}^{k+1} &= {\bg} && \text{on }\Gamma_{\rm in},\\
			{\bu}^{k+1} &= 0&& \text{on }\Gamma_{\rm f}\cup\Gamma_{\rm w},\\
			-p^{k+1}{\bn} +\nu\partial_{\bn}{\bu}^{k+1} - \frac{1}{2}[({\bu}^{k+1}\cdot{\bn}){\bu}^k + ({\bu}^{k}\cdot{\bn}){\bu}^{k+1}]   &= - \frac{1}{2}({\bu}^{k}\cdot{\bn}){\bu}^k && \text{on }\Gamma_{\rm out}.
			\end{aligned}
		\right.
		\label{newtonaprrox}
	\end{align}
	
	For a given $\blf\in L^2(\Omega)^2$, and ${\bg}\in H^2(\Omega)^2$, one can show -- by following the arguments as in \cite{girault1986} -- that if ${\bu}\in{\bV}\cap H^2(\Omega)^2 $ is the solution to \eqref{strong_nontranslated} there exists $c<1$ such that the following convergence estimate holds
	\begin{align} 
	\|{\bu}^{k+1} - {\bu} \|_{\bV(\Omega)} \le c\|{\bu}^{k} - {\bu} \|_{\bV(\Omega)}.
	\label{approx:newton}	
	\end{align}
	
	Solving numerically, we shall approximate the solution to \eqref{strong_nontranslated} using \eqref{newtonaprrox}, with the stopping criterion $\|{\bu}^{k+1} - {\bu}^k \|_{\bV(\Omega)} < \varepsilon$ for sufficiently small $\varepsilon>0$.
	
	It is noteworthy to mention that Newton's method for the Navier--Stokes equations with \eqref{conboundcond} is naturally constructed thanks to the absence of $(\cdot)_{-}$, on the other hand the linearization will not be easily handled if \eqref{dirdonothing} is used.
	
\subsection{Gradient descent methods for deformation fields}
As mentioned before, in this part we discuss gradient methods based on the Eulerian derivative of the objective functional. Furthermore, due to the volume constraint we shall utilize two methods, namely the use of an augmented Lagrangian method based on \cite{nocedal2006}, and of divergence-free deformation fields.
\subsubsection{Augmented Lagrangian method}

Note that the optimization problem can be written as the equality constrained optimization by
\[ \min_{\Omega\in\Oad} \mathcal{G}(\Omega)\text{ subject to }\mathcal{F}(\Omega) := |\Omega| -m = 0. \]
With this reason, we formulate the augmented Lagrangian given as
\[ \mathcal{L}(\Omega,\ell,b) = \mathcal{G}(\Omega) - \ell\mathcal{F}(\Omega) + \frac{b}{2}\mathcal{F}(\Omega)^2, \]
	where $\ell>0$ is a Lagrangian multiplier, and $b>0$ is a regularizing parameter.  We note that the quadratic term -- aside from its regularizing effect -- acts as a more strict penalizing term as compared to the usual Lagrangian methods. This method was formalized in the context of shape optimization by Dapogny, C. et. al.\cite{dapogny2018}.  So to minimize the objective functional while not neglecting the constraint $\mathcal{F} = 0$, we instead minimize the augmented Lagrangian $\mathcal{L}$.
	
	By following the same arguments as for solving the Eulerian derivative of $\mathcal{G}$, we can formulate the derivative of $\mathcal{L}$ and is solved as
	\[ \mathcal{L}(\Omega,\ell,b){\bth} = \int_{\Gamma_{\rm f}} \nabla L({\bth}\cdot{\bn})\du s, \]
	where $\nabla L = \nabla G - \ell + b(|\Omega| - m).$
	
\subsubsection{Smooth extensions of the deformation fields and the unit normal vector}

As mentioned in Remark 5, an intuitive gradient descent direction on the boundary $\Gamma_{\rm f}$ is either ${\bth} = -\nabla G$, or ${\bth}=-\nabla L$ if one chooses to minimize the augmented Lagrangian. However, this choice of ${\bth}$ may cause irregularities to the deformed boundary $\Gamma_{\rm f}$. For this reason, methods of approximating the deformation field ${\bth}$ and extending it to the domain $\Omega$ have been proposed, for example in \cite{azegami2006} the authors developed a seminal approach on such smoothing method. In our current problem, we shall adapt such smoothing extension for minimizing the augmented Lagrangian. In particular, we shall be tasked to solve for ${\bth}\in H^1_{\bTh}(\Omega)^2 := \{{\bphi}\in H^1(\Omega)^2; {\bphi}=0\text{ on }\partial\Omega\backslash\Gamma_{\rm f} \}$ that solves the Robin equation
\begin{align}
	\varepsilon(\nabla{\bth},\nabla{\bphi})_{\Omega} + ({\bth},{\bphi})_{\Gamma_{\rm f}} = -(\nabla L{\bn},{\bphi})_{\Gamma_{\rm f}},\quad \forall{\bphi}\in H^1_{\bTh}(\Omega)^2.
	\label{defo:lagrange}
\end{align}
Meanwhile, we shall also utilize what was proposed in \cite{simon2021} by minimizing the objective functional itself but uses divergence-free deformation fields, i.e., we shall solve for $({\bth},\vartheta)\in H^1_{\bTh}(\Omega)^2\times L^2(\Omega)^2$ that solves the following Stokes equation
\begin{align}
	\left\{
		\begin{aligned}
			\varepsilon_1(\nabla{\bth},\nabla{\bphi})_{\Omega} + ({\bth},{\bphi})_{\Gamma_{\rm f}} - \varepsilon_2(\vartheta,\dive{\bphi})_\Omega & = -(\nabla G{\bn},{\bphi})_{\Gamma_{\rm f}}\\
			(\psi,\dive{\bth})_\Omega & = 0
		\end{aligned}
	\right.
	\label{defo:divfree}
\end{align}
for all $({\bphi},\psi)\in H^1_{\bTh}(\Omega)^2\times L^2(\Omega)^2$. We note that on both methods $\varepsilon,\varepsilon_1,\varepsilon_2>0$ are chosen sufficiently small, so that $-\varepsilon\partial_{\bn}{\bth}+{\bth} = -\nabla L$ and $-\varepsilon_1\partial_{\bn}{\bth}+{\bth} + \varepsilon_2\vartheta{\bn} = -\nabla G$ on $\Gamma_{\rm f}$, and hence are respective approximations for $\bth = -\nabla L$ and $\bth = -\nabla G$ on $\Gamma_{\rm f}$.

Using the same narrative as above, we shall determine a smooth extension of the unit normal vector $\bn$ on $\Gamma_{\rm f}$, by finding the vector $\bN$ that solves 
\begin{align}
	\varepsilon_3(\nabla{\bN},\nabla{\bphi})_\Omega + ({\bN},{\bphi})_{\Gamma_{\rm f}}= ({\bn},{\bphi})_{\Gamma_{\rm f}}, \quad \forall{\bphi}\in H_{\bTh}^1(\Omega)^2.
	\label{normal:ext}
\end{align}
	The impetus for solving for such extension is the appearance of the mean curvature $\kappa$ in the shape gradients.  According to \cite[Proposition 5.4.8]{henrot2018}, the mean curvature of the surface $\Gamma_{\rm f}$ may be determined using the identity $\kappa = \dive_{\Gamma_{\rm f}}{\bn}= \dive{\bN}$ for any extension ${\bN}$, given that the surface is sufficiently smooth. Since our domain is $C^{1,1}$-regular, we have the freedom to apply such identity.
	
\subsubsection{Gradient descent iterations}

Having been able to lay out the necessary ingredients, we finally write here the iterative scheme of approximating the shape solution.  
Recall that we introduced the domain variations that utilizes the identity perturbation operator. One of the reason why we take advantage of such perturbation is its usability and simplicity for iterative numerical methods.  In particular,  for a given initial domain $\Omega_0$ we shall generate a sequence $\{\Omega_k\}_k$ by means of the difference equation $\Omega_{k+1} = \Omega_k + t^k{\bth}^k.$ In this iteration, we note that the only changing boundary is the free-boundary $\Gamma_{\rm f}$, so by denoting the k$^{th}$ iteration of the mentioned boundary by $\Gamma_{\rm f}^k$ and the k$^{th}$ iteration of $\nabla G$ (or $\nabla L$) by $\nabla G^k$ (resp. $\nabla L^k$), i.e., the states and the mean curvature are all evaluated in $\Omega_k$ instead of $\Omega$, and the deformation fields ${\bth}^k$ are determined by solving either \eqref{defo:lagrange} or \eqref{defo:divfree}, but with $\Omega_k$, $\Gamma_{\rm f}^k$ and $\nabla G^k$ (resp. $\nabla L^k$) instead of $\Omega$, $\Gamma_{\rm f}$, and $\nabla G$ (resp. $\nabla L$), respectively. 

The time step on the other hand, just like most gradient descent methods, is determined using a line search method, i.e., for fixed parameters $0<\sigma_1,\sigma_2 < 1$, $j\in\{0,1,2,\ldots \}$, and by denoting 
\begin{align}
\hat{t}_j^k = (\sigma_2)^j\frac{\sigma_1\mathcal{G}(\Omega_k)}{\|{\bth}^k\|^2_{L^2(\Gamma_{\rm f})^2}},\label{timestep}
\end{align}
(or $\mathcal{L}(\Omega_k)$ for the case of the augmented Lagrangian ), we take  $t^k = \hat{t}_j^k$, where $j$ is the smallest integer such that $\mathcal{G}(\Omega_k + \hat{t}_j^k{\bth}^k) < \mathcal{G}(\Omega_k )$ ( resp. $\mathcal{L}(\Omega_k + \hat{t}_j^k{\bth}^k) < \mathcal{L}(\Omega_k )$ ), and such that the perturbed domain $\Omega_k + \hat{t}_j^k{\bth}^k$ does not exhibit mesh degeneracy.
	 
	Lastly, the parameters $\ell,b>0$ in the augmented Lagrangian will also be updated iteratively based on \cite[Framework 17.3]{nocedal2006}. Given initial parameters $\ell_0,b_0>0$, we generate the iterates $\ell_k,b_k$ using the following rules
\begin{align}
		\ell_{k+1} = \ell_k - b_k\mathcal{F}(\Omega_k),\quad b_{k+1} = \begin{cases}\tau b_k & \text{if }b_k<\overline{b}\\ b_k &\text{otherwise }\end{cases},
		\label{auglangparams}
\end{align}	 
where $\tau>1$ and $\overline{b}>0$ are given parameters.

Summarizing these methods, the algorithm\footnote{When we refer to {\it steps} we mean them to be inside a \texttt{for} loop.} below provides the steps for solving an approximate shape solution:
\begin{pethau}
	\item[{\bf Initialization:}] Choose the parameters $\nu,\alpha, \gamma,\varepsilon,\varepsilon_1,\varepsilon_2,\varepsilon_3,\sigma_1,\sigma_2,\tau$ and $\overline{b}$; initialize the domain $\Omega_0$, the solution ${\bu}_0$ (using Newton's method \eqref{newtonaprrox}), and the parameters $\ell_0$ and $b_0$ (only if the augmented Lagrangian is being implemented).
	\item[{\bf Step 1:}] Evaluate $\mathcal{G}(\Omega_k)$ (or $\mathcal{L}(\Omega_k)$); solve for the adjoint variable ${\bv}_k$ from \eqref{strong_adjoint}, the mean curvature $\kappa_k$ from \eqref{normal:ext}, and the deformation field ${\bth}^k$ using \eqref{defo:divfree}, or \eqref{defo:lagrange} in the case of the augmented Lagrangian method.
	\item[{\bf Step 2:}] Set $\Omega_{k+1} = \Omega_k + t^k{\bth}^k$, where $t^k$ is chosen appropriately as discussed above.
	\item[{\bf Step 3:}] Solve the new solution ${\bu}_{k+1}$ and evaluate $\mathcal{G}(\Omega_{k+1})$ (or $\mathcal{L}(\Omega_{k+1})$); if $|\mathcal{G}(\Omega_{k+1})-\mathcal{G}(\Omega_{k})| < \texttt{tol}$ (resp. $|\mathcal{L}(\Omega_{k+1})-\mathcal{L}(\Omega_{k})| < \texttt{tol}$) then $\Omega_{k+1}$ is accepted as the approximate solution.
	\item[{\bf Step 4:}] (Only for augmented Lagrangian method) Update the parameters $\ell_{k+1},b_{k+1}$ using \eqref{auglangparams}. 
\end{pethau}

	\subsection{Numerical Examples}
	This part shows examples of implementations of the algorithm previously discussed. We start with implementing the augmented Lagrangian method, and show its volume preserving limitations by simulating different values of the Lagrangian multiplier $\ell_0$. We then simulate examples using the divergence-free deformation fields, and compare the solutions with the augmented Lagrangian method based on the convergence rate (number of iterations) and the volume preserving property. Lastly, we end with showing convergence of shape solutions to a manufactured solution based on the Hausdorff measure on the boundaries and on the H$^1$-convergence of the deformation fields.
	
	The simulations were all ran using FreeFem++ \cite{hecht2012} on a  Intel Core i7 CPU $@$ 3.80 GHz with 64GB RAM.  The state, adjoint, and the resolution of the deformation fields (for the divergence-free deformation fields) are solved using triangular Taylor-Hood (P2-P1) finite elements, while the resolution of the mean curvature and of the deformation fields using the augmented Lagrangian method are respectively solved using P1 and P2 finite elements, all of which are solved with the UMFPACK solver. The input function ${\bg}$ is a Poiseuille-like function given by $\bg =(1.2(0.25 - x_2^2),0)^\top$ on $\Gamma_{\rm in}$,  for simplicity the external force is taken as ${\blf} = 0$, the viscosity parameter is chosen as $\nu = 1/100$, and the Tikhonov parameter is chosen as $\alpha = 7$ to ensure that $\mathcal{G}(\Omega^*)\approx 0$ at the solution domain $\Omega^*$.
	As for the domain, we consider a rectangular outer boundary and an circular  initial free(inner)-boundary, as shown in Figure \ref{initialshape}.
	\begin{figure}[h!]
 \centering
  \includegraphics[width=.8\textwidth]{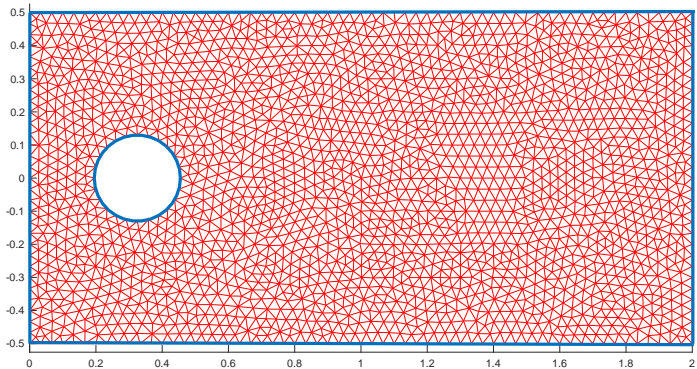}\vspace{-.1in}
 \caption{{The figure shows the initial set-up of the domain. The rectangular outer boundary has vertices located at $(0,1/2)$, $(0,-1/2)$, $(2,1/2)$, and $(2,-1/2)$, while the inner boundary is a circle centered at $(0.325,0)$ with radius $r = 0.13$.} }
 \label{initialshape}
 \end{figure}
 
 	Meanwhile, the domain variation based on the deformation fields will be dealt with by using the \texttt{movemesh} command in FreeFem++, and the possible mesh degeneracy will be circumvented (aside from the choice of the step size) by utilizing the combination of the \texttt{checkmovemesh} and \texttt{adaptmeshmesh} commands.
 	
 	Lastly, we mention that for the first two subparts of this subsection the mesh size will be taken uniformly and has size $h = 1/60$, i.e., the diameter of all the triangles in the domain triangulation will be taken as $1/60$. Furthermore, the tolerance for the stopping criterion of shape approximation is decided to be $\texttt{tol}=10^{-4}$.

	\subsubsection{Augmented Lagrangian method}
	
 We start the implementation of the augmented Lagrangian with $\ell_0 = 54$ since this value is relatively effective in our goal of preserving the volume of the domain as compared to other values of $\ell_0$ that we simulated. We note also that in some figures we shall utilize scaling of the computed values, as such scaling is done in a way that the maximum value is at $y = 1$, and the minimimum value is at $y =0$, i.e.,for an iteration $k$ the value in the trend is computed as $\frac{F(\Omega_k) - min_{j}(F(\Omega_j))}{max_{j}(F(\Omega_j))-min_{j}(F(\Omega_j))}$, where $F$ is either the objective function or the volume.
 
\begin{figure}[h!]
 \centering
  \includegraphics[width=.8\textwidth]{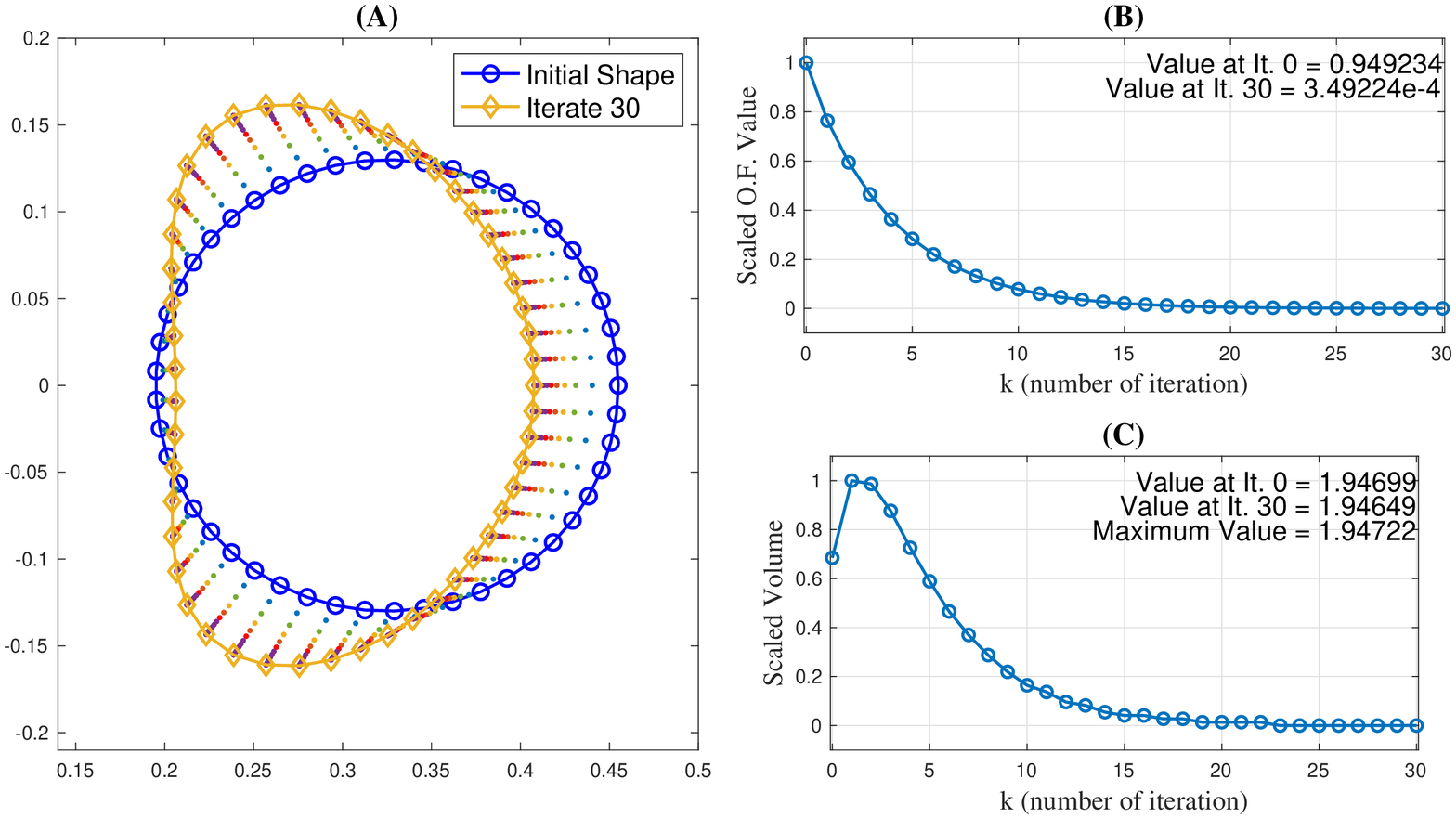}\vspace{-.1in}
 \caption{Evolution of the free-boundary using the augmented Lagrangian method with $\ell_0=54$(A); and the scaled trends of the objective function (B) and of the volume of the domain (C), where the $x$-axis corresponds to the number of iteration.}
 \label{evo:auglang}
 \end{figure}
	
	As shown in Figure \ref{evo:auglang}(A), the circular initial shape evolves into a  bean-shaped surface but exhibits more convexity than the usual bean shape. The method converges after thirty iterations, where -- as seen in Figure \ref{evo:auglang}(B) -- the value of the objective functional starts with the value $\mathcal{G}(\Omega_0)= 0.949$ and converges to the approximate solution with value $\mathcal{G}(\Omega_{30})= 3.49\times10^{-4}$. With regards to the volume preservation, we observe a sudden increase on the first iteration. Nevertheless, the sudden increase on the first iteration is corrected starting from the second iteration, this is caused by the manner by which we update the Lagrange multiplier $\ell$ and the regularizing parameter $b$. However, the decrease continues until the volume, which started with the value $|\Omega_0| = 1.94699$, is decreased up to the volume of $|\Omega_{30}|= 1.94649$. This decrease on the volume would still be considered a fair volume preservation, with the relative percentage difference\footnote{The relative percentage difference is computed with respect to the initial volume, i.e., $\frac{||\Omega_0|- |\Omega_{\rm final}||}{|\Omega_0|}\times 100\%$.} equal to $2.568\times10^{-2}\%.$ 
		
		\begin{figure}[h!]
 \centering
  \includegraphics[width=.8\textwidth]{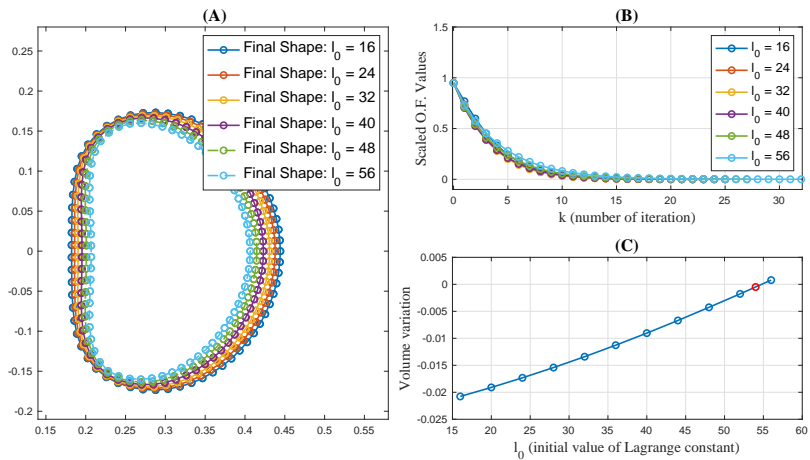}\vspace{-.1in}
 \caption{Final shapes of the boundary $\Gamma_{\rm f}$ for different values of $\ell_0$(A); trends of the objective functional values on each iteration for different values of $\ell_0$(B); variation of the volume from the initial domain to the final shape plotted against the value of $\ell_0$ (C).}
 \label{varl0}
 \end{figure}

	For the next illustration, we show the effects of varying the initial value of the Lagrange multiplier $\ell_0$.  From Figure \ref{varl0}(A), the domain bounded by the surface $\Gamma_{\rm f}$ gets larger as the value of $\ell_0$ decreases. This observation implies that the stringency of the equality constraint $\mathcal{F}(\Omega) = 0$ is stronger as the value of the Lagrange multiplier $\ell_0$ increases, but the satisfaction of the said constraint is only up to a maximum value. This can be observed in Figure \ref{varl0}(C) where, starting from $\ell_0=16$ with an increment of four, the variation\footnote{The variation is computed as $|\Omega_0|-|\Omega_{\rm final}|$.} increases from around -0.02 to the value zero. However, at $\ell_0=56$ the variation becomes positive and thus the constraint becomes too constricting which -- when higher values of $\ell_0$  are simulated -- causes the domain inside the surface $\Gamma_{\rm f}$ to become smaller, and fails the volume constraint. We also plotted with a differently colored hollow point the variation from the initial volume to the volume of the final shape for $\ell_0=54$ which was the parameter value chosen in our first illustration. From the trend of the variational line we can make a conjecture that, given all other parameters are taken constant, the best value of the Lagrange multiplier that will satisfy $\mathcal{F} = 0$ is around $\ell_0=55$.  
	We also observe in Figure \ref{varl0}(B) that even though all objective function trends converge to zero, the decrease for $\ell_0=56$ is the least steep. 
	This is because when the value of $\ell_0$ is smaller, the shape is more allowed to change and thus the algorithm is more relaxed to immediately decrease the value of the objective functional.

	\subsubsection{Divergence-free method}
	
	For the divergence-free method,  we get the freedom from tediously choosing an appropriate parameter $\ell_0$ that will give a volume preserving deformation fields.  
	\begin{figure}[h!]
 \centering
  \includegraphics[width=.8\textwidth]{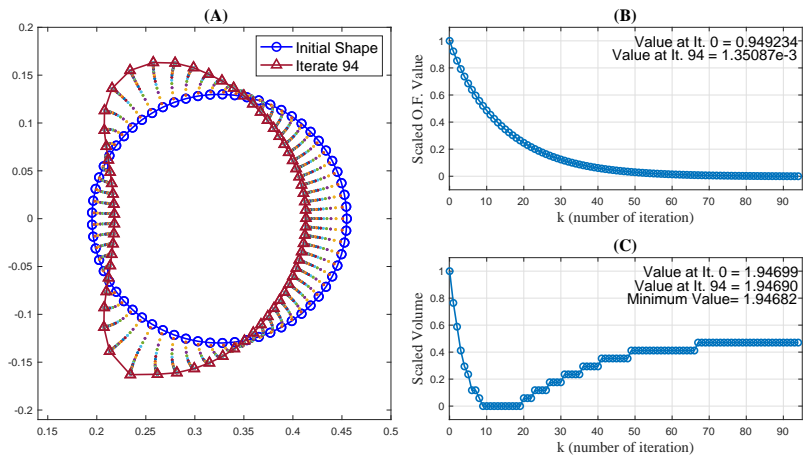}\vspace{-.1in}
 \caption{Evolution of the free-boundary using the divergence-free method (A); and the scaled trends of the objective function (B) and of the volume of the domain (C), where the $x$-axis corresponds to the number of iteration.}
 \label{evo:divfree}
 \end{figure}
 
 We can also observe a bean-shaped surface develop as we go further in each iteration (see Figure \ref{evo:divfree}(A)).  Contrary to the shape generated by the augmented Lagrangian method, the surface obtained in this current method bounds a domain whose convexity on the left side is lost. Nevertheless, as we can see from Figure \ref{evo:divfree}(B), the objective functional tends to zero as the method reaches its tolerance. Although, a drawback in this method is the apparent {\it slow} convergence, as the method converges only after 94 iterations. Fortunately, the volume preservation in this current method is better -- in most cases -- as compared to the augmented Lagrangian, where the relative percentage change in the volume is computed as $4.623\times 10^{-3}\%$. The only case in the augmented Lagrangian method that can match this volume preservation is when $\ell_0=55$, with relative percentage change $7.19\times10^{-3}$ (see Figure \ref{evo:auglang55}).
	\begin{figure}[h!]
 \centering
  \includegraphics[width=.8\textwidth]{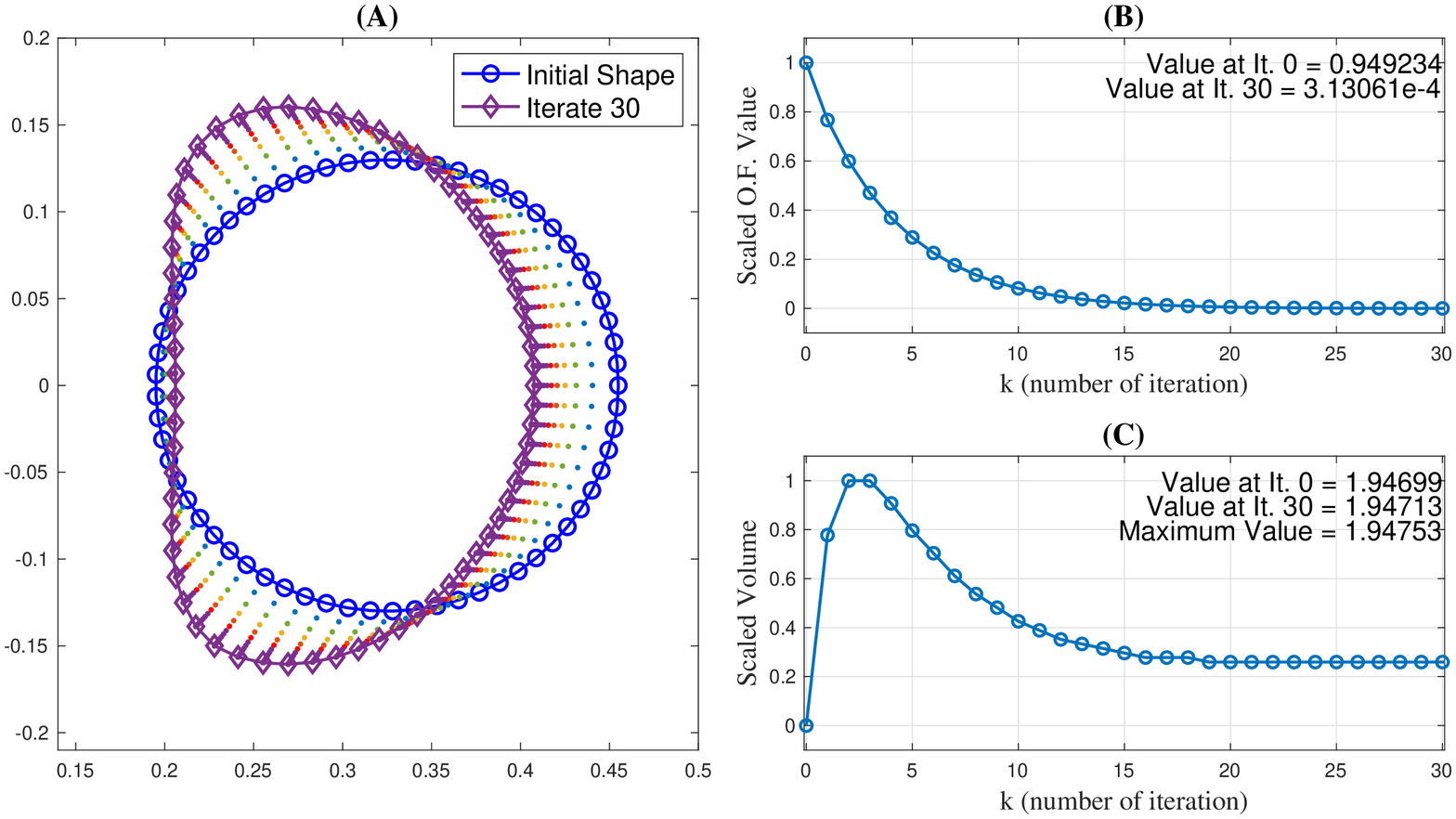}\vspace{-.1in}
 \caption{Evolution of the free-boundary using the aigmented Lagrangian method with $\ell_0=55$ (A); and the scaled trends of the objective function (B) and of the volume of the domain (C), where the $x$-axis corresponds to the number of iteration.}
 \label{evo:auglang55}
 \end{figure}
	 
	 To see even further the difference between the augmented Lagrangian and the divergence-free method with regards to the final shapes, objective value and volume trends, we refer to Figure \ref{comp:augvsdf}.
\begin{figure}[h!]
 \centering
  \includegraphics[width=.8\textwidth]{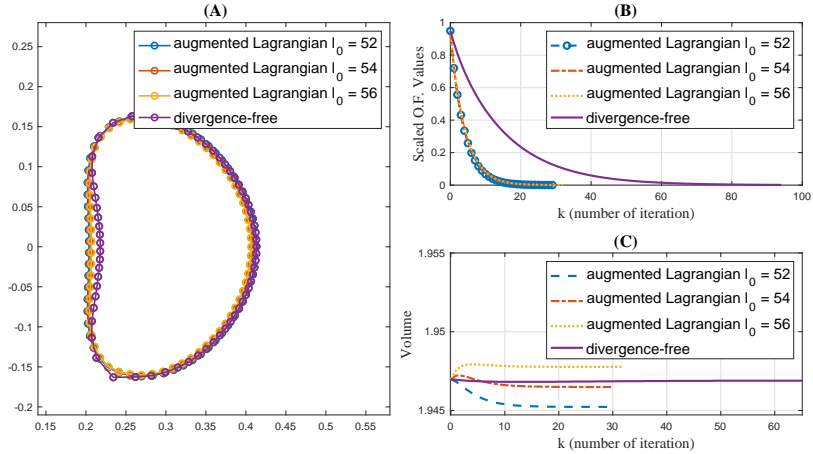}\vspace{-.1in}
 \caption{Comparison of the final shapes (A), the trends of the objective function (B) and of the volume of the domain (C),  between the divergence-free method and the augmented Lagrangian method.}
 \label{comp:augvsdf}
\end{figure}

	Before we move on, let us first verify if we are fulfilling one of the main goals of the problem which is maximizing the vorticity of the fluid. Even though we are observing a decrease in the objective functional in Figures \ref{evo:auglang}(B) and \ref{evo:divfree}(B), it is important that we confirm that the descent does not only affect the perimeter of the free-boundary but also minimizes the negative of the $L^2$-norm of the curl of the velocity field. 
\begin{figure}[h!]
 \centering
  \includegraphics[width=.5\textwidth]{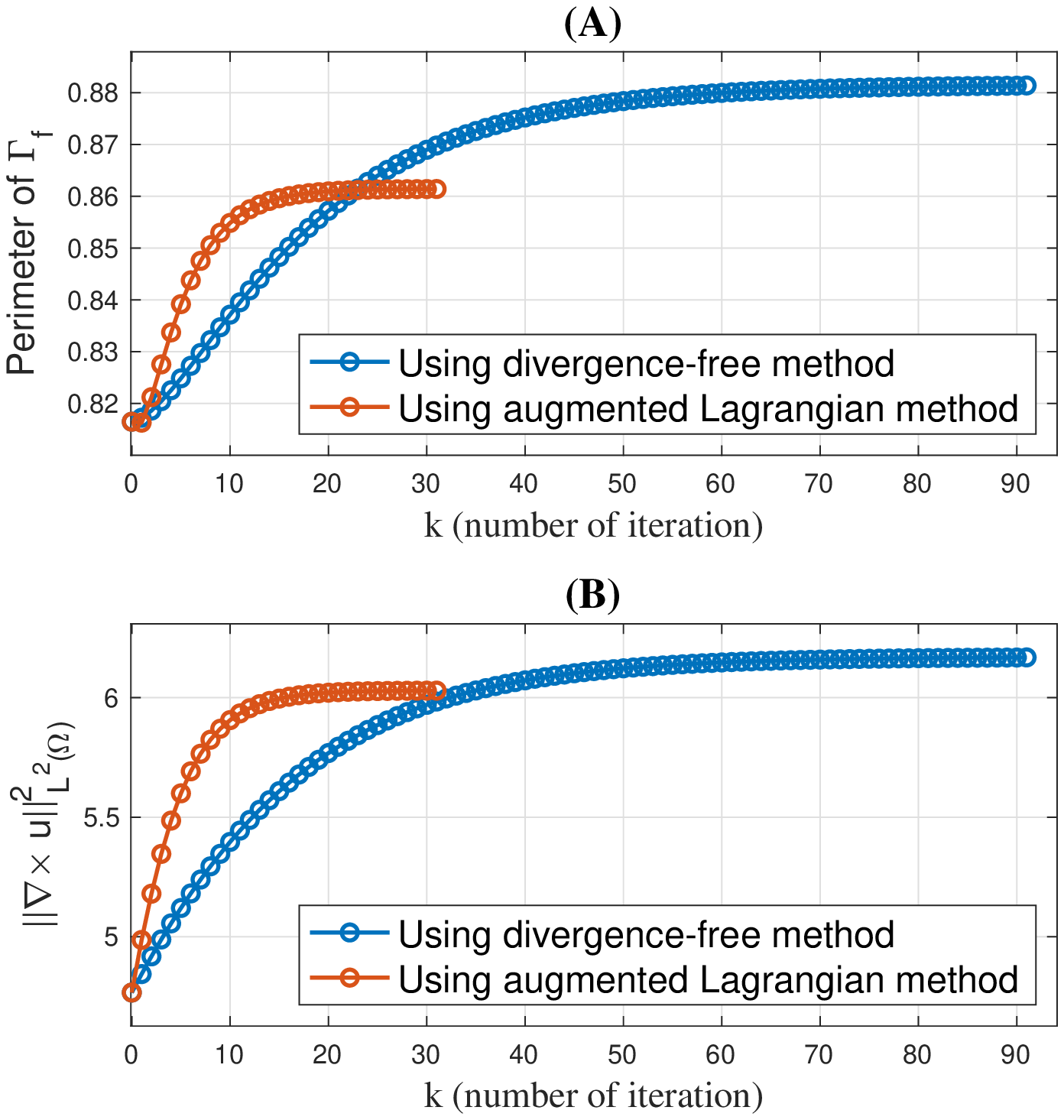}\vspace{-.1in}
 \caption{Trends of the perimeter of the surface $\Gamma_{\rm f}$ (A), and of the vorticity functional $\|\nabla\times{\bu}\|_{L^2(\Omega)}^2$ (B).}
 \label{perimeterundcurl}
 \end{figure}
	
	{As can be observed in Figure \ref{perimeterundcurl}, the perimeter functional is increasing in each iteration, while the quantity $\|\nabla\times{\bu}\|_{L^2(\Omega)}^2$ is also increasing.  This verifies and satisfies our goal of maximizing the vorticity of the fluid. Nevertheless, it would be foolish to assume that it is okay to neglect this part of the objective functional, or even remove it in the definition of the objective functional. As a matter of fact, we recall that this portion of the functional helped us in regularizing the domain variation, and the negligence on this part of the objective functional may cause topological changes in the domain, i.e., additional holes may emerge due to lack of the regularization in the perimeter. Furthermore, the choice of $\alpha>0$ in the regularization helps to make sure that $\mathcal{G}(\Omega_k) \ge 0$ which implies that our choice of step size given in \eqref{timestep} is always nonnegative.}
	
	{To visually observe the effect of the shape solutions on the fluid vortex, we simulate a dynamic version\footnote{Here we used $ \partial_t{\bu}- \nu\Delta{\bu} +({\bu}\cdot\nabla){\bu} +  \nabla p = {\blf}$ instead of the first equation in \eqref{strong_nontranslated} with same boundary conditions and divergence-free assumption, and with the initial data ${\bu}(0) = (0,0)$ in $\Omega$.}{}$^{,}${}\footnote{We note that generation of vortices using the stationary Navier--Stokes equations is almost negligible, hence we use a time-dependent system for observation which was solved using a Lagrange--Galerkin scheme (see \cite{suli1988} for such scheme, and \cite{notsu2016} for a stabilized version).} of the Navier--Stokes equations and observe the length and width of the twin-vortex right before Karman vortex shedding. }
	
	\begin{figure}[h!]
 \centering
  \includegraphics[width=.8\textwidth]{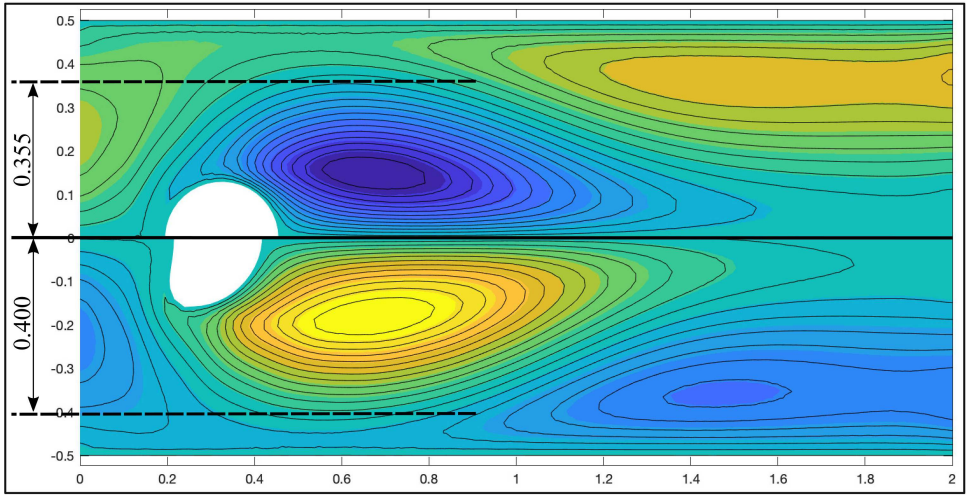}\vspace{-.1in}
 \caption{The figure shows the generation of twin-vortex before the shedding of Karman vortex. The upper part of the figure corresponds to the upper half of the twin-vortex using the initial shape, while the lower part is done using the final shape from the divergence-free method.}
 \label{twinvortex}
 \end{figure}

	{Figure \ref{twinvortex} shows the comparison of the twin-vortex induced by the initial geometry and the final shape using the divergence-free method. Note that the simulations were done using similar contour levels for the comparison to be reliable. We observe that the length and the width of the vortex is improved when the final shape from the optimization problem is used. The improved length and width of the vortex using the final shape from the augmented Lagrangian method behaves similarly as when the divergence-free method is used, hence we skip such illustration.}

\subsubsection{Convergence with respect to domain triangulations}	

For our last set of illustrations, we shall show the convergence of the approximate shape solutions to a manufactured {\it exact} solution according to the domain triangulation. The manufactured exact solution is obtained by using higher order polynomial basis functions and finer domain triangulation. Here, we chose (P4-P3) Taylor-Hood finite elements for the state, adjoint, and the divergence-free deformation fields, while we utilized P3 finite elements for the mean curvature and the augmented Lagrangian deformation fields, we also considered the mesh size $h = 1/160$.

By denoting $\Gamma_{{\rm f},e}$ and $\Gamma_{{\rm f},h}$ the respective free-boundaries of the exact and the approximate domain solutions, we shall first show the convergence in terms of the Hausdorff measure\footnote{Given two sets $A,B\subset \mathbb{R}^2$, the Hausdorff measure $d_H(A,B)$ is defined as $$\displaystyle d_H(A,B) = \max\left\{\sup_{a\in A}{ \inf_{b\in B}|a-b|}, \sup_{b\in B} {\inf_{a\in A}|a-b|} \right\}$$} between $\Gamma_{{\rm f},e}$ and $\Gamma_{{\rm f},h}$. We consider the mesh sizes $h=1/10,1/20,\ldots,1/100$ and utilize the same finite elements considered in the previous illustrations.

Figure \ref{conv:divfree} shows the results using the divergence-free method, which we implemented with the different values of mesh size as discussed above. In Figure \ref{conv:divfree} (A) we see the comparison of the final shapes $\Gamma_{{\rm f},h}$, and the manufactured final shape of the boundary $\Gamma_{{\rm f},e}$.  It can be observed that indeed the shape generated with mesh size $h=1/10$ is much coarse as compared to the other cases, and becomes smoother as $h$ becomes smaller. Furthermore, the approximated boundaries get closer to the exact boundary as $h\to0$.  It can be observed as well that the approximations exhibit a sharp edge on the bottom of the boundary while the exact solution has a smooth boundary all through out. 

\begin{figure}[h!]
 \centering
  \includegraphics[width=.8\textwidth]{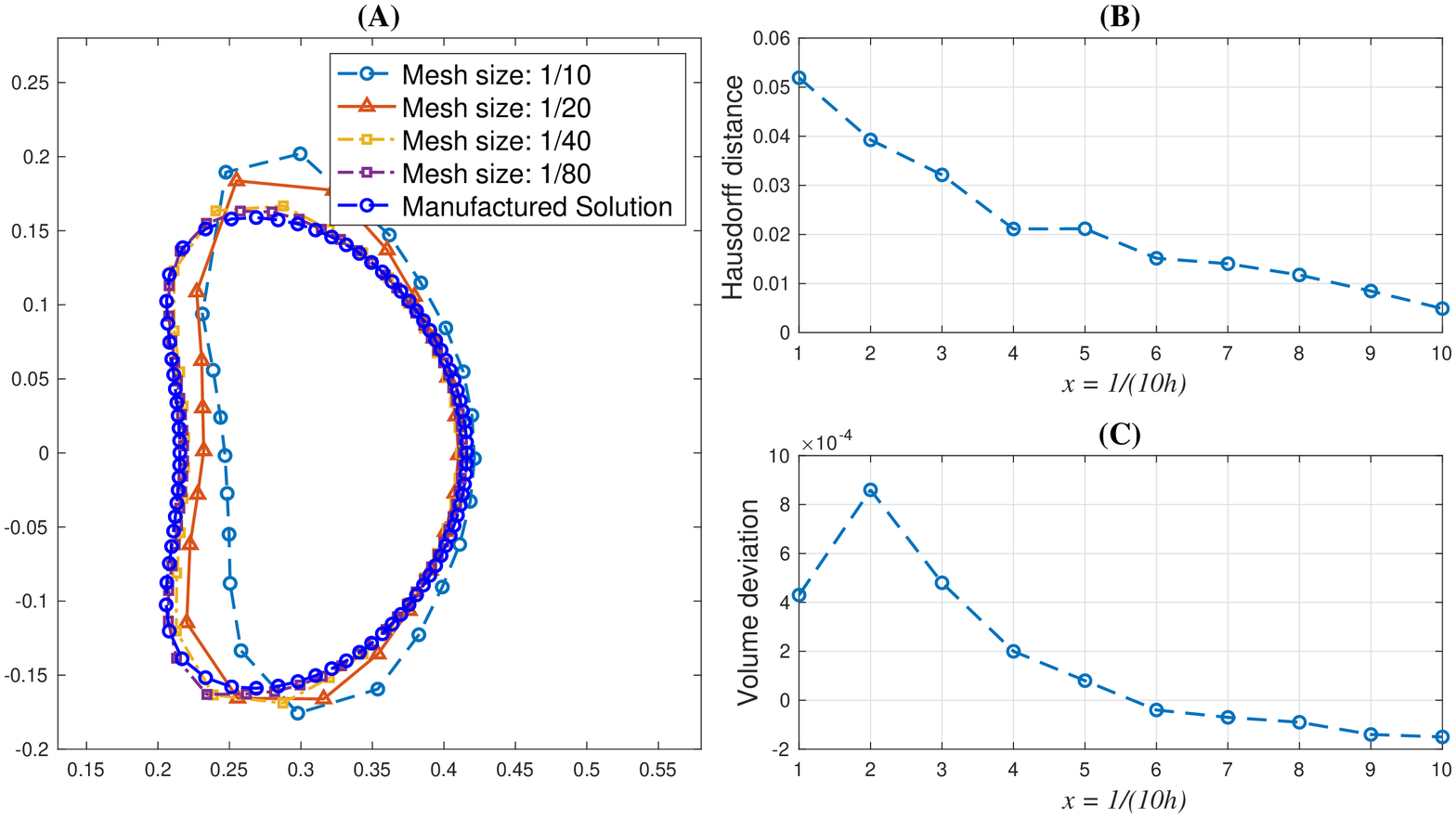}\vspace{-.1in}
 \caption{Using divergence-free method, the figure shows plot the final shapes of the boundary $\Gamma_{\rm f}$ with different mesh sizes(A); plots of $d_H(\Gamma_{{\rm f},e},\Gamma_{{\rm f},h})$ (B) and volume variation (C) versus $x=1/(10h)$, where $h$ is the mesh size}
 \label{conv:divfree}
\end{figure}

Aside from the qualitative observation of the domain convergence, Figure \ref{conv:divfree} (B) shows how the Hausdorff measure between the approximate and exact boundaries becomes smaller as we decrease the mesh size.  As for the preservation of the volume,  we can see that the divergence-free method preserves the domain volume efficiently. In fact, we can see from Figure \ref{conv:divfree} (C) that the domain variations do not exceed $9\times 10^{-4}$. 

As for the augmented Lagrangian method,  Figure \ref{conv:auglang} shows the implementation with different values of $h$ as discussed before, but with a fixed Lagrange multiplier $\ell_0=54$. 

\begin{figure}[h!]
 \centering
  \includegraphics[width=.8\textwidth]{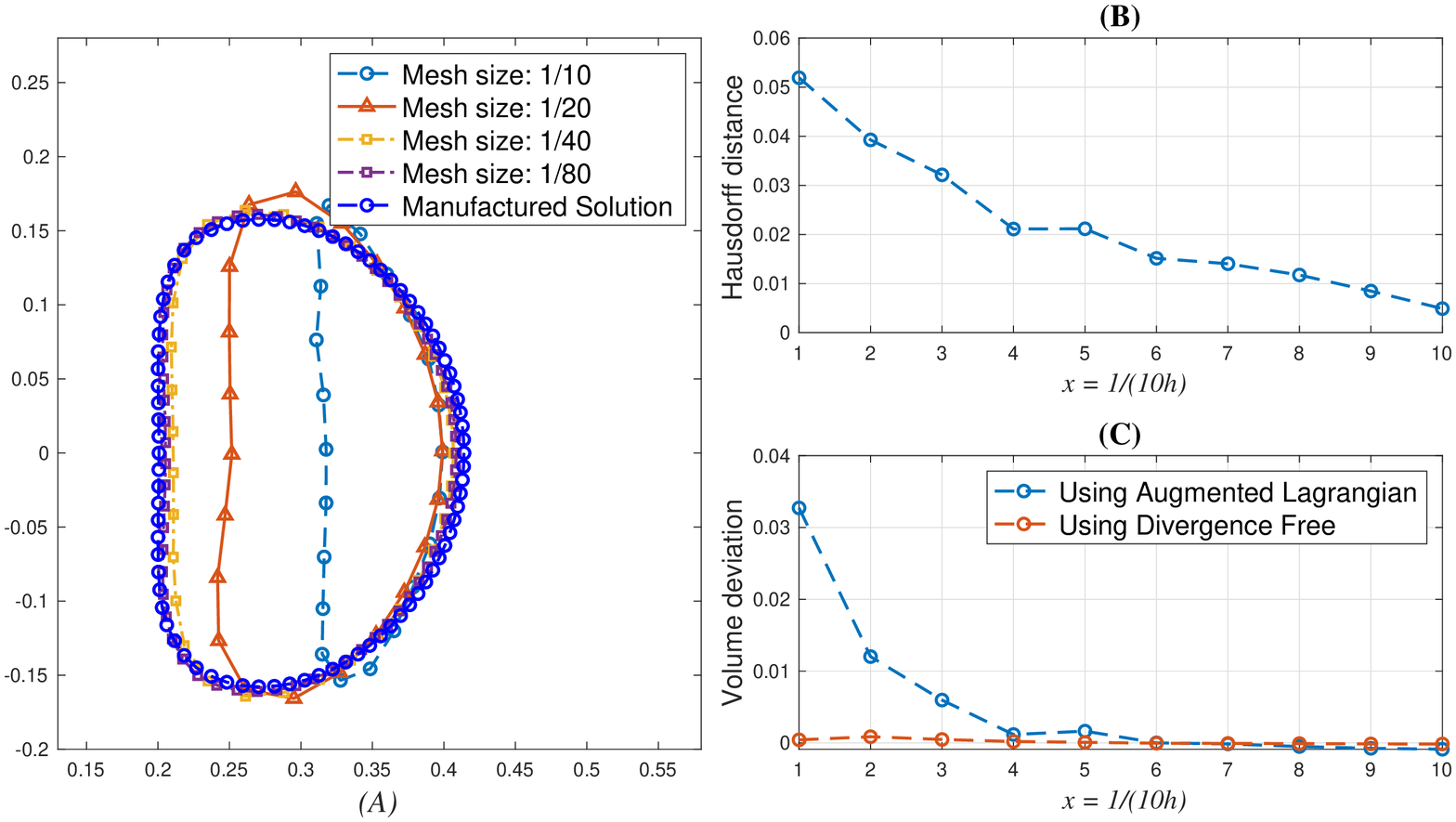}\vspace{-.1in}
 \caption{Using augmented Lagrangian method, the figure shows plot the final shapes of the boundary $\Gamma_{\rm f}$ with different mesh sizes(A); plots of $d_H(\Gamma_{{\rm f},e},\Gamma_{{\rm f},h})$ (B) and volume variation (C) versus $x=1/(10h)$, where $h$ is the mesh size}
 \label{conv:auglang}
\end{figure}

As we can observe, the approximate solutions converge to the manufactured exact solution. Starting with a coarse boundary, the solutions smoothen while converging to the said exact shape (see Figure \ref{conv:auglang}(A)).  

If we compare it with the approximate solutions generated by the divergence-free method, we can see that the solutions to the divergence-free method can be imagined as results of applying pressure to an inflated balloon. Meanwhile, the approximations using the augmented Lagrangian method are obtained by slicing a {\it bread} into portions.  This situation is also reflected on the volume preserving capacity of the two methods,  as slicing does not conserve the volume of the {\it bread} while squeezing/applying pressure retains the volume of the fluid/air in a balloon.  
This observation can also be witnessed in Figure \ref{conv:auglang}(C), where the volume preservation for the augmented Lagrangian method is much less efficient for higher values of $h$, while the volume for the shapes generated by the divergence-free method seems to be consistently close to the initial volume regardless of the mesh size.

As for the quantitative convergence of the shapes -- by the virtue of Hausdorff measure  -- we can see in Figure \ref{conv:auglang} (B) that the distance between the exact and approximate solutions tends to go smaller as the mesh size approaches zero, hence the convergence witnessed in Figure \ref{conv:auglang} (A) is quantitatively verified.

{For our last set of illustrations, we shall show the experimental order of convergence of the solutions of the state and adjoint equations, and of the deformation fields, all of which are evaluated at the final shapes.}

{For $k=0,1,2,3$ we shall consider the following mesh sizes $h_k = 10^{-1}\times 2^{-k}$. Furthermore, we shall respectively denote by ${\bu}_k$, ${\bv}_k$, and ${\bth}_k$ the solutions to the state and adjoint equations, and the deformation fields at the approximate shape solution $\Omega_k$ when the triangulation has a mesh size $h_k$. Meanwhile, we shall denote by ${\bu}^*$, ${\bv}^*$, and ${\bth}^*$ the counterparts of the quantities previously mentioned but solved on the {\it exact} solution $\Omega^*$. }

{For an approximate state ${\bphi}_k$ and an exact state ${\bphi}^*$, the experimental order of convergence is solved by computing the following quantity
\[ eoc_k = \log_2\left(\frac{\|{\bphi}_{k-1} - {\bphi}^* \|_{H^m(D_{k-1})}}{\|{\bphi}_k - {\bphi}^* \|_{H^m(D_k)}} \right),\] 
where $D_k$ is the discretization of the hold-all domain $D$ with mesh size $h_k$ and contains the nodes of the approximate solution $\Omega_k$. We note that it is imperative to measure the difference ${\bphi}_{k} - {\bphi}^*$ in $D_k$ to make sure that the approximation ${\bphi}_{k}$ is well-defined, while the exact solution is interpolated according to the nodes of the discretized hold-all domain.  We also mention that the impetus for solving the {\it eoc} for the deformation fields ${\bth}_k$, is that these fields can be looked at as the controls for an optimal control problem. In this way, we verify that the error estimates for our type of controls does not agree with the usual error estimates for optimal control problems\footnote{For example, in \cite{troltzsch2010} (which can be made as reference for most optimal control problems with second order elliptic states), the error estimates for the control are proportional to the error estimates for the state and adjoint variables.}, which is due to the evolving domain. }

{\begin{table}[h!]
\caption{Experimental order of convergence using divergence-free method.}
\centering
\begin{tabular}{|c| c  c c c|}
\hline
&& $eoc_k: \|{\bu}_k - {\bu}^* \|_{H^m(D_k)}$ &&		\\[0.5ex]
\hline
 & $k=0$& $k=1$& $k=2$& $k=3$\\
 \hline
 $m = 0$ & -- & 0.9513 &	1.552 &	1.112\\
 $m = 1$& --  & 0.2880	&	0.5129	&	0.6507\\
\hline
&& $eoc_k: \|{\bv}_k - {\bv}^* \|_{H^m(D_k)}$ &&		\\[0.5ex]
\hline
 & $k=0$& $k=1$& $k=2$& $k=3$\\
 \hline
 $m = 0$ & -- & 1.102	&	1.531	&	1.006\\
 $m = 1$& --  & 0.2640	&	0.9270	&	0.2554\\
 \hline
&& $eoc_k: \|{\bth}_k - {\bth}^* \|_{H^m(D_k)}$ &&		\\[0.5ex]
\hline
 & $k=0$& $k=1$& $k=2$& $k=3$\\
 \hline
 $m = 0$ & -- & 0.4747	&	1.392	&	0.8346\\
 $m = 1$& --  & 0.2296	&	0.8490	&	0.4484\\
 \hline
\end{tabular}
\label{eoc:divfree}
\end{table}}

{From Table \ref{eoc:divfree}, we see that using the divergence free method, the error reduction for the state, adjoint and the deformation fields using the $L^2$-norm is at most of order $3/2$, while for the $H^1$-norm is at most linear.  Unlike the error estimates for elliptic problems (or even the Stokes/Navier--Stokes equations) with fixed domain where the convergence rate can be easily obtained as quadratic and linear for the $L^2$ and $H^1$ norms, respectively.  }

{\begin{table}[h!]
\caption{Experimental order of convergence using the augmented Lagrangian method.}
\centering
\begin{tabular}{|c| c  c c c|}
\hline
&& $eoc_k: \|{\bu}_k - {\bu}^* \|_{H^m(D_k)}$ &&		\\[0.5ex]
\hline
 & $k=0$& $k=1$& $k=2$& $k=3$\\
 \hline
 $m = 0$ & -- & 1.148	&	1.917	&	 1.350\\
 $m = 1$& --  & 0.2036	&	0.7188	&	1.385\\
\hline
&& $eoc_k: \|{\bv}_k - {\bv}^* \|_{H^m(D_k)}$ &&		\\[0.5ex]
\hline
 & $k=0$& $k=1$& $k=2$& $k=3$\\
 \hline
 $m = 0$ & -- & 1.100	&	1.889	&	1.290\\
 $m = 1$& --  & 0.4196	&	0.5684	&	1.433\\
 \hline
&& $eoc_k: \|{\bth}_k - {\bth}^* \|_{H^m(D_k)}$ &&		\\[0.5ex]
\hline
 & $k=0$& $k=1$& $k=2$& $k=3$\\
 \hline
 $m = 0$ & -- & 1.022	&	1.542	&	1.928\\
 $m = 1$& --  & 0.3090	&	0.7255	&	1.116\\
 \hline
\end{tabular}
\label{eoc:auglang}
\end{table}}

{Using the augmented Lagrangian method on the other hand yields an almost quadratic and linear error reduction for the deformation fields on the $L^2$ and $H^1$ norms, respectively.  This convergence may be attributed to the {\it slicing} phenomena for approximating the free-boundary that we talked about earlier. However, the irregularity and slow convergence that was apparent in the divergence-free method may still be witnessed in the convergence rates for the state and adjoint solutions.}

{With these observations, we conclude this section by giving some possible reason for the {\it anomaly} with the above convergence rates. We note that the approximate shape solution $\Omega_k$ is not necessarily -- or even close to -- the discretization of the exact shape $\Omega^*$\footnote{Meaning, $\Omega_k$ is not the union of a triangulation of $\Omega^*$.}.  If we look at Figure \ref{conv:divfree}(A) for instance, we can see that the approximate boundary $\Gamma_{{\rm f},h}$ when $h=1/10$ is very much distinct from the free-boundary $\Gamma_{{\rm f},e}$ of the exact shape solution.  For this reason, the usual error estimates with respect to domain discretizations may not hold.  In fact, $\Omega_k$ is the solution to the fully discretized problem
\begin{align}
	\min_{\Omega_k}\mathcal{G}_k(\Omega_k) \text{ subject to system }\eqref{newtonaprrox}
\end{align}
where all the integrals including that of the objective functional are evaluated using Gaussian quadrature.  Now, if the exact shape $\Omega^*$ is discretized with same mesh size $h = 10^{-1}\times2^{-k}$, which we denote by $\Omega^*_k$, and solve the Navier--Stokes equations numerically and denote its solution as ${\bu}_k^*$, then we can achieve $\|{\bu}^* - {\bu}_k^\star \|_{H^m(\Omega^*)} = \mathcal{O}(h^{2-m})$\footnote{If we denote by ${\bth}^\star_k$ the deformation field solved in $\Omega_k^*$ one can show that $\|{\bth}^* - {\bth}_k^\star \|_{H^m(\Omega_k)}$ enjoys the same convergence rate (see for example \cite{murai2013}).} as an order of convergence. Furthermore,  we infer from using triangle inequality that
\begin{align} 
\|{\bu}^* - {\bu}_k \|_{H^m(D)} \le \|{\bu}^* - {\bu}^*_k \|_{H^m(D)} + \|{\bu}_k^* - {\bu}_k \|_{H^m(D)}.  
\label{femdiss}
\end{align}
Since the first term on the right hand side of \eqref{femdiss} can be estimated by $\mathcal{O}(h^{2-m})$, we conjecture that the second term is what impedes us to fully realize the rate of convergence that is enjoyed by solutions for usual optimal control problems. }


\section{Conclusion}\label{sect6}

To summarize, we started the exposition by introducing an artificial boundary condition to make sure of the existence of the solutions to the Navier--Stokes equations. We later on showed the existence of states and shape solutions. The improved regularity of the state solution based on an additional regularity assumption on the domain was also briefly  discussed.  {During the course of the discussions,  we also unraveled some advantages of the outflow boundary condition we considered as opposed to the artificial conditions considered in the literature. Such properties are summarized in the Table \ref{table:bdyconds}.}
\begin{table}[h!]
\caption{Comparison of the artificial outflow conditions.}
\centering
\begin{tabular}{|c | c | c | c | c |}
\hline 
B.C. & Existence\footnote{Existence of Solutions to the Navier--Stokes equations} & Uniqueness\footnote{Uniqueness of Solution to the Navier--Stokes equations} &  Adjoint problem (AP) & Numerical treatment of the (AP)\\
 \hline
 \eqref{donothing} & No assurance & Vacuously Holds & Possible\footnote{See \cite{kasumba2012} for such formulation. Note also that theoretical existence of solution is not guaranteed.} & Possible\\
 \hline
 \eqref{conboundcond} & Theorem \ref{th:wp} & Under condition \eqref{est:uniqueness} & See \eqref{adjoint} and \eqref{strong_adjoint} & Easily Managed\\
  \hline
 \eqref{dirdonothing} & See \cite{braack2014} and \cite{bruneau1996}& See \cite{braack2014} and \cite{bruneau1996} & Quite challenging\footnote{see Remark \ref{remark:endof3}} & Quite challenging\footnote{see Remark \ref{remark:endof3}}\\
 \hline
\end{tabular}
\label{table:bdyconds}
\end{table}

We proceeded by formulating the Eulerian derivative of the objective functional.  In the shape derivative, the volume constraint seems to have been neglected which compelled us to adapt an augmented Lagrangian method and to use a class of deformation fields that satisfy the divergence-free property. The final shapes yielded by the two methods were then compared. We observed that the volume constraint is more satisfied when the latter method is utilized. However, we also saw that this method needed much more iterations as compared to the former. 

We ended by showing the convergence of numerical solutions to a manufactured solution based on the domain discretization. The convergence was measured in terms of the Hausdorff distance, and of the $H^1$ and $L^2$ norms of the state and adjoint solutions, and of the deformation fields. We found out that the experimental convergence rates do not reflect the same convergence rates of typical finite element error estimates for elliptic problems defined in fixed domains. With this in mind, we close this exposition by recommending to future authors the formulation of the theoretical error estimates.  In particular, and if possible, we hope to see the development of estimating the gap between the states ${\bu}_k^\star$ and ${\bu}^*$.

	\appendix
	\section{}
	This section is dedicated to expose the proof some of the properties that were briefly mentioned that we deemed to be too important to just ignore.
	
	We start with the proof of Lemma \ref{midg} which will be used -- aside from the existence of solutions --  in the subsequent propositions.
	
	\noindent{\it Proof of Lemma \ref{midg}.}
	The proof for the existence of the element ${\bw}_0\in H^2(\Omega)^2$ can be established using the same arguments as in \cite[Lemma IV.2.3]{girault1986}.  
	In particular, we note that the assumption that $({\bg},{\bn})_{\partial\Omega} = 0$, implies that there exists ${\bu}_0 \in H^1(\Omega)^2$ such that $\dive{\bu}_0 = 0$ in $\Omega$, and ${\bu}_0|_{\partial\Omega} = {\bg}$. From \cite[Theorem I.3.1]{girault1986}, we see that ${\bu}_0 = \nabla\times\phi$ for some stream function $\phi\in H^2(\Omega)^2$.  Furthermore,  we can choose a particular stream function $\phi$ that is zero on the boundary $\Gamma_{\rm w}$. We shall then define ${\bw}_{0,\varepsilon}\in H^1(\Omega)^2$ as ${\bw}_{0,\varepsilon} = \nabla\times(\theta_\varepsilon \phi)$, where $\theta_\varepsilon\in C^2(\overline{\Omega})$ -- thanks to \cite[Lemma IV.2.4]{girault1986} -- is the function that satisfies 
	\begin{align*}
		\left\{
			\begin{aligned}
				&\theta_\varepsilon = 1 &&\text{in a neighborhood of }\Gamma_0,\\
				&\theta_\varepsilon(x) = 0 &&\text{for }d(x,\Gamma_0) \ge 2e^{-1/\varepsilon},\\
				&|\partial\theta_\varepsilon/\partial x_i| \le \varepsilon/d(x,\Gamma_0) &&\text{for }d(x,\Gamma_0) \le 2e^{-1/\varepsilon}.
			\end{aligned}
		\right.
	\end{align*}
	
	Futhermore, using using the same arguments as in \cite[Lemma IV.2.3]{girault1986} one can show that 
	\begin{align}
	 \|v_iw_j\|_{L^2(\Omega)}\le c_\varepsilon|v_i|_{H^1(\Omega)}
	 \label{l2est}
	 \end{align}
	where $v_i$ and $w_j$, for $i,j=1,2$, are such that ${\bv}=(v_1,v_2)\in H_{\Gamma_0}^1(\Omega)^2$ and ${\bw}_{0,\varepsilon}=(w_1,w_2)$, and the constant $c_\varepsilon>0$ is dependent on $\varepsilon>0$ in such a way that $c_\varepsilon \to 0$ as $\varepsilon\to0$.
	Now,  from Lemma \ref{propb}(2) we get 
	\begin{align*}
		|({\bv}\cdot\nabla{\bw}_{0,\varepsilon},{\bv})_{\Omega} -  \frac{1}{2}({\bv}\cdot{\bn}, &{\bv}\cdot{\bw}_{0,\varepsilon})_{\Gamma_{\rm out}}|  = |({\bv}\cdot\nabla{\bv},{\bw}_{0,\varepsilon})_{\Omega}- \frac{1}{2}({\bv}\cdot{\bn}, {\bv}\cdot{\bw}_{0,\varepsilon})_{\Gamma_{\rm out}}|\\
		&\le \left| \sum_{i,j=1}^2 \int_{\Omega} v_iw_j\frac{\partial v_j}{\partial x_i} \du x\right| +  \frac{1}{2}\left| \int_{\Gamma_{\rm out}} ({\bv}\cdot{\bn})({\bv}\cdot{\bw}_{0,\varepsilon}) \du s \right|.
	\end{align*}
	
	Using \eqref{l2est}, the first expression on the last line of the previous computation can be estimated as follows:
	\begin{align}
		\left| \sum_{i,j=1}^2 \int_{\Omega} v_iw_j\frac{\partial v_j}{\partial x_i} \du x\right| & \le c_{1,\varepsilon}\|\bv\|_{\bV}^2.\label{est:trili}
	\end{align}
	where $ c_{1,\varepsilon} > 0$ is such that $c_{1,\varepsilon} \to 0$ as $\varepsilon\to0$.
	
	As for the boundary integral,  our goal is to show that 
	\begin{align}
	 \|{\bw}_{0,\varepsilon}\|_{L^2(\Gamma_{\rm out})} \le c_{2,\varepsilon}.\label{ineq:w0}
	 \end{align}
	 where $c_{2,\varepsilon}\to 0$ as $\varepsilon\to0$.
	By doing so, we can show -- with the help of the Rellich-Kondrachov embedding theorem -- that
	\begin{align}
		\begin{aligned}
		\left| \int_{\Gamma_{\rm out}} ({\bv}\cdot{\bn})({\bv}\cdot{\bw}_{0,\varepsilon}) \du s \right| \le&\, c\|({\bv}\cdot{\bn}){\bv}\|_{L^2(\Gamma_{\rm out})}\|{\bw}_{0,\varepsilon} \|_{L^2(\Gamma_{\rm out})}\\ 
		\le&\,  c\|{\bv}\|_{L^4(\Gamma_{\rm out})^2}^2\|{\bw}_{0,\varepsilon}\|_{L^2(\Gamma_{\rm out})^2}\\
		\le&\,  c_{3,\varepsilon}\|{\bv}\|_{{\bV}}^2.
		\end{aligned}\label{est:tribdy}
	\end{align}
	where $c_{3,\varepsilon} \to 0$ as $\varepsilon\to 0$.
	
	To establish \eqref{ineq:w0}, we shall refer to the following Hardy inequality \cite[Theorem 330]{hardy1934}:
	\begin{lmm}
		Let $p> 1$, and $\delta \in (0,\infty]$. For any $u\in W^{1,p}(0,\delta)$ such that $u(\delta) = 0$, the following inequality holds:
		\begin{align*}
			\int_0^\delta \frac{|u|^p}{t^p}\du t \le \left| \frac{p}{p-1} \right|^p\int_0^\delta {|u'|^p}\du t.
		\end{align*}
		\label{lemma:hardy}
	\end{lmm}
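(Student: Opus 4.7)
The plan is to prove this classical one-dimensional Hardy inequality by integration by parts followed by Hölder's inequality, following the standard route of Hardy, Littlewood, and Pólya \cite{hardy1934}. First I would reduce, by a density argument, to smooth test functions compactly supported in $(0,\delta)$; the passage to the general $u \in W^{1,p}(0,\delta)$ with $u(\delta)=0$ is then handled by a cutoff-and-mollification procedure, where I may assume the left-hand integral is finite (otherwise the inequality is either vacuous or forces the right-hand side to also be infinite). This finiteness, in turn, enforces the decay $|u(t)|^p t^{1-p} \to 0$ along a suitable sequence $t \to 0^+$, which will be needed to kill the boundary contribution at the origin.

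For smooth $u$ compactly supported in $(0,\delta)$, I would use $-t^{1-p}/(p-1)$ as an antiderivative of $t^{-p}$ and integrate by parts against $|u(t)|^p$, noting that the boundary terms vanish outright thanks to the compact support. This produces the identity
\[
\int_0^\delta \frac{|u(t)|^p}{t^p}\,\du t \;=\; \frac{p}{p-1}\int_0^\delta \frac{|u(t)|^{p-1}}{t^{p-1}}\,\mathrm{sgn}(u(t))\,u'(t)\,\du t.
\]
Applying Hölder's inequality with conjugate exponents $p/(p-1)$ and $p$ to the right-hand side yields
\[
\int_0^\delta \frac{|u|^p}{t^p}\,\du t \;\le\; \frac{p}{p-1}\left(\int_0^\delta \frac{|u|^p}{t^p}\,\du t\right)^{\!(p-1)/p}\!\left(\int_0^\delta |u'|^p\,\du t\right)^{\!1/p}.
\]
Dividing both sides by the factor $\big(\int_0^\delta |u|^p/t^p\,\du t\big)^{(p-1)/p}$, which I may assume strictly positive and finite, and then raising to the $p$-th power produces exactly the stated bound with the sharp constant $(p/(p-1))^p$.

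The main obstacle is the rigorous execution of the density/limiting argument, rather than the algebraic manipulations themselves. Specifically, one must justify that the boundary term at $t=0$ does not corrupt the final inequality: although the weight $t^{1-p}$ blows up as $t\to 0^+$, the assumed finiteness of $\int_0^\delta |u|^p/t^p\,\du t$ guarantees that $|u(t)|^p t^{1-p}$ decays along a subsequence, allowing one to discard the singular boundary contribution by selecting an appropriate sequence of cutoff functions $\phi_\varepsilon$ supported in $(\varepsilon,\delta)$, applying the smooth-case inequality to $u\phi_\varepsilon$, and passing to the limit $\varepsilon\to 0^+$. All remaining steps are a careful bookkeeping of the cutoff and mollification errors using monotone convergence on the weighted integrals.
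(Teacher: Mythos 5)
Your integration-by-parts-plus-H\"older computation is the classical argument and the algebra is fine; note, for comparison, that the paper does not prove this lemma at all but simply cites it as Theorem 330 of Hardy--Littlewood--P\'olya, so you are supplying the standard proof of the cited result rather than diverging from the paper. The genuine gap is in your reduction step, and it interacts with a defect of the statement itself. You dismiss the case $\int_0^\delta |u|^p t^{-p}\,\du t=\infty$ as ``vacuous or forcing the right-hand side to be infinite''; neither is true under the stated hypothesis. With only $u(\delta)=0$ the claimed inequality is in fact false: take $u(t)=1-t/\delta$ with $\delta<\infty$, which lies in $W^{1,p}(0,\delta)$ and vanishes at $\delta$, yet the left-hand side diverges (the integrand behaves like $t^{-p}$ near the origin and $p>1$) while the right-hand side equals $(p/(p-1))^p\,\delta^{1-p}<\infty$. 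So assuming the left-hand side finite is not a harmless normalization but an additional hypothesis, and no cutoff or subsequence argument can repair the proof of the statement as literally written.

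What the lemma must require---and what is actually used in the appendix---is $u(0)=0$: there the inequality is applied to the stream function $\phi$ parametrized by (arc-length) distance $t$ to $\Gamma_0$, so $\phi$ vanishes at $t=0$, not at $t=\delta(\varepsilon)$, and Theorem 330 of \cite{hardy1934} likewise concerns $F(x)=\int_0^x f$. With $u(0)=0$ your argument closes and even simplifies: integrate by parts on $(a,\delta)$ to get the boundary terms $\frac{a^{1-p}}{p-1}|u(a)|^p-\frac{\delta^{1-p}}{p-1}|u(\delta)|^p$; the second is nonpositive and may be discarded with no condition at $\delta$ (this also removes the need for your cutoff near $\delta$, which would not converge in $W^{1,p}$ when $u(\delta)\neq0$), while the first tends to zero because $|u(a)|\le \|u'\|_{L^p(0,a)}\,a^{1-1/p}$, hence $a^{1-p}|u(a)|^p\le \|u'\|_{L^p(0,a)}^p\to0$---no assumed finiteness or subsequence extraction is needed. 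H\"older on $(a,\delta)$, the division you describe applied to the (finite) truncated integral, and monotone convergence as $a\to0$ then yield the bound with constant $(p/(p-1))^p$, the finiteness of the full left-hand side coming out as a conclusion rather than an assumption. You should flag the endpoint hypothesis explicitly, since as stated the lemma and its proof cannot both survive.
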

	
	Indeed, from the properties of $\theta_\varepsilon$, by denoting $\Gamma_{\rm out}^{\varepsilon} := \{s\in\Gamma_{\rm out}: d(s,\Gamma_0) \le 2e^{-1/\varepsilon} \}$, we get
	\begin{align}
		\begin{aligned}
		\|{\bw}_{0,\varepsilon}\|_{L^2(\Gamma_{\rm out})^2}  & = \left( \int_{\Gamma_{\rm out}^\varepsilon} |{\bw}_{0,\varepsilon}|^2 \du s \right)^{1/2}\\
			& \le c \left( \int_{\Gamma_{\rm out}^\varepsilon}  \varepsilon^2\frac{|\phi|^2}{d(s,\Gamma_0)^2} + \left|\nabla\phi\right|^2 \du s \right)^{1/2}\\
			& \le c_1\varepsilon\left( \int_{\Gamma_{\rm out}^\varepsilon}  \frac{|\phi|^2}{d(s,\Gamma_0)^2} \du s \right)^{1/2} + c_2\|\nabla\phi\|_{L^2(\Gamma_{\rm out}^\varepsilon)^2}
		\end{aligned}	\label{est:w0}
	\end{align}
	
	Due to the regularity assumption on the domain $\Omega$ and since $\phi = 0$ on $\Gamma_{\rm w}$ which is adjacent to the boundary $\Gamma_{\rm out}^{\varepsilon}$,  we can write the boundary integral as the one-dimensional integral
	\[ \int_0^{\delta(\varepsilon)} \left|\frac{\phi(t)}{t}\right|^2 \du t, \]
	where $\delta(\varepsilon)$ corresponds to the arc length of the boundary $\Gamma_{\rm out}^{\varepsilon}$. From Lemma \ref{lemma:hardy}, we get 
	\[ \int_0^{\delta(\varepsilon)} \left|\frac{\phi(t)}{t}\right|^2 \du t \le 4  \int_0^{\delta(\varepsilon)} \left|\phi'(t)\right|^2 \du t. \]
	
	This implies that the estimate \eqref{est:w0} can further be estimated as 
	\begin{align}
		\begin{aligned}
		\|{\bw}_{0,\varepsilon}\|_{L^2(\Gamma_{\rm out})^2}  & \le (c_1\varepsilon + c_2)\|\nabla\phi\|_{L^2(\Gamma_{\rm out}^\varepsilon)^2}.
		\end{aligned}	
	\end{align}
	Lastly, since $\|\nabla\phi\|_{L^2(\Gamma_{\rm out}^\varepsilon)^2} \to 0$ as $\varepsilon\to 0$, we can choose $ c_{2,\varepsilon} := (c_1\varepsilon + c_2)\|\nabla\phi\|_{L^2(\Gamma_{\rm out}^\varepsilon)^2}$.  

	From \eqref{est:trili} and \eqref{est:tribdy},  we get 
	\begin{align*}
		|({\bv}\cdot\nabla{\bw}_{0,\varepsilon},{\bv})_{\Omega} -  \frac{1}{2}({\bv}\cdot{\bn}, &{\bv}\cdot{\bw}_{0,\varepsilon})_{\Gamma_{\rm out}}|  \le \left(c_{1,\varepsilon} + \frac{c_{3,\varepsilon}}{2}\right)\|{\bv}\|_{{\bV}}^2.
	\end{align*}
	Since $\left(c_{1,\varepsilon} + \frac{c_{3,\varepsilon}}{2}\right)\to 0$ as $\varepsilon\to 0$, we can choose $\varepsilon>0$ small enough so that $\left(c_{1,\varepsilon} + \frac{c_{3,\varepsilon}}{2}\right)\le \gamma$, and with this choice of $\varepsilon$ we take ${\bw}_{0} = {\bw}_{0,\varepsilon}$. \qed

	
%
%

	\begin{prpstn}[Uniqueness of the Navier--Stokes solution]
	Let the assumptions in Theorem \ref{th:wp} hold, and assume that \eqref{est:uniqueness} holds.
	Then the solution $\tilde{\bu}\in{\bV}$ of \eqref{weak} is unique.
	\label{prop:unique}
\end{prpstn}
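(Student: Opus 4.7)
The plan is to follow the classical Navier--Stokes uniqueness argument (as in \cite[Theorem~IV.2.2]{girault1986} or \cite{temam1977}), adapted to the convective boundary term. Assume by contradiction that $\tilde{\bu}_1,\tilde{\bu}_2\in\bV$ both satisfy \eqref{weak}, and set $\bw:=\tilde{\bu}_1-\tilde{\bu}_2\in\bV$. Subtract the two equations and exploit the bilinear structure of the nonlinear forms via the identities
\[
b(\tilde{\bu}_1;\tilde{\bu}_1,\bphi)-b(\tilde{\bu}_2;\tilde{\bu}_2,\bphi)=b(\bw;\tilde{\bu}_1,\bphi)+b(\tilde{\bu}_2;\bw,\bphi),
\]
\[
(\tilde{\bu}_1\!\cdot\!\bn,\tilde{\bu}_1\!\cdot\!\bphi)_{\Gamma_{\rm out}}-(\tilde{\bu}_2\!\cdot\!\bn,\tilde{\bu}_2\!\cdot\!\bphi)_{\Gamma_{\rm out}}=(\bw\!\cdot\!\bn,\tilde{\bu}_1\!\cdot\!\bphi)_{\Gamma_{\rm out}}+(\tilde{\bu}_2\!\cdot\!\bn,\bw\!\cdot\!\bphi)_{\Gamma_{\rm out}}.
\]

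The key step is to test the resulting equation with $\bphi=\bw$. By Lemma~\ref{propb}(3) applied to $b(\tilde{\bu}_2;\bw,\bw)$ and $b(\bg;\bw,\bw)$, the quantities $b(\tilde{\bu}_2;\bw,\bw)-\tfrac{1}{2}(\tilde{\bu}_2\!\cdot\!\bn,|\bw|^2)_{\Gamma_{\rm out}}$ and $b(\bg;\bw,\bw)-\tfrac{1}{2}(\bg\!\cdot\!\bn,|\bw|^2)_{\Gamma_{\rm out}}$ both vanish. What survives is
\[
\nu\|\bw\|_{\bV}^{2} + \Bigl[b(\bw;\tilde{\bu}_1,\bw)-\tfrac{1}{2}(\bw\!\cdot\!\bn,\tilde{\bu}_1\!\cdot\!\bw)_{\Gamma_{\rm out}}\Bigr] + \Bigl[b(\bw;\bg,\bw)-\tfrac{1}{2}(\bw\!\cdot\!\bn,\bg\!\cdot\!\bw)_{\Gamma_{\rm out}}\Bigr] =0.
\]
Here I use the crucial observation that the $-\tfrac{1}{2}\mathbb{C}$ part of the weak formulation was precisely engineered to neutralize the $b(\,\cdot\,;\bw,\bw)$ pieces, which is the whole point of imposing \eqref{conboundcond}.

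Next, I would estimate the two bracketed quantities. The first is bounded by $N\|\tilde{\bu}_1\|_{\bV}\|\bw\|_{\bV}^{2}$, where $N$ denotes the supremum appearing in \eqref{est:uniqueness}. For the second bracket, which involves the extension $\bg\notin\bV$, I would invoke the refined estimate of Lemma~\ref{midg} (cf.\ Remark immediately after it) with $\gamma$ chosen small (say $\gamma=\nu/2$), yielding a bound of the form $\gamma\|\bw\|_{\bV}^{2}$. Combining these and absorbing the $\bg$-term into the left-hand side gives
\[
\Bigl(\tfrac{\nu}{2}-N\|\tilde{\bu}_1\|_{\bV}\Bigr)\|\bw\|_{\bV}^{2}\le 0.
\]

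The final step is to invoke the a priori energy estimate from Theorem~\ref{th:wp} together with the coercivity constant $\nu/2$, which produces $\|\tilde{\bu}_1\|_{\bV}\le (2/\nu)\|\Phi\|_{\bV^{*}}$. Substituting this in and combining with assumption \eqref{est:uniqueness} forces the coefficient of $\|\bw\|_{\bV}^{2}$ to be strictly positive, whence $\bw\equiv 0$ and $\tilde{\bu}_1=\tilde{\bu}_2$. The main obstacle is the careful bookkeeping in step three: the extension $\bg$ is not in $\bV$, so the $\bg$-cross term cannot be estimated by $N$ directly and has to be handled separately via Lemma~\ref{midg}, with the smallness parameter $\gamma$ tuned so that the absorption into the coercive $\nu\|\bw\|_{\bV}^{2}$ leaves enough room to conclude from \eqref{est:uniqueness}.
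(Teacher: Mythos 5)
Your proposal is correct and follows essentially the same route as the paper's own proof: subtract the two weak formulations, test with $\bw=\tilde{\bu}_1-\tilde{\bu}_2$, cancel the $b(\tilde{\bu}_2;\bw,\bw)$ and $b(\bg;\bw,\bw)$ contributions against the $\mathbb{C}$-terms via Lemma~\ref{propb}(3), bound the remaining $\tilde{\bu}_1$ cross-term by $\mathcal{N}\|\tilde{\bu}_1\|_{\bV}\|\bw\|_{\bV}^2$ and the $\bg$ cross-term by Lemma~\ref{midg} with $\gamma=\nu/2$, and conclude using the energy bound $\|\tilde{\bu}_1\|_{\bV}\le(2/\nu)\|\Phi\|_{\bV^*}$ together with \eqref{est:uniqueness}. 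The only caveat (shared with the paper's own computation, so not a gap on your part) is that the final coefficient comes out as $\nu^2-4\mathcal{N}\|\Phi\|_{\bV^*}$, so the constants in \eqref{est:uniqueness} deserve a second look.
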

\begin{proof}
Suppose that we have two solutions $\tilde{\bu}_1,\tilde{\bu}_2\in{\bV}$ for \eqref{weak}. By definition,  for any ${\bphi}\in{\bV}$ and $i=1,2$, we have
\begin{align}
	\mathbb{A}(\tilde{\bu}_i;\tilde{\bu}_i,{\bphi}) - \frac{1}{2}\mathbb{C}(\tilde{\bu}_i;\tilde{\bu}_i,{\bphi}) = \langle \Phi, {\bphi}\rangle_{\bV'\times\bV}.
	\label{wNSi}
\end{align}
Furthermore,  by following the proof of Theorem \ref{th:wp} we have, for any $i=1,2$, the estimate
\begin{align}
	\|\tilde{\bu}_i\| \le \frac{2}{\nu}\|\Phi\|_{V'}.
	\label{solnorm}
\end{align}
By taking the difference of \eqref{wNSi}, and by utilizing Lemma \ref{propb}(iii), the element ${\bw}=\tilde{\bu}_1-\tilde{\bu}_2\in{\bV}$ satisfies 
\begin{align}
\begin{aligned}
	\nu\|{\bw}\|_{\bV}^2 + ({\bw}\cdot\nabla{\bg},{\bw})_\Omega - \frac{1}{2}({\bw}\cdot{\bn},{\bg}\cdot{\bw})_{\Gamma_2} =\frac{1}{2}({\bw}\cdot{\bn},\tilde{\bu}_1\cdot{\bw})_{\Gamma_2}-({\bw}\cdot\nabla\tilde{\bu}_1,{\bw})_\Omega.
	\end{aligned}
\label{eqnforw}
\end{align}
Furthermore, by introducing the notation 
\[ \mathcal{N} = \sup_{\bu,\bv,\bw}\frac{|b(\bu;\bv,\bw)_{\Omega}-\frac{1}{2}({\bu}\cdot{\bn},{\bv}\cdot{\bw})_{\Gamma_{\rm out}} |}{\|\bu\|_{\bV(\Omega)}\|\bv\|_{\bV(\Omega)}\|\bw\|_{\bV(\Omega)}}, \]
the right-hand side of \eqref{eqnforw} can be further be bounded using the estimate \eqref{solnorm}. Indeed,  we have 
\begin{align}
\begin{aligned}
\frac{1}{2}({\bw}\cdot{\bn},\tilde{\bu}_1\cdot{\bw})_{\Gamma_2}-({\bw}\cdot\nabla\tilde{\bu}_1,{\bw})_\Omega  \le \mathcal{N}\|\tilde{\bu}_1\|_{\bV}\|{\bw}\|_{\bV}^2 \le \frac{2}{\nu}\mathcal{N}\|\Phi\|_{\bV'}\|{\bw}\|_{\bV}^2.
\end{aligned}
\label{RHS}
\end{align}
Meanwhile, the left-hand side may be estimated from below by utilizing Lemma \ref{gprop} with $\gamma=\nu/2$, and this yields - together with \eqref{RHS} -
\begin{align*}
	(\nu^2 - 4\mathcal{N}\|\Phi\|_{\bV'})\|{\bw}\|_{\bV}^2 \le 0.
\end{align*}
Therefore, from the assumption \eqref{est:uniqueness}, ${\bw} = 0$. 
\end{proof}

	Another important property that have been referred to several times is the isomorphism of $\mathbb E'({\bu},p)\in \mathcal{L}(X,X^*)$ where $(\tilde{\bu},p)\in X$ is the unique solution of \eqref{strong_nontranslated}.  By utilizing De Rham's Lemma, we know that this property is equivalent to showing that given $\mathcal{F}\in \bH^{-1}(\Omega)$, the following equation is well-posed
\begin{align}
	\mathbb{A}'(\delta{\bu};\delta{\bu},{\bphi}) - \frac{1}{2}\mathbb{C}'(\delta{\bu};\delta{\bu},{\bphi}) = \langle\mathcal{F},{\bphi} \rangle_{\bV'\times\bV}, \quad\forall{\bphi}\in\bV
	\label{weak:frechet}
\end{align}
	where 
\begin{align*}
	\mathbb{A}'(\delta{\bu};\delta{\bu},{\bphi}) & = \nu a(\delta{\bu},{\bphi})_\Omega + b(\delta{\bu};{\bu},{\bphi})_\Omega +  b({\bu};\delta{\bu},{\bphi})_\Omega, \\
	\mathbb{C}'(\delta{\bu};\delta{\bu},{\bphi}) & = (\delta{\bu}\cdot{\bn},{\bu}\cdot{\bphi})_{\Gamma_{\rm out}} + ({\bu}\cdot{\bn},\delta{\bu}\cdot{\bphi})_{\Gamma_{\rm out}},
\end{align*}
	and $\bu = \tilde{\bu}+{\bg}.$
	
	\begin{prpstn}
		Suppose that the assumptions in Proposition \ref{prop:unique} hold. Then the unique solution $\delta{\bu}\in \bV$  to \eqref{weak:frechet} exists.
	\end{prpstn}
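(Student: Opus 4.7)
The plan is to view the problem as a linear variational problem on $\bV$ for the bilinear form
\[
L(\delta\bu,\bphi) := \nu\, a(\delta\bu,\bphi)_\Omega + b(\delta\bu;\bu,\bphi)_\Omega + b(\bu;\delta\bu,\bphi)_\Omega - \tfrac{1}{2}(\delta\bu\cdot\bn,\bu\cdot\bphi)_{\Gamma_{\rm out}} - \tfrac{1}{2}(\bu\cdot\bn,\delta\bu\cdot\bphi)_{\Gamma_{\rm out}},
\]
where $\bu=\tilde\bu+\bg$ is fixed, and then invoke the Lax--Milgram lemma. Boundedness of $L$ on $\bV\times\bV$ follows directly from the Cauchy--Schwarz inequality together with the Sobolev embeddings $H^1(\Omega)\hookrightarrow L^4(\Omega)$ and $H^1(\Omega)\hookrightarrow L^4(\partial\Omega)$ (as used in the proof of Proposition \ref{Phicont}), with operator norm controlled by $\|\bu\|_{\bV}\le \|\tilde\bu\|_{\bV}+\|\bg\|_{H^1(\Omega)^2}$. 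Linearity of the right-hand side $\bphi\mapsto\langle\mathcal{F},\bphi\rangle$ is immediate.

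The key step is to establish coercivity. Setting $\bphi=\delta\bu$ and using Lemma \ref{propb}(3) applied to $b(\bu;\delta\bu,\delta\bu)_\Omega$, the term $b(\bu;\delta\bu,\delta\bu)_\Omega$ exactly cancels with $\tfrac12(\bu\cdot\bn,\delta\bu\cdot\delta\bu)_{\Gamma_{\rm out}}$, leaving
\[
L(\delta\bu,\delta\bu)=\nu\|\delta\bu\|_{\bV}^{2}+\bigl[b(\delta\bu;\tilde\bu,\delta\bu)_\Omega-\tfrac12(\delta\bu\cdot\bn,\tilde\bu\cdot\delta\bu)_{\Gamma_{\rm out}}\bigr]+\bigl[b(\delta\bu;\bg,\delta\bu)_\Omega-\tfrac12(\delta\bu\cdot\bn,\bg\cdot\delta\bu)_{\Gamma_{\rm out}}\bigr],
\]
after splitting $\bu=\tilde\bu+\bg$. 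The $\bg$-bracket is handled by the remark following Lemma \ref{midg} with $\gamma=\nu/4$, yielding the bound $\tfrac{\nu}{4}\|\delta\bu\|_{\bV}^2$. The $\tilde\bu$-bracket is bounded, by definition of the constant $\mathcal{N}$ introduced in Proposition \ref{prop:unique}, by $\mathcal{N}\|\tilde\bu\|_{\bV}\|\delta\bu\|_{\bV}^2$, and the energy estimate $\|\tilde\bu\|_{\bV}\le (2/\nu)\|\Phi\|_{\bV^*}$ gives the bound $(2\mathcal{N}/\nu)\|\Phi\|_{\bV^*}\|\delta\bu\|_{\bV}^2$. Combining,
\[
L(\delta\bu,\delta\bu)\ge \Bigl(\tfrac{3\nu}{4}-\tfrac{2\mathcal{N}}{\nu}\|\Phi\|_{\bV^*}\Bigr)\|\delta\bu\|_{\bV}^2,
\]
and the uniqueness hypothesis \eqref{est:uniqueness}, namely $4\nu^2>\|\Phi\|_{\bV^*}\mathcal{N}$, ensures the prefactor is strictly positive (it can easily be tightened if needed by picking $\gamma$ slightly larger than $\nu/4$ in Lemma \ref{midg}).

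With continuity and coercivity established and $\mathcal{F}\in \bH^{-1}(\Omega)\hookrightarrow\bV^*$, the Lax--Milgram lemma yields a unique $\delta\bu\in\bV$ solving \eqref{weak:frechet}. The main obstacle, as anticipated, is the control of the $\tilde\bu$-nonlinearity on $\Gamma_{\rm out}$; this is exactly why the uniqueness smallness condition \eqref{est:uniqueness} resurfaces here -- it is the same mechanism that forces uniqueness of the Navier--Stokes solution itself, and it is what makes $\mathbb{E}'(\tilde\bu,p)$ an isomorphism, thereby validating the Newton scheme used in Section~\ref{sect5} and assumption \textit{(A3)} in the rearrangement method.
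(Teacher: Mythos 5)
Your proposal is correct and follows essentially the same route as the paper: the heart of both arguments is the identical coercivity estimate obtained by cancelling $b(\bu;\delta\bu,\delta\bu)_\Omega$ against the boundary term via Lemma \ref{propb}(3), splitting $\bu=\tilde\bu+\bg$, absorbing the $\bg$-part through Lemma \ref{midg} and the $\tilde\bu$-part through $\mathcal{N}$ and the energy estimate, and invoking \eqref{est:uniqueness}. Concluding existence by Lax--Milgram (rather than the paper's appeal to the Galerkin arguments of Theorem \ref{th:wp}) and choosing $\gamma=\nu/4$ instead of $\nu/2$ are only cosmetic differences, and your constant bookkeeping is no looser than the paper's own use of \eqref{est:uniqueness}.
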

	\begin{proof}
		The proof of the proposition follows the same arguments as that of Theorem \ref{th:wp}, except for showing the coercivity of the left hand side. For that reason, we shall only show here the aforementioned coercivity. 
		
		For any $\delta{\bu}\in\bV$,  Lemma \ref{propb}(3) implies that 
		\begin{align}
		\begin{aligned}
			\mathbb{A}'(\delta{\bu};\delta{\bu},\delta{\bu}) - \frac{1}{2}\mathbb{C}'(\delta{\bu};\delta{\bu},\delta{\bu})= \nu\|\delta{\bu} \|_{\bV}^2	 + b(\delta{\bu};{\bu},\delta{\bu})_\Omega - \frac{1}{2}(\delta{\bu}\cdot{\bn},{\bu}\cdot\delta{\bu})_{\Gamma_{\rm out}}.
		\end{aligned}
		\label{estdummy1}
		\end{align}
		From Lemma \ref{midg},  by using the quantity $\mathcal{N}$,  and since $\|\tilde{\bu}\|_{\bV}\le \frac{2}{\nu}\|\Phi\|_{\bV'}$ we get the following absolute estimate
		\begin{align*}
			|b(\delta{\bu};{\bu},\delta{\bu})_\Omega - \frac{1}{2}(\delta{\bu}\cdot{\bn},{\bu}\cdot\delta{\bu})_{\Gamma_{\rm out}} | 	\le &\, \mathcal{N}\|\tilde{\bu}\|_{\bV}\|\delta{\bu}\|_{\bV}^2 + \frac{\nu}{2}\|\delta{\bu}\|_{\bV}^2\\
			\le&\, \left(\frac{2\mathcal{N}\|\Phi\|_{\bV'}}{\nu} + \frac{\nu}{2}\right)\|\delta{\bu}\|_{\bV}^2
		\end{align*}
	Using the above estimate to \eqref{estdummy1} yields
	\begin{align*}
		\mathbb{A}'(\delta{\bu};\delta{\bu},\delta{\bu}) - \frac{1}{2}\mathbb{C}'(\delta{\bu};\delta{\bu},\delta{\bu}) & \ge \nu\|\delta{\bu} \|_{\bV}^2 - \left(\frac{2\mathcal{N}\|\Phi\|_{\bV'}}{\nu} + \frac{\nu}{2}\right)\|\delta{\bu}\|_{\bV}^2\\ & = \frac{\nu^2 -4\mathcal{N}\|\Phi\|_{\bV'}}{2\nu}\|\delta{\bu}\|_{\bV}^2.
	\end{align*}
	Lastly, the uniqueness assumption \eqref{est:uniqueness} implies that $\nu^2 -4\mathcal{N}\|\Phi\|_{\bV'} >0$ and therefore yields the coercivity. 
	\end{proof}


\end{document}